\title[Snowflakes of Carnot groups into Euclidean space]{Embedding snowflakes of Carnot groups into bounded dimensional Euclidean spaces with optimal distortion}
\author[Seung-Yeon Ryoo]{Seung-Yeon Ryoo}
\address[Seung-Yeon Ryoo]{\newline Mathematics Department, Princeton University, 
\newline Princeton, New Jersey 08544-1000, United States} \email{sryoo@math.princeton.edu}
\newtheorem{theorem}{Theorem}[section]
\newtheorem{lemma}[theorem]{Lemma}
\newtheorem{corollary}[theorem]{Corollary}
\newtheorem{proposition}[theorem]{Proposition}
\newtheorem{remark}[theorem]{Remark}
\newtheorem{question}[theorem]{Question}
\newcommand{\bbr}{\mathbb R}
\newcommand{\bbh}{\mathbb H}
\newcommand{\bbs}{\mathbb S}
\newcommand{\bbz}{\mathbb Z}
\newcommand{\bbn} {\mathbb N}
\begin{document}

\date{\today}

\subjclass{30L05} \keywords{Carnot group, snowflake embedding, Nash--Moser iteration, Lov\'{a}sz local lemma, concentration of measure}

\thanks{Acknowledgment: This work will be part of a doctoral dissertation under the supervision of Professor Assaf Naor at Princeton University. I thank him for suggesting the problem of establishing the results of \cite{tao2021embedding} in the setting of general Carnot groups, and for helpful discussions. This work was partially supported by the Korea Foundation for Advanced Studies.}

\begin{abstract}
We show that for any Carnot group $G$ there exists a natural number $D_G$ such that for any $0<\varepsilon<1/2$ the metric space $(G,d_G^{1-\varepsilon})$ admits a bi-Lipschitz embedding into $\mathbb{R}^{D_G}$ with distortion $O_G(\varepsilon^{-1/2})$. We do this by building on the approach of T. Tao (2021), who established the above assertion when $G$ is the Heisenberg group using a new variant of the Nash--Moser iteration scheme combined with a new extension theorem for orthonormal vector fields. Beyond the need to overcome several technical issues that arise in the more general setting of Carnot groups, a key point where our proof departs from that of Tao is in the proof of the orthonormal vector field extension theorem, where we incorporate the Lov\'{a}sz local lemma and the concentration of measure phenomenon on the sphere in place of Tao's use of a quantitative homotopy argument.
\end{abstract}
\maketitle \centerline{\date}


\section{Introduction}
\setcounter{equation}{0}
A map $f:(X,d_X)\to(Y,d_Y)$ between two metric spaces $(X,d_X)$ and $(Y,d_Y)$ is said to have distortion at most $D$ if there exists a constant $C>0$ such that
\[
Cd_X(x_1,x_2)\le d_Y(f(x_1),f(x_2))\le CDd_X(x_1,x_2),\quad \mathrm{for~all~} x_1,x_2\in X.
\]
For $0<\varepsilon<1$, the $(1-\varepsilon)$-snowflake of a metric space $(X,d_X)$ is defined to be the metric space $(X,d_X^{1-\varepsilon})$ (it is clear that $d_X^{1-\varepsilon}$ also defines a metric on $X$).

The metric spaces that we will focus on in this paper are Carnot groups. Following \cite{le2017primer}, a Carnot group is a 5-tuple $(G,\delta_\lambda,\Delta,\|\cdot\|,d_G)$, where:
\begin{itemize}[leftmargin=*]
    \item The Lie group $G$ is a stratified group, i.e., $G$ is a simply connected Lie group whose Lie algebra $\mathfrak{g}$ admits the direct sum decomposition
    \[
    \mathfrak{g}=V_1\oplus V_2\oplus\cdots\oplus V_s,
    \]
    where $V_{s+1}=0$ and $V_{r+1}=[V_1,V_r]$ for $r=1,\cdots,s$.
    \item For each $\lambda\in \bbr^+$, the linear map $\delta_\lambda:\mathfrak{g}\to \mathfrak{g}$ is defined by
    \[
    \left.\delta_\lambda\right|_{V_r}=\lambda^i \mathrm{id}_{V_r},\quad r=1,\cdots,s.
    \]
    \item The bundle $\Delta$ over $G$ is the extension of $V_1$ to a left-invariant subbundle:
    \[
    \Delta_p\coloneqq (dL_p)_e V_1,\quad p\in G.
    \]
    \item The norm $\|\cdot\|$ is initially defined on $V_1$, and is then extended to $\Delta$ as a left-invariant norm:
    \[
    \|(dL_p)_e(v)\|\coloneqq \|v\|,\quad p\in G,~v\in V_1.
    \]
    \item The metric $d_G$ on $G$ is the \textit{Carnot-Carath\'eodory distance} associated to $\Delta$ and $\|\cdot\|$, i.e.,
    \[
    d_G(p,q)\coloneqq \inf\left\{\int_0^1 \|\dot{\gamma}(t)\|dt : \gamma\in C_{\mathrm{pw}}^\infty ([0,1];G),\gamma(0)=p,\gamma(1)=q,\dot{\gamma}\in \Delta \right\},\quad p,q\in G,
    \]
    where $C_{\mathrm{pw}}^\infty ([0,1];G)$ consists of the piecewise smooth functions from $[0,1]$ to $G$.
\end{itemize}

One of the simplest examples of a noncommutative Carnot group is the Heisenberg group $\bbh^3$. It was shown in \cite{tao2021embedding} that, for $0<\varepsilon<1/2$, one can embed the snowflake $(\bbh^{3},d_{\bbh^{3}}^{1-\varepsilon})$ into a bounded dimensional Euclidean space with optimal distortion $O(\varepsilon^{-1/2})$ (more precisely, Theorem \ref{mainthm} below for the case $G=\bbh^3$ was proven in \cite{tao2021embedding}).\footnote{Following widespread convention, $A\lesssim B$, $A=O(B)$, and $B=\Omega(A)$ mean $A\le C B$ for a universal constant $C$, and $A\asymp B$ means $(A\lesssim B) \wedge (B\lesssim A)$. If the constant $C$ depends on other parameters, this is denoted using subscripts, e.g., $A\lesssim_{C_0,N_0}B$, $A=O_{C_0,N_0}(B)$, and $B=\Omega_{C_0,N_0}(A)$ mean $A\le C(C_0,N_0)B$ where $C(C_0,N_0)$ depends only on $C_0$ and $N_0$, and $A\asymp_{C_0}B$ means $A\lesssim_{C_0}B$ and $B\lesssim_{C_0}A$.} The goal of this paper is to show that the methods of \cite{tao2021embedding} extend\footnote{When comparing \cite{tao2021embedding} and this paper, one should keep in mind that the vectorfields and metrics of \cite{tao2021embedding} are right-invariant, whereas this paper uses left-invariant vectorfields and metrics.} to the more general setting of Carnot groups.

\begin{theorem}\label{mainthm}
For each Carnot group $G$, there exists a natural number $D_G$ such that for every $0<\varepsilon<1/2$ there exists an embedding of $(G,d_G^{1-\varepsilon})$ into $\bbr^{D_G}$ with distortion $O_G(\varepsilon^{-1/2})$.
\end{theorem}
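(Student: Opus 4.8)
The plan is to follow the Nash--Moser scheme of Tao \cite{tao2018embedding}, constructing the embedding as an infinite sum of correction maps at successively finer scales, and then to isolate the single place---the orthonormal vector field extension problem---where the general Carnot setting forces a new argument. First I would reduce matters to a local statement: by left-invariance and the scaling automorphisms $\delta_\lambda$, it suffices to build a good embedding of a fixed bounded neighborhood of the identity, say the unit ball $B=B_{d_G}(e,1)$, with the distortion of the snowflake metric $d_G^{1-\varepsilon}$ controlled by $O_G(\varepsilon^{-1/2})$; the global embedding is then assembled by tiling $G$ with $\delta_\lambda$-rescaled copies and gluing, using that $(G,d_G^{1-\varepsilon})$ is doubling with constant $O_G(1)$ uniformly in $\varepsilon$. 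The target dimension $D_G$ will be a fixed function of the step $s$, the dimension of $\mathfrak g$, and the rank of $V_1$; it will emerge from the dimension count in the extension theorem (one needs enough room to carry the horizontal frame plus the lower-order ``Moser'' corrections).

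Next I would set up the iteration. Write $d_G^{1-\varepsilon}=d_G \cdot d_G^{-\varepsilon}$ and note that on dyadic annuli $d_G^{-\varepsilon}\asymp 2^{\varepsilon k}$, so the snowflake behaves, scale by scale, like a slowly growing reweighting of $d_G$. At scale $2^{-k}$ one produces a map $f_k$ that is approximately an isometry from $(B,2^{\varepsilon}\text{-rescaled }d_G)$ into $\bbr^{D_G}$ up to errors of relative size $O(2^{-\varepsilon})$ in a suitable $C^{1}$-type norm adapted to the horizontal gradient $\nabla_{\!H}$; summing $f=\sum_k (f_{k+1}-f_k)$ with geometrically small increments yields a bi-Lipschitz embedding of the snowflake, and tracking the constants gives the $\varepsilon^{-1/2}$ dependence exactly as in \cite{tao2018embedding} (the square root is the usual Nash--Moser loss: one gains a factor $2^{-\varepsilon}$ per step but pays $\varepsilon^{-1/2}$ in summing the quadratic error terms). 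The passage from $f_k$ to $f_{k+1}$ requires, given the current horizontal ``velocity'' frame, to correct the induced metric; this is where one must extend a given partial orthonormal system of horizontal vector fields on a coarse net to a full orthonormal frame at the next scale.

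The heart of the matter, and the main obstacle, is precisely this orthonormal vector field extension theorem in the general Carnot group. In the Heisenberg case Tao used a quantitative homotopy / obstruction-theoretic argument exploiting the very explicit low-dimensional structure; for a general Carnot group the relevant configuration spaces (Stiefel manifolds of horizontal frames, twisted by the nonabelian group law and the grading $V_1\oplus\cdots\oplus V_s$) are higher-dimensional and lack such explicit descriptions. The plan, as flagged in the abstract, is to replace the homotopy argument by a probabilistic selection: cover the domain by boundedly overlapping patches at the given scale, on each patch choose the extending frame at random from the Haar measure on the appropriate Stiefel manifold, and use concentration of measure on the sphere to guarantee that with overwhelming probability each local choice is simultaneously (i) close to the prescribed boundary frame and (ii) close to orthonormal after the nonlinear group-law distortion; then invoke the Lov\'asz local lemma---each bad event depending only on boundedly many neighboring patches---to conclude that a globally consistent choice exists. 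One must check that the ``bad event'' probabilities beat the local-lemma threshold, which forces $D_G$ to be taken large enough (a fixed multiple of $\dim\mathfrak g$), and that the resulting frame has the Sobolev/$C^1$ regularity in $\nabla_{\!H}$ demanded by the Moser step; a mollification at scale $2^{-k}$ handles the latter. The remaining work---verifying the interpolation inequalities for $\nabla_{\!H}$ on Carnot groups, the commutator estimates coming from the bracket-generating condition, and the bookkeeping that the errors introduced by the grading $V_2,\dots,V_s$ are of strictly lower order at each scale---is technical but follows the template of \cite{tao2018embedding} once the extension theorem is in hand.
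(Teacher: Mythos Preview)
Your proposal has the right headline architecture---a Weierstrass-type sum of scale-$A^m$ maps, with the orthonormal extension step replaced by a Lov\'asz local lemma plus spherical concentration argument---but several of the load-bearing mechanisms are misidentified or missing.

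\textbf{Where the $\varepsilon^{-1/2}$ actually comes from.} You attribute it to ``the usual Nash--Moser loss \ldots\ summing the quadratic error terms.'' That is not the mechanism. The distortion bound comes from the orthogonality $B(\phi_m,\sum_{m'>m}A^{-\varepsilon(m'-m)}\phi_{m'})=0$ (where $B(\phi,\psi)=\mathrm{Sym}(X_i\phi\cdot X_j\psi)$), which via Pythagoras gives $|X_i\Psi|^2=\sum_m A^{-2\varepsilon m}|X_i\phi_m|^2\asymp (1-A^{-2\varepsilon})^{-1}\asymp \varepsilon^{-1}$. The Nash--Moser scheme is the \emph{tool} used to solve $B(\phi,\psi)=0$ without losing derivatives (so the iteration can close), not the source of the $\varepsilon^{-1/2}$. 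If you only track ``errors of relative size $O(2^{-\varepsilon})$'' without the exact orthogonality, you recover at best Assouad's $O(\varepsilon^{-1})$.

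\textbf{What is being extended.} The ``orthonormal vector fields'' are not horizontal vector fields on $G$; they are maps $v_i:G\to\bbs^{D-1}$ into the \emph{target} Euclidean space, obtained by Gram--Schmidt from $X_iP_{(\le N_0)}\psi$, $X_iX_jP_{(\le N_0)}\psi$, $X_{2,i'}P_{(\le N_0)}\psi$. The extension problem (Theorem~\ref{Lip-lifting}/\ref{Cj-lifting}) is: given a pointwise-orthonormal system of such $\bbr^D$-valued maps with controlled $C^{m^*}$ norm, produce one more. The probabilistic step chooses, at each point of a $\delta$-net in $G$, a random unit vector in the orthogonal complement inside $\bbr^D$; concentration makes nearby choices nearly orthogonal, the local lemma makes them simultaneously so, and interpolation plus Gram--Schmidt yields the smooth section. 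Your description (``horizontal frames,'' ``Stiefel manifolds twisted by the group law'') conflates domain and target.

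\textbf{The H\"older lower bound.} You do not address this at all, and it is not automatic. The paper carries, through the entire iteration, a quantitative \emph{freeness} property---a uniform lower bound on $\big|\bigwedge_W W\psi\big|$ over all ordered differential monomials $W=X_{r_1,j_1}\cdots X_{r_m,j_m}$ of length $\le s$---and uses a Taylor expansion to order $s$ (hence the $C^{s(s+1)}$ regularity requirement) to extract $|\Psi(p)-\Psi(0)|\gtrsim d_G(p,0)^{1-\varepsilon}$ at the relevant scale. Preserving freeness when passing from $\psi$ to $\psi+\phi$ without degrading constants is delicate (Lemma~\ref{KeyIteration} sets up a hierarchy where the $i$-fold freeness of $\psi+\phi$ is derived from the $(i-1)$-fold freeness of $\psi$); your proposal has no analogue of this, so as written it would not produce a bi-Lipschitz lower bound.

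Finally, the reduction to a unit ball followed by tiling is not how the paper proceeds and would reintroduce the very difficulty the iteration is designed to avoid; the construction is global on $G$ from the start, and the full range of scales is covered at the end by concatenating finitely many dilates $\Phi_m=2^{(m-1)(1-\varepsilon)}\Phi_1\circ\delta_{2^{-m+1}}$.
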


We have not attempted to optimize $D_G$, but from the analysis in this paper it will be clear that $D_G$ is at least $\Omega(23^{n_h})$, where $n_h$ is the Hausdorff dimension of $G$, i.e., $n_h=\sum_{r=1}^s  r \dim V_r$.

We will not work in the large epsilon regime $\frac{1}{2}\le \varepsilon <1$ because Theorem \ref{mainthm} is simply not true in this case; see \cite{assouad1983plongements} for the notion of metric dimension which tells us why such a result is impossible. Furthermore, we will assume $0<\varepsilon<\frac 1A$, where $A$ is a very large number. For $\frac 1A\le \varepsilon<\frac{1}{2}$, a construction of \cite{assouad1983plongements} gives such an embedding; we are thus only interested in the small epsilon regime.

One standard consequence of the above theorem, which was also observed in \cite{tao2021embedding}, is the following corollary.
\begin{corollary}
Let $G$ be a Carnot group, and suppose $\Gamma\subset G$ is a discrete subgroup of $G$, where for any two distinct points $\gamma_1,\gamma_2\in \Gamma$ one has $d_G(\gamma_1,\gamma_2)\ge 1$. Let $D_G$ be as in Theorem \ref{mainthm}, and for $R\ge 2$ define the discrete ball $B_{\Gamma}(0,R)\coloneqq\{\gamma\in \Gamma:d_G(0,\gamma)<R\}$. Then there exists an embedding of the discrete ball $B_{\Gamma}(0,R)$ with the induced metric $d_G$ into $\bbr^{D_G}$ of distortion $O_G(\log ^{1/2}R)$.
\end{corollary}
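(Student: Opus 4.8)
The plan is to deduce the corollary from Theorem \ref{mainthm} in the standard way, by choosing the snowflake parameter $\varepsilon$ as a function of the scale $R$ and exploiting the elementary fact that snowflaking barely changes a metric whose values are confined to a bounded multiplicative range. Assume first that $R$ is large enough that $\varepsilon \coloneqq 1/\log R$ lies in the admissible range $(0,1/A)$, i.e.\ $R > e^A$; the remaining bounded range of $R$ is dealt with at the end. Let $f\colon (G, d_G^{1-\varepsilon}) \to \bbr^{D_G}$ be the embedding produced by Theorem \ref{mainthm}, so that for some constant $c=c_G>0$,
\[
c\, d_G(x,y)^{1-\varepsilon} \le \|f(x)-f(y)\| \le c\,O_G(\varepsilon^{-1/2})\, d_G(x,y)^{1-\varepsilon} \qquad \forall\, x,y\in G,
\]
and take the embedding of $B_\Gamma(0,R)$ to be the restriction $f|_{B_\Gamma(0,R)}$.

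The key point is that $d_G$ takes values in a controlled range on the discrete ball. If $x,y\in B_\Gamma(0,R)$ are distinct, then the $1$-separation hypothesis on $\Gamma$ gives $d_G(x,y)\ge 1$, while the triangle inequality gives $d_G(x,y)\le d_G(x,0)+d_G(0,y)<2R$. Writing $d_G(x,y)^{1-\varepsilon}=d_G(x,y)\cdot d_G(x,y)^{-\varepsilon}$ and using $1\le d_G(x,y)<2R$ together with $\log(2R)\le 2\log R$ (valid since $R\ge 2$), we obtain
\[
e^{-2}\, d_G(x,y)\le (2R)^{-\varepsilon}\, d_G(x,y)\le d_G(x,y)^{1-\varepsilon}\le d_G(x,y),\qquad \forall\, x,y\in B_\Gamma(0,R).
\]
Thus $d_G^{1-\varepsilon}$ and $d_G$ are bi-Lipschitz equivalent on $B_\Gamma(0,R)$ with a universal constant, and combining this with the distortion bound for $f$ shows that $f|_{B_\Gamma(0,R)}$ is an embedding of $(B_\Gamma(0,R),d_G)$ into $\bbr^{D_G}$ of distortion $O_G(\varepsilon^{-1/2})=O_G(\log^{1/2}R)$.

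For the remaining range $2\le R\le e^A$ one has $\log^{1/2}R\asymp_G 1$, so it suffices to exhibit an embedding of $(B_\Gamma(0,R),d_G)$ into $\bbr^{D_G}$ of distortion $O_G(1)$; this is obtained by rerunning the argument with the fixed admissible value $\varepsilon\coloneqq 1/(2A)$. Indeed, on $B_\Gamma(0,R)$ one again has $1\le d_G(x,y)<2R\le 2e^A$, so $d_G^{1-\varepsilon}$ and $d_G$ are bi-Lipschitz equivalent there with a constant depending only on $A$, and Theorem \ref{mainthm} supplies $f$ of distortion $O_G(A^{1/2})=O_G(1)$, whence the restriction has distortion $O_G(1)$. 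The only point in this argument that requires any care is the matching of the two regimes of $R$ against the admissibility constraint $\varepsilon<1/A$; everything else is the elementary observation recorded above, and there is no substantive obstacle.
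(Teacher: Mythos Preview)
Your proof is correct and follows exactly the approach the paper indicates: choose $\varepsilon=1/\log R$ and use that $d_G$ and $d_G^{1-\varepsilon}$ are comparable on $B_\Gamma(0,R)$ since distances there lie in $[1,2R)$. The only minor remark is that your case split around $\varepsilon<1/A$ is unnecessary, since Theorem~\ref{mainthm} as stated applies to all $\varepsilon\in(0,1/2)$; but this does no harm.
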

This follows from Theorem \ref{mainthm} because on $B_\Gamma(0,R)$ with $R\ge 2$, the metric $d_G$ is comparable to $d_G^{1-1/\log R}$.

Some history behind Theorem \ref{mainthm}: Carnot groups are special cases of doubling metric spaces, where a metric space $(X,d_X)$ is said to be $K$-doubling for some natural number $K$ if for any $x\in X$ and $R>0$ there exist points $y_1,\cdots,y_K\in X$ such that
\[
B_R(x)\subset \bigcup_{j=1}^K B_{R/2}(y_j),
\]
where $B_R(x)\coloneqq\{z\in X:d_X(x,z)<R\}$ is the ball of radius $R$ centered at $x$; we say that $(X,d_X)$ is doubling if $(X,d_X)$ is $K$-doubling for some natural number $K$. In a seminal paper, Assouad \cite{assouad1983plongements} showed that for $0<\varepsilon<\frac 12$, the $(1-\varepsilon)$-snowflake of a $K$-doubling metric space admits an embedding into $\ell_2^{O_K(\varepsilon^{-O(1)})}$ with distortion $O_K(\varepsilon^{-1/2})$. Here, the distortion $O(\varepsilon^{-1/2})$ is sharp for the Heisenberg group $\bbh^3$; see \cite[Section 4]{naor2010assouad} for a proof of this fact. There is a lot of relevant literature on how to sharpen Assouad's theorem in different ways, with or without various assumptions; see, for example, Abraham, Bartal, and Neiman \cite{abraham2008embedding}, Bartal, Recht, and Schulman \cite{bartal2011dimensionality}, Gottlieb and Krauthgamer \cite{gottlieb2015nonlinear}, and Gupta, Krauthgamer, and Lee \cite{gupta2003bounded}, but we do not mean this list to be extensive. The direction that is quite relevant to this paper is that of \cite{naor2010assouad}, which showed that one can construct snowflake embeddings of any doubling metric space into bounded dimensional Euclidean spaces at the cost of slightly worsening the distortion: more precisely, one can embed the $(1-\varepsilon)$-snowflake of a $K$-doubling space into $\ell_2^{O(\log K)}$ with distortion $O(\frac{\log^2 K}{\varepsilon^2})$, or more generally, for any $0<\delta\le 1$, into $\ell_2^{O(\frac{\log K}{\delta})}$ with distortion $O((\frac{\log K}{\varepsilon})^{1+\delta})$. One may then further inquire whether we can take the target dimension to be uniformly bounded in $0<\varepsilon<\frac 12$ while simultaneously attaining the best possible distortion $O(\varepsilon^{-1/2})$. We state this separately as a question.
\begin{question}[Assouad's theorem with optimal distortion and bounded dimension]\label{weakconj}
For any natural number $K\ge 2$, does there exist a natural number $D(K)$ such that if $(X,d_X)$ is a $K$-doubling metric space and $0<\varepsilon<\frac 12$, then there exists an embedding of the snowflake $(X,d_X^{1-\varepsilon})$ into $\bbr^{D(K)}$ with distortion $O_K(\varepsilon^{-1/2})$?
\end{question}
For completeness, we state a stronger version of the above question, which is motivated by the fact that we must have the lower bound $D(K)\gtrsim \log K$.
\begin{question}[Assouad's theorem with optimal distortion and optimal dimension]\label{strongconj}
For any natural number $K\ge 2$, does there exist a natural number $D(K)=O(\log K)$ such that if $(X,d_X)$ is a $K$-doubling metric space and $0<\varepsilon<\frac 12$, then there exists an embedding of the snowflake $(X,d_X^{1-\varepsilon})$ into $\bbr^{D(K)}$ with distortion $O_K(\varepsilon^{-1/2})$?
\end{question}
These questions are relevant to finding counterexamples to a question raised in \cite{lang2001bilipschitz}, which, in our notation, can be stated as follows.
\begin{question}[Lang--Plaut problem \cite{lang2001bilipschitz}]\label{Lang-Plaut}
For any natural number $K\ge 2$, does there exist a natural number $D(K)$ such that if $X$ is a subspace of $\ell_2$ that is $K$-doubling under the metric it inherits from $\ell_2$, then there exists an embedding of $(X,\|\cdot\|_2)$ into $\bbr^{D(K)}$ with $O_K(1)$ distortion?
\end{question}
As observed in \cite{naor2010assouad}, if an embedding as in Question \ref{weakconj} fails to exist for the Heisenberg group $\bbh^3$, then the Lang--Plaut problem would be answered in the negative, since it is known that $\bbh^3$ admits $(1-\varepsilon)$-snowflake embeddings into $\ell_2$ with distortion $O(\varepsilon^{-1/2})$ with the additional property that the doubling constant of the image is uniformly bounded. Nevertheless, it was shown in \cite{tao2021embedding} that an embedding as in Question \ref{weakconj} for $\bbh^3$ exists, i.e., Theorem \ref{mainthm} for $G=\bbh^3$ is true. Thus, the Heisenberg group (or more precisely, the doubling $\ell_2$ images of the snowflakes thereof) fails to serve as a counterexample to the Lang--Plaut problem. 

The following question was posed in \cite{naor2010assouad} to highlight this connection.
\begin{question}\label{snowflake-with-doubling-image}
For any natural number $K\ge 2$, does there exist a natural number $K'(K)$ such that if $(X,d_X)$ is a $K$-doubling metric space and $0<\varepsilon<\frac 12$, then there exists an embedding of the snowflake $(X,d_X^{1-\varepsilon})$ into $\ell_2$ with distortion $O_K(\varepsilon^{-1/2})$ with the additional property that image of $X$ is $K'(K)$-doubling?
\end{question}
Of course, the connection is that if Question \ref{snowflake-with-doubling-image} has a positive answer while Question \ref{weakconj} has a negative answer, then the Lang--Plaut problem (Question \ref{Lang-Plaut}) must have a negative answer.

The main purpose of this paper is to expand upon our partial knowledge of Question \ref{weakconj}, by answering it in the affirmative in the setting of Carnot groups. We begin by carrying the methods of \cite{tao2021embedding} into the setting of Carnot groups, and whenever a tool of that paper becomes inadequate in the setting of Carnot groups, we replace it with a tool more suitable in the general language of doubling metric spaces. More specifically, the key tools of \cite{tao2021embedding} are a new variant of the Nash--Moser iteration scheme and a certain extension theorem for orthonormal vector fields using a quantitative homotopy argument. When generalizing to the case of Carnot groups, one potential source of trouble is that an arbitrary Carnot group might have an arbitrarily large step size $s$, which could complicate its geometric properties and make the tools of \cite{tao2021embedding} fail. It will be shown in Section \ref{sec:NM-perturb} that the Nash--Moser iteration scheme directly generalizes to the setting of Carnot groups, and we prove a orthogonality statement slightly stronger than that of \cite{tao2021embedding}. However, in Section \ref{sec:ON-ext}, it will be clear that there are some obstructions to the quantitative homotopy argument. Nevertheless, we will prove the orthonormal vector field extension theorem even for general doubling metric spaces, using the concentration of measure phenomenon for the sphere and the Lov\'{a}sz Local Lemma. 

Ultimately, we would like to answer Question \ref{weakconj} for general doubling metric spaces; this paper is an intermediate step in such an investigation, showing that it is at least true for Carnot groups. We plan to address the case of general doubling metric spaces in future work, by possibly adapting some of the proof methods of this paper. In particular, the orthonormal vector field extension theorem for doubling metric spaces (Theorem \ref{Lip-lifting}) seems to be a good starting point for future work, and we would have to find either a metric analog or a replacement for the Nash--Moser iteration scheme.

We now briefly overview some of the results and strategies of \cite{assouad1983plongements,naor2010assouad,tao2021embedding} for constructing snowflake embeddings, and describe how these ideas connect to the proof strategy of this paper.

The starting point of constructing snowflake embeddings is the classical fact that if $X$ is a metric space, $A>1$, $0<\varepsilon<\frac 12$, and $\{\phi_m:X\to \ell_2\}_{m\in \bbz}$ is a collection of maps such that $|\phi_m|\le A^m$ and $\phi_m$ is 1-Lipschitz, then the Weierstrass sum
\begin{equation}\label{Weierstrass}
\Phi=\sum_{m\in\bbz}A^{-m\varepsilon}\phi_m
\end{equation}
(which is easily seen to be absolutely convergent, say by requiring $\phi_m(x_0)=0$ for all $m\in \mathbb{Z}$ for some fixed $x_0\in X$) is $(1-\varepsilon)$-H\"older, with the H\"older norm bounded by $O(\varepsilon^{-1})$. If one assumes in addition that the maps $\phi_m$ take values in mutually orthogonal subspaces of $\ell_2$, then we can bound the $(1-\varepsilon)$-H\"older norm by $O(\varepsilon^{-1/2})$. In the case where $X$ is a doubling metric space, Assouad \cite{assouad1983plongements} constructed functions $\phi_m$ with the above properties and with the additional property that if $d(p,q)\asymp A^m$ then $|\phi_m(p)-\phi_m(q)|\gtrsim_{K} A^m$. Then $\Phi$ satisfies the H\"older lower bound $|\Phi(p)-\Phi(q)|\gtrsim_{K} d(p,q)^{1-\varepsilon}$ for any $p,q\in X$, and thus is an embedding of the $(1-\varepsilon)$-snowflake of $X$ into $\ell_2$ with distortion $O_K(\varepsilon^{-1/2})$ (we do not stress dependence on $A$ for now, as Assouad chooses $A$ to be a constant).

When improving upon Assouad's theorem to reduce the target dimension (especially when trying to keep the target dimension independent of $\varepsilon$), one usually keeps the idea of using the Weierstrass sum \eqref{Weierstrass} to guarantee the upper bound but needs to be more clever to enforce the lower bound. Note that for $A^{m-1}\le d(p,q)\le A^m$, the sum $\sum_{n<m}A^{-n\varepsilon}(\phi_n(p)-\phi_n(q))$ is negligible compared to $d(p,q)^{1-\varepsilon}$,\footnote{More precisely, we would have to take $A^{m-1+\delta}\le d(p,q)\le A^{m+\delta}$ for a fixed small $\delta>0$, while taking the length scale $A$ sufficiently large, but for clarity of the introduction we will not address this issue for now. See the proof of Proposition \ref{AlmostLipschitz} for a precise argument. We also remark that the length scale $A$ will be chosen last in our ``hierarchy of constants'' (see subsection \ref{sec:hierarchy}), so $A$ will dominate every other parameter used in this paper.} so in order to enforce the lower bound it is enough to have $|\sum_{n\ge m}A^{-n\varepsilon}(\phi_n(p)-\phi_n(q))|\gtrsim d(p,q)^{1-\varepsilon}$. Thus, it is natural to devise an iterative construction: having constructed maps $\{\phi_n:X\to\bbr^D\}_{n>m}$ such that the partial sum $\sum_{n>m}A^{-n\varepsilon}\phi_n$ is a map that is able to distinguish points which are at least distance $A^{m+1}$ apart, by being a map that ``oscillates'' at scale $A^{m+1}$ and above, we need to devise a component $A^{-m\varepsilon}\phi_m:X\to\bbr^D$ which ``oscillates'' at scale $A^{m}$ and which, when added to $\psi:=\sum_{n>m}A^{-n\varepsilon}\phi_n$, makes the sum $A^{-m\varepsilon}\phi_m+\psi$ able to distinguish points which are at least distance $A^{m}$ apart. Thus, the challenge is that given the map $\psi:X\to\bbr^D$, we need to make use of a limited amount of coordinates in constructing a map $\phi_m$, so that although $\psi$ and $\phi_m$ ``share coordinates'', the sum $A^{-m\varepsilon}\phi_m+\psi$ satisfies the required H\"older lower bound.

In \cite{naor2010assouad}, the maps $\phi_m$ are defined via a probabilistic construction: after they construct random partitions arising from nets (defined in Section \ref{sec:MG}) with good probabilistic padding properties, they define the maps $\phi_m$ using the distance to the boundaries of these partitions, and by a nested use of the Lov\'{a}sz Local Lemma conditioned on the partial sum $\sum_{n>m}A^{-n\varepsilon}\phi_n$, they show that their maps $\phi_m$ have the desired property. This construction gives distortion $O(\varepsilon^{-1-\delta})$, where the $\varepsilon^{-1}$ factor comes from the H\"older constant of \eqref{Weierstrass}, and the additional factor $\varepsilon^{-\delta}$ comes from some technicalities of the construction, namely that one needs to introduce a slightly finer length scale than $A$ when constructing the nets to ensure that the Lov\'{a}sz Local Lemma is applicable and thus the H\"older lower bound is achieved.

The result of \cite{naor2010assouad} was surprising at the time since it was the first to achieve a target dimension that is uniformly bounded in the amount of snowflaking. It is also surprising that they have achieved the theoretically best possible target dimension $\log K$, up to constant factors. However, \cite{naor2010assouad} fell short of giving a final answer to the problem of snowflake embeddings, since the distortion they have achieved was $O(\varepsilon^{-1-\delta})$, not $O(\varepsilon^{-1/2})$. They have asked whether it is even possible, citing that if Question \ref{weakconj} is answered in the negative for $\bbh^3$, then Question \ref{Lang-Plaut} would be answered in the negative. Surprisingly, Tao \cite{tao2021embedding} subsequently answered Question \ref{weakconj} in the affirmative for $\bbh^3$, and thus overturned the above method of answering Question \ref{Lang-Plaut}.

In order to see how one can achieve the optimal distortion $O(\varepsilon^{-1/2})$, let us revisit \eqref{Weierstrass}. When working with a bounded number of dimensions, one needs to work with a notion of orthogonality weaker than requiring the $\phi_m$ to have pairwise orthogonal ranges. In fact, one can see that we do not need to require every pair of the $\phi_m$ to take values in mutually orthogonal spaces; instead, we need only that each $\phi_m$ be orthogonal in some sense to the partial sum $\sum_{n>m}A^{-n\varepsilon}\phi_n$ (recall that we don't need to consider the partial sum $\sum_{n<m}A^{-n\varepsilon}\phi_n$ when considering points $p,q\in X$ with $d(p,q)\asymp A^m$, as the sum $\sum_{n<m}A^{-n\varepsilon}(\phi_n(p)-\phi_n(q))$ is negligible compared to $A^{m(1-\varepsilon)}\asymp d(p,q)^{1-\varepsilon}$ in this case). Following \cite{tao2021embedding}, we will choose the notion of orthogonality to be that the horizontal derivatives are perpendicular: if $\nabla$ denotes the horizontal gradient in the Carnot group $G$ (see subsection \ref{sec:carnotgeom} for the definition of horizontal gradient), then we require the orthogonality $\nabla \phi_m\cdot \nabla \sum_{n>m}A^{-n\varepsilon}\phi_n=0$ (actually we will derive a slightly stronger orthogonality condition; see Section \ref{sec:NM-perturb}). Overall, the $\phi_m$ are constructed in an iterative manner, and this orthogonality is added to one of the conditions that such an iterative construction must achieve.

Note that this orthogonality easily guarantees the H\"older upper bound of $\Phi$, because an iterated use of the Pythagorean theorem gives
\[
\left|\nabla \sum_{n\ge m}A^{-n\varepsilon}\phi_n\right|^2 = \sum_{n\ge m}A^{-2n\varepsilon}|\nabla \phi_n|^2 \lesssim_A \varepsilon^{-1}A^{-2n\varepsilon},
\]
(we will have $|\nabla \phi_n|=O(1)$); to get the H\"older lower bound, one needs to guarantee that the partial sum $\sum_{n\ge m}A^{-n\varepsilon}\phi_n$ ``oscillates'' at scale $A^m$. In general, proving the H\"older lower bound is more complicated than proving the H\"older upper bound, as the geometry of the particular space $X$ under consideration plays a more significant role.

For Carnot groups $G$, we can use Taylor expansions to exploit the local geometry of $G$ in the following way. Fixing a basis $X_{r,1},\cdots, X_{r,k_r}$ of each stratum $V_r$, each point $p\in G$ can be expressed in the coordinates $p=\exp\left(\sum_{r=1}^s\sum_{i=1}^{k_r}x_{r,i}X_{r,i}\right)$, while its distance to the identity is roughly $d_G(p,0)\asymp_G\sum_{r=1}^s\sum_{j=1}^{k_r}|x_{r,j}|^{1/r}$ (the notation will be explained in full detail in subsection \ref{sec:carnotgeom}). Under this coordinate system, any $C^{s(s+1)}$-function $\Psi:G\to\mathbb{R}^D$ can be approximated by its Taylor expansion as
\[
\Psi(p)=\Psi(0)+\sum_{m=1}^s \frac{1}{m!}\left(\sum_{r=1}^s\sum_{j=1}^{k_r}x_{r,j}X_{r,j}\right)^m\Psi(0)+(\mbox{Taylor approx. error})
\]
(the smallness of the Taylor approximation error can come from a bound on the $C^{s(s+1)}$-norm of $\Psi$ and on a smallness assumption on $d_G(p,0)$).
It will be seen in the proof of Proposition \ref{AlmostLipschitz} that we can rearrange the above into the nicer form
\begin{align*}
    \Psi(p)=&\Psi(0)+\sum_{r=1}^s\sum_{j=1}^{k_r}(x_{r,j}+\mbox{coordinate errors})X_{r,j}\Psi(0)\\
        &+ (\mbox{cross derivative terms})+(\mbox{Taylor approx. error}),
\end{align*}
where the ``coordinate errors" and the ``cross derivative terms" can be made small by a smallness assumption on $d_G(p,0)$ and on control of the second and higher order derivatives of $\Psi$. Thus, if we also have quantitative linear independence of the derivatives $X_{r,j}\Psi(0)$ (see subsection \ref{sec:linalg} for the notion of quantitative linear independence; we will later refer to this as a ``freeness property''), then we can deduce
\begin{align*}
    |\Psi(p)-\Psi(0)|\gtrsim &\max_{1\le r\le s, 1\le j\le k_r}|(x_{r,j}+\mbox{coordinate errors})X_{r,j}\Psi(0)|-(\mbox{errors})\\
    \gtrsim &d_G(p,0)^s \min_{1\le r\le s, 1\le j\le k_r}|X_{r,j}\Psi(0)| -(\mbox{errors}).
\end{align*}
We remark that this is one point where our proof becomes more complicated due to the more general Carnot group structure, compared to the setting of $G=\mathbb{H}^3$ in \cite{tao2021embedding}. But this, so far, does not necessitate a fundamental change of techniques.

To deduce the H\"older lower bound for our embedding $\Phi$ at scale $A^0=1$, we will apply the above approximation to $\Psi = \sum_{n\ge 0}A^{-n\varepsilon}\phi_n$, and consider points at a slightly smaller scale, more precisely $d_G(p,0)\asymp A^{-1/(s+1)}$. (This uses the additional fact that we effectively only need a H\"older lower bound for part of the scales, as long as we can cover the entire set of scales by a finite set of rescalings. See \eqref{fullmapping}.) Thus, to enforce the H\"older lower bound, we will have to justify the above approximation and give quantitative bounds on the various errors, namely we will need estimates on the $C^{s(s+1)}$-norm of $\Psi$ and quantitative linear independence of the derivatives $X_{r,j}\Psi(0)$.

Thus, all in all, the iterative step can be stated as follows: upon rescaling so that $m=0$, and defining $\psi=\sum_{n> 0}A^{-n\varepsilon}\phi_n$, the main problem is,  given a function $\psi:G\to \bbr^D$, which ``oscillates'' at scale $A$ (this will be measured by the rescaled $C^m$ norms which will be introduced in subsection \ref{sec:funcspace}), which is a $C^{s(s+1)}$-function, and whose derivatives are quantitatively linearly independent, to construct a function $\phi:G\to \bbr^D$, which ``oscillates'' at scale 1, which is a $C^{s(s+1)}$-function, and  with the orthogonality property $\nabla \phi\cdot \nabla \psi=0$ along with a quantitative linear independence for the derivatives of $\psi+\phi$. The precise statement and proof of the iterative step will be presented in Section \ref{sec:iterationlemma}.

Once we have the iterative step as above, the rest of the inductive construction is fairly easy and is given in Section \ref{sec:applyiteration}. This inductive construction mostly follows the line of \cite{tao2021embedding}, but we have spelled out the details for completeness. In order to begin the iteration, we need a single function $\phi_m$ to begin with, which oscillates at a fixed scale $A^m$, is of class $C^{s(s+1)}$, and whose derivatives are quantitatively linearly independent (we will call this a ``freeness'' property). This is done in Proposition \ref{FirstStep}. It is easy to construct a smooth function with the first two properties, and then we can guarantee the latter freeness property by a Veronese-type embedding often employed in the Nash embedding literature (and also used in \cite{tao2021embedding}). Once we have this single function, we employ the above iterative step a finite number of times to obtain a finite family of mappings $\{\phi_m:G\to\mathbb{R}^D\}_{M_1\le m\le M_2}$ (Proposition \ref{FiniteIteration}), and then use the Arzel\`a-Ascoli theorem to pass to a full family of mappings $\{\phi_m:G\to\mathbb{R}^D\}_{m\in \mathbb{Z}}$ (Theorem \ref{lacunary}). There is a small issue of losing one degree of regularity when using the Arzel\`a-Ascoli theorem, but this can easily be fixed by requiring $C^{s^2+s+1}$-regularity in the first place.

We devote the rest of the introduction to explaining the proof ideas behind the above iterative step.

A na\"ive approach to solving the equation\footnote{In Section \ref{sec:NM-perturb} instead of $\nabla \phi\cdot \nabla \psi=0$ we will solve the stronger equation $X_i\phi\cdot X_j\psi+X_j\phi\cdot X_i\psi=0$ for $i,j=1,\cdots,k$. We are stating the simpler version to keep the Introduction simple.} would be to use the Leibniz rule directly: denoting by $X_1,\cdots, X_k$ a left-invariant vectorfield basis for $V_1$ so that $\nabla = (X_1,\cdots,X_k)$, the Leibniz rule tells us that $X_i\phi \cdot X_i\psi = X_i(\phi\cdot X_i\psi)-\phi\cdot X_iX_i\psi$, so in order to solve
\[
X_i\phi\cdot X_i\psi=0, \quad i=1,\cdots,k,
\]
we could simply solve
\[
\phi\cdot X_i\psi=\phi\cdot X_iX_i\psi=0,\quad  i=1,\cdots,k
\]
(Note that the trivial solution $\phi$=0 is not acceptable, as we need a freeness property of $\psi+\phi$ at scale 1.) This latter equation is at least algebraically well-posed in the sense that the vectors $X_i\psi$ and $X_iX_i\psi$, $i=1,\cdots,k$, are (quantitatively) linearly independent; however it seems inherent that $\phi$ must be in the same regularity class as $X_i\psi$ and $X_iX_i\psi$; i.e., $\phi$ must have two fewer degrees of regularity compared to $\psi$. This ``loss of derivatives problem'' would make the iteration fail.

The solution proposed by \cite{tao2021embedding} to get around this loss of derivatives problem was to introduce Littlewood--Paley projections $P_{(\le N_0)}$ on $\bbh^3$ (see subsection \ref{sec:LP} for Littlewood--Paley projections), and first solve the approximate equation 
\[
X_i\tilde{\phi}\cdot X_iP_{(\le N_0)}\psi=0,\quad i=1,\cdots,k,
\]
by solving
\[
\tilde{\phi}\cdot X_iP_{(\le N_0)}\psi=\tilde{\phi}\cdot X_iX_iP_{(\le N_0)}\psi=0,\quad i=1,\cdots,k.
\]
Because lower order derivatives of $\psi$ can control higher order derivatives of $P_{(\le N_0)}\psi$ (up to losses growing on the order of the Littlewood--Paley frequency $N_0$; see Theorem \ref{LP} for the precise quantitative statement), and because we are looking at a smaller scale $1$ for $\phi$ compared to the larger scale $A$ for $\psi$, it is plausible that one can achieve as much control on $\tilde{\phi}$ as $\psi$, as long as we take $A$ very large compared to $N_0$. More precisely, because $\psi$ oscillates at scale $A$, its homogeneous $\dot{C}^m$-norm will behave like $A^{-m}$, so $\nabla^2 P_{(\le N_0)}\psi$ will have $\dot{C}^m$-norm roughly $A^{-m-2}$ for $m\le s^2+s-2$ and $A^{-s^2-s}N_0^{m- s^2-s+2}$ for $m> s^2+s-2$, which are all $O(1)$ if $A$ is chosen sufficiently larger than $N_0$. This will ensure that any reasonable construction based on $X_iP_{(\le N_0)}\psi$ and $X_iX_iP_{(\le N_0)}\psi$ will produce a function that ``oscillates'' at scale $1$ and is of class $C^{s^2+s+1}$, and thus has the same level of regularity as the function $\tilde{\phi}$ that we want to produce.

We will show in subsection \ref{sec:LP} that Carnot groups admit Littlewood--Paley projections that share the same properties as those used in \cite{tao2021embedding}.

Thus, the main idea of \cite{tao2021embedding} to solve the equation $\nabla \phi\cdot \nabla \psi =0$ is to decompose it into two steps. In the first step, we need to solve the low-frequency version of the equation $\nabla \tilde{\phi}\cdot \nabla P_{(\le N_0)}\psi =0$ using the Leibniz rule, i.e., we solve the equation 
\[
\tilde{\phi}\cdot X_iP_{(\le N_0)}\psi=0, ~\tilde{\phi}\cdot X_iX_iP_{(\le N_0)}\psi=0,\quad i=1,\cdots,k,
\]
while guaranteeing that $\tilde{\phi}$ has the same regularity as $XP_{(\le N_0)}\psi$ and $XXP_{(\le N_0)}\psi$.
In the second step, we would assume the low-frequency solution $\tilde{\phi}$ as given, and then we would need to correct $\tilde{\phi}$ into a ``true'' solution $\phi$ while guaranteeing that $\phi$ has the same regularity as $\tilde{\phi}$.

The first step, namely solving the approximate equation $\tilde{\phi}\cdot X_iP_{(\le N_0)}\psi=\tilde{\phi}\cdot X_iX_iP_{(\le N_0)}\psi=0$, with $\tilde{\phi}$ having the same regularity as $X_iP_{(\le N_0)}\psi$ and $X_iX_iP_{(\le N_0)}\psi$, is the part where our proof in the setting of general Carnot groups departs the most from the setting of $\mathbb{H}^3$ in \cite{tao2021embedding}, and is the part where the main novelty of this paper lies in. For $\bbh^3$, \cite{tao2021embedding} resolves this issue by first observing that $\bbh^3$ has a cocompact lattice and a CW structure that is compatible with it, and then provides the extension using quantitative null homotopy on each of the cells of that CW structure (note that, although the null homotopy is used on an infinite number of cells, the final construction is globally controlled, because up to left-translation there is only a finite number of distinct cells). This argument may not work for a general Carnot group $G$, as $G$ might not admit a cocompact lattice (say if the structure constants for every basis of $G$ were irrational). Instead, we give a proof, using the concentration of measure phenomenon on the sphere and the Lov\'{a}sz Local Lemma, that is independent of the topology of the space under consideration, by using only the fact that $G$ is a doubling metric space (see Section \ref{sec:ON-ext} for details). We essentially do not need the differential structure of $G$ because the uniform continuity of $\tilde{\phi}$ is the main hurdle here; we can automatically gain higher regularity by convolving with a mollifier and using the Gram-Schmidt process.

To put our solution to the approximate equation into context, we will pose, in Section \ref{sec:ON-ext}, a more general question (Question \ref{general-lift}), which asks whether one can extend a given orthonormal system of vectorfields within the same regularity class. It will become clear that our solution provides a partial positive answer to Question \ref{general-lift}, at least when the base space is a doubling metric space, the regularity class is contained in the Lipschitz class, the regularity class is closed under simple algebraic operations, and if smoothening a Lipschitz vectorfield by convolving it with some scalar mollifier provides the resulting vectorfield with the desired regularity (see Theorem \ref{partialpositiveanswer} for details).

For the second step, once we have the approximate solution $\tilde{\phi}$, we need to correct it into a true solution $\phi$ while preserving the regularity. Tao \cite{tao2021embedding} solved this by developing a perturbative theory for the bilinear form $\nabla \phi\cdot\nabla \psi$. More precisely, \cite{tao2021embedding} develops a version of the Nash--Moser iteration scheme to show how one can correct $\tilde{\phi}$ by small amounts into a true solution $\phi$ to $\nabla \phi\cdot\nabla \psi=0$, without losing any regularity (technically, the Nash--Moser iteration scheme necessitates that we work in the H\"older class $C^{m,\alpha}$, $m\ge 3$, $\alpha\in (\frac 12,1)$ instead of the usual $C^m$ class, and we will thus have to accommodate for this for the rest of this paper, but this does not affect any of the arguments made so far). This method of solving the orthogonality equation $\nabla \phi\cdot \nabla\psi=0$ only requires us to look at first and second derivatives. We will show in Section \ref{sec:NM-perturb} that generalizing this Nash--Moser iteration argument of \cite{tao2021embedding} from the case of $\bbh^3$ to the broader setting of Carnot groups does not incur serious difficulties even if the step size of $G$ is greater than 2. We will also show that by modifying the techniques of \cite{tao2021embedding}, one can solve the stronger orthogonality equation
\[
X_i\phi\cdot X_j\psi+X_j\phi\cdot X_i \psi =0,\quad i,j=1,\cdots,k,
\]
but we will not be able to obtain the stronger orthogonality equation
\[
X_i\phi\cdot X_j\psi=0,\quad i,j=1,\cdots,k.
\]
See Proposition \ref{Perturbation} and Corollary \ref{perturbation-cor} for the precise statement. One can find a detailed description and motivation of this Nash--Moser iteration scheme in the introduction to \cite{tao2021embedding}.

In light of the proof method for Theorem \ref{mainthm} described above, it is natural to ask whether these methods can be generalized further. Many of these methods depend on the fact that Carnot groups are the tangent spaces of themselves (recall that Carnot groups arise as tangent spaces of sub-Riemannian manifolds). It would be necessary to revamp many of the ideas here, especially the Nash--Moser iteration scheme, to be applicable to the setting of doubling metric spaces. Even the orthonormal vectorfield extension theorem (Theorem \ref{Lip-lifting}) would require some reformulation since it is unlikely that we will have vectorfields in the setting of doubling metric spaces; at the least, the vectorfields should be replaced by elements of a Grassmannian. One realistic hope is that one could transform Theorem \ref{Lip-lifting} into a higher-dimensional block basis variant of the construction of \cite{naor2010assouad} and thus construct an embedding with distortion $O(\varepsilon^{-1/2-\delta})$ using the random net construction of that work. One could also improve upon the results of \cite{naor2010assouad} by using hierarchical nets since these may be easier to control and describe compared to ordinary nets (see \cite{har2006fast} for the construction and applications of hierarchical nets).

The rest of this paper is organized as follows. We first introduce some elementary background in Section \ref{sec:prelim}, and develop the Nash--Moser perturbation theorem in Section \ref{sec:NM-perturb} and the orthonormal extension theorem in Section \ref{sec:ON-ext}. It is in Section \ref{sec:ON-ext} that the proof idea differs the most from \cite{tao2021embedding}: whereas that work proves the orthonormal extension theorem in the spirit of quantitative topology, we will prove it using the concentration of measure phenomenon on the sphere and the Lov\'{a}sz local lemma. We then develop the main iteration lemma in Section \ref{sec:iterationlemma} and show how it gives us our desired embedding in Section \ref{sec:applyiteration}.


\section{Preliminaries}\label{sec:prelim}
\setcounter{equation}{0}

\subsection{Hierarchy of constants}\label{sec:hierarchy}

We will select absolute constants in the following order:
\begin{itemize}[leftmargin=*]
    \item A H\"older exponent $\alpha\in (\frac 12,1)$ and a level of regularity $m^*$ depending on $G$. For simplicity, one can fix $\alpha=\frac 23$ and $m^*=s^2+s+1$, where $s$ is the step size of $G$.
    \item A sufficiently large natural number $C_0>1$ depending on $G$, $\alpha$ and $m^*$. Specifically, this choice will occur in \eqref{C_0_hierarchy-1}, the third inequality of \eqref{C_0_hierarchy-2}, the second inequality of \eqref{C_0_hierarchy-3}, \eqref{C_0_hierarchy-4}, right after \eqref{C_0_A_hierarchy-1}, right before \eqref{C_0_A_hierarchy-2}, \eqref{C_0-hierarchy-5}, and right after \eqref{C_0_A_hierarchy-3}.
    \item A sufficiently large dyadic number $N_0$ depending on $G$ and $C_0$. This choice will occur in \eqref{N_0_hierarchy-6}, in the derivation of \eqref{G-9-again} of Corollary \ref{perturbation-cor}, \eqref{N_0_hierarchy-1}, \eqref{N_0_hierarchy-2}, \eqref{N_0_hierarchy-3}, \eqref{N_0_hierarchy-4}, \eqref{N_0_hierarchy-5}, and \eqref{N_0_hierarchy-7}.
    \item A sufficiently large dyadic number $A$ depending on $G$, $C_0$ and $N_0$. This choice will occur in \eqref{A_hierarchy-2}, \eqref{A_hierarchy_1}, \eqref{A_hierarchy-3}, \eqref{A_hierarchy-5}, right after \eqref{C_0_A_hierarchy-1}, right before \eqref{C_0_A_hierarchy-2}, right after \eqref{C_0_A_hierarchy-3}, right after \eqref{A_hierarchy-6}, \eqref{A_hierarchy-7}, \eqref{A_hierarchy-8}, and \eqref{A_hierarchy-9}.
\end{itemize}

\subsection{Basic linear algebra}\label{sec:linalg}
Denote by $|\cdot |$ the Euclidean metric and by $\langle \cdot ,\cdot \rangle $ the Euclidean inner product on Euclidean spaces $\mathbb{R}^D$.

If $T:\bbr^{D_1}\to \bbr^{D_2}$ is a linear map, we also denote by $|T|$ the Frobenius norm of $T$. Also, for $1\le n\le D$, the exterior power $\bigwedge^n \bbr^D$ can be identified with $\bbr^{ \binom{D}{n}}$, and so we can also define a Euclidean norm $|\cdot |$ and a Euclidean inner product $\langle \cdot ,\cdot \rangle $ on $\bigwedge^n \bbr^D$. With this norm on $\bigwedge^n \bbr^D$, the \textit{Cauchy--Binet formula} tells us that  for $v_1,\cdots,v_n\in \bbr^D$,
\[
|v_1\wedge \cdots\wedge v_n|^2=\det\left(v_i\cdot v_j\right)_{1\le i,j\le n}=\det (TT^*),
\]
where $T:\bbr^D\to\bbr^n$ is the linear map
\[
T(u)\coloneqq (u\cdot v_1,\cdots,u\cdot v_n).
\]
More generally, the \textit{polarized Cauchy--Binet formula} tells us that for $u_1,\cdots, u_n,v_1,\cdots,v_n\in \bbr^D$,
\[
\bigg<u_1\wedge \cdots\wedge u_n,v_1\wedge \cdots\wedge v_n\bigg>=\det\left(u_i\cdot v_j\right)_{1\le i,j\le n}.
\]
It is not difficult to see that we have a Cauchy--Schwarz-like inequality: for every $v_1,\cdots,v_n\in\bbr^D$ and $1\le i<n\le D$, we have
\[
|v_1\wedge \cdots\wedge v_n|\le |v_1\wedge \cdots\wedge v_i||v_{i+1}\wedge \cdots\wedge v_n|.
\]
We will simply refer to this as the Cauchy--Schwarz inequality in the rest of this paper.

\subsection{Some metric space geometry}\label{sec:MG}
Let $(X,d)$ be a metric space. For any $f:X\to\bbr^D$, we define the norms
\[
\|f\|_{C^0}\coloneqq \sup_{x\in X}|f(x)|,\quad \|f\|_{\mathrm{Lip}}\coloneqq \sup_{x,y\in X,~x\neq y}\frac{|f(x)-f(y)|}{d(x,y)}.
\]
These norms satisfy certain algebraic properties. If $f,g:X\to\bbr^D$ then
\begin{equation}\label{first-alg}
    \|f\cdot g\|_{\mathrm{Lip}}\le \|f\|_{C^0}\|g\|_{\mathrm{Lip}}+\|f\|_{\mathrm{Lip}}\|g\|_{C^0}.
\end{equation}
Also, if $f:X\to\bbr$ and $f(x)\ge c>0$ for all $x\in X$ then
\begin{align}
    \|1/f\|_{\mathrm{Lip}}\le c^{-2}\|f\|_{\mathrm{Lip}},\label{reciprocal}\\
    \|\sqrt{f}\|_{\mathrm{Lip}}\le \frac{1}{2\sqrt{c}}\|f\|_{\mathrm{Lip}}.\label{squareroot}
\end{align}
One can use these properties to see that for $f:X\to\bbr^D$ with $|f(x)|\ge c>0$ for all $x\in X$ we have
\begin{equation}\label{unit-alg}
    \left\|\frac{f}{|f|}\right\|_{\mathrm{Lip}}\le \left\|\frac{1}{|f|}\right\|_{C^0}\left\|f\right\|_{\mathrm{Lip}}+\left\|\frac{1}{|f|}\right\|_{\mathrm{Lip}}\left\|f \right\|_{C^0}\le (c^{-1}+c^{-2}\left\|f \right\|_{C^0})\left\|f\right\|_{\mathrm{Lip}}.
\end{equation}

For $\delta>0$, a subset $\mathcal{N}_\delta \subset X$ is a \textit{$\delta$-net} if for any distinct $x,y\in \mathcal{N}_\delta$ one has $d(x,y)\ge \delta $. By Zorn's lemma, $\delta$-nets which are maximal with respect to inclusion exist, and if $\mathcal{N}_\delta$ is a maximal $\delta $-net then we have the covering
\[
X=\bigcup_{x\in \mathcal{N}_\delta}B_\delta(x).
\]
An immediate consequence of the doubling property is that if $X$ is a $K$-doubling metric space and $m\ge 0$, then for any $\delta$-net $\mathcal{N}_\delta$ we have
\begin{equation}\label{doubling-net}
    |\mathcal{N}_\delta\cap B_{2^m \delta}(x)|\le K^{m+1}\quad \mathrm{for~all~} x\in X.
\end{equation}
\subsection{Function spaces on Carnot groups}\label{sec:funcspace}
We will assume that the norm $\|\cdot\|$ on $V_1$ is an inner product on $V_1$ (in other words, we may assume $G$ is a sub-Riemannian Carnot group, as opposed to being a sub-Finsler Carnot group). This is possible by John's ellipsoid theorem, which allows us to replace $\|\cdot\|$ by an inner product norm while introducing distortion at most $\sqrt{k}$, which is acceptable since this is independent of the amount $\varepsilon$ of snowflaking.

We fix a left-invariant orthonormal basis $X_1,\cdots,X_k$ of $V_1$ with respect to $\|\cdot\|$. If $\phi:G\to \bbr^D$ is a differentiable function, we let $\nabla \phi:G\to \bbr^{kD}$ denote the horizontal gradient
\[
\nabla \phi\coloneqq (X_1\phi,\cdots, X_k\phi).
\]
By iteration, we have $\nabla^m\phi:G\to \bbr^{k^mD}$ for any $m\ge 1$, if $\phi$ is $m$ times differentiable. We recall the $C^0$ norm
\[
\|\phi\|_{C^0}= \sup_{p\in G}|\phi(p)|,
\]
and define the higher $C^m$ norms
\[
\|\phi\|_{C^m}\coloneqq \sum_{0\le j\le m} \|\nabla^j \phi\|_{C^0}.
\]
For a fixed spatial scale $R>0$, we define the $C^m_R$ norm to be the rescaled norm
\[
\|\phi\|_{C^m_R}\coloneqq \sum_{0\le j\le m} R^j\|\nabla^j \phi\|_{C^0}.
\]

Given a H\"{o}lder exponent $0<\alpha <1$ we may also define the homogeneous H\"{o}lder norm
\[
\|\phi\|_{\dot{C}^{0,\alpha}}\coloneqq \sup_{p,q\in G,p\neq q}\frac{|\phi(p)-\phi(q)|}{d(p,q)^\alpha}
\]
and the higher H\"{o}lder norms
\[
\|\phi\|_{C^{m,\alpha}}\coloneqq \|\phi\|_{C^m}+\|\nabla^m \phi\|_{\dot{C}^{0,\alpha}}
\]
and more generally, the rescaled H\"{o}lder norm
\[
\|\phi\|_{C^{m,\alpha}_R}\coloneqq \|\phi\|_{C^m_R}+R^{m+\alpha}\|\nabla^m \phi\|_{\dot{C}^{0,\alpha}}.
\]
One may easily verify
\[
\|\phi\|_{C_R^{m,\alpha}}\lesssim \|\phi\|_{C_R^{m+1}}.
\]

By an iterated application of the product rule, one can verify the algebra properties
\begin{align*}
\|\phi\psi\|_{C^m_R}&\lesssim_m \|\phi\|_{C^m_R}\|\psi\|_{C^m_R},\\
\|\phi\psi\|_{C^{m,\alpha}_R}&\lesssim_m \|\phi\|_{C^{m,\alpha}_R}\|\psi\|_{C^{m,\alpha}_R}.
\end{align*}
These inequalities continue to hold when $\phi$ and $\psi$ are vector-valued and we take the wedge product or the dot product, where the constants do not depend on the dimension of the codomain of $\phi$ and $\psi$:
\begin{alignat*}{3}
\|\phi\cdot \psi\|_{C^m_R}\lesssim_m \|\phi\|_{C^m_R}\|\psi\|_{C^m_R}&,\quad \|\phi\cdot \psi\|_{C^{m,\alpha}_R}&\lesssim_m \|\phi\|_{C^{m,\alpha}_R}\|\psi\|_{C^{m,\alpha}_R},\\
\|\phi\wedge \psi\|_{C^m_R}\lesssim_m \|\phi\|_{C^m_R}\|\psi\|_{C^m_R}&,\quad \|\phi\wedge \psi\|_{C^{m,\alpha}_R}&\lesssim_m \|\phi\|_{C^{m,\alpha}_R}\|\psi\|_{C^{m,\alpha}_R}.
\end{alignat*}

More generally, one can observe that these algebra properties continue to hold when we replace the above norms with norms of the form 
\[
\|\phi\|_{C^m_{\{R_j\}_{i=0}^m}}\coloneqq \sum_{0\le j\le m} R_j\|\nabla^j \phi\|_{C^0},
\]
where $\{R_j\}_{i=0}^m$ is a ``sequence of spatial scales'', i.e., a sequence of positive real numbers, that is log-concave: $R_{i}R_j\ge R_{i+j}$. Examples of such norms include
\begin{equation}\label{logconcave}
\|\phi\|+R\|\nabla \phi\|_{C^m},~\mbox{or }\|\phi\|_{C^0}+\|\nabla \phi\|_{C^m_{1/R}},\quad R\ge 1.
\end{equation}

\subsection{Some Carnot group geometry}\label{sec:carnotgeom}

Recall the decomposition $\mathfrak{g}=V_1\oplus V_2\oplus\cdots \oplus V_s$. We will define $\dim G=n$, $\dim V_r=k_r$, $k=k_1$, and the Hausdorff dimension $n_h\coloneqq \sum_{r=1}^s r k_r$. We will assume $s\ge 2$, since if $s=1$, then $G$ is just a finite-dimensional Euclidean space, and near-optimal snowflake embeddings of Euclidean spaces were constructed in \cite{assouad1983plongements}. This will give $n_h\ge 4$, as we must have $k_1\ge 2$ and $k_2\ge 1$.

For $2\le r\le s$, we fix a basis $X_{r,1},\cdots, X_{r,k_r}$ of $V_r$ and extend them to left-invariant vectorfields over $G$. For $r=1$, we simply write $X_{1,i}=X_i$.

As $G$ is nilpotent and simply connected, the exponential map $\exp:\mathfrak{g}\to G$ is a diffeomorphism. Recall that we have defined the scaling maps $\delta_\lambda:\mathfrak{g}\to \mathfrak{g}$ for $\lambda>0$ by
\[
\left.\delta_\lambda\right|_{V_r}=\lambda^i \mathrm{id}_{V_r},\quad r=1,\cdots,s.
\]
One may then define the dilation $\delta_\lambda:G\to G$ so that it commutes with $\exp$:
\[
\delta_\lambda \circ \exp=\exp\circ \delta_\lambda.
\]
One can compute that $\delta_\lambda$ is the unique Lie group automorphism $\delta_\lambda:G\to G$ such that $(\delta_\lambda)_*=\delta_\lambda$.
Moreover, $\delta_\lambda$ interacts with the left-invariant vector fields as follows:
\[
X_{r,i}(\phi \circ \delta_\lambda)=\lambda^r(X_{r,i} \phi)\circ \delta_\lambda,\quad r=1,\cdots, s,~i=1,\cdots, k_r.
\]
The special case $r=1$ tells us that $\delta_\lambda$ is a scaling in the Carnot-Carath\'eodory metric:
\[
d_{G}(\delta_\lambda(p),\delta_\lambda(p'))=\lambda d_{G}(p,p'),\quad p,p'\in G.
\]
By iteration, we can also deduce
\[
\nabla^m (\phi\circ \delta_\lambda)=\lambda^m (\nabla^m \phi)\circ \delta_\lambda.
\]

One can parametrize $G$ by $\bbr^n$, by first identifying $G$ with $\mathfrak{g}$ via the exponential map $\exp$, and then identifying $\mathfrak{g}$ with $\bbr^n$ via the basis $\{X_{r,i}\}_{1\le r\le s,1\le i\le k_r}$. We will denote the corresponding canonical basis as $\{f_{r,i}\}_{1\le r\le s,1\le i\le k_r}$.

We may define a weighted degree for polynomials in $x_{r,i}$ by assigning degree $r$ to $x_{r,i}$. It is clear that $\delta_\lambda$ acting upon a homogeneous polynomial of degree $m$ is just multiplication by $\lambda^m$, so the differential operator $X_{r,i}$ acts on polynomials by reducing the weighted degree by $r$ in each term. One can also see, using the scaling $\delta_\lambda$, that
\[
d_G\left(\exp(\sum_{r=1}^s\sum_{i=1}^{k_r}x_{r,i} X_{r,i}),e_G\right)\asymp_G \sum_{r=1}^s\sum_{i=1}^{k_r}|x_{r,i}|^{1/r}.
\]

One can express the group law in this coordinate system using the Baker-Campbell-Hausdorff formula
\[
gh =
\sum_{m = 1}^s\frac {(-1)^{m-1}}{m}
\sum_{\begin{smallmatrix} r_i + s_i > 0 \\ i=1,\ldots,m \end{smallmatrix}}
\frac{[ g^{r_1} h^{s_1} g^{r_2} h^{s_2} \dotsm g^{r_m} h^{s_m} ]}{(\sum_{j = 1}^m (r_j + s_j)) \cdot \prod_{i = 1}^m r_i! s_i!},
\]
where the sum is finite since $G$ is of step $s$, and we have used the notation
\[
[ g^{r_1} h^{s_1} \dotsm g^{r_m} h^{s_m} ] = [ \underbrace{g,[g,\dotsm[g}_{r_1} ,[ \underbrace{h,[h,\dotsm[h}_{s_1} ,\,\dotsm\, [ \underbrace{g,[g,\dotsm[g}_{r_m} ,[ \underbrace{h,[h,\dotsm h}_{s_m} ]]\dotsm]]].
\]
Thus, we can see that
\[
\left(\sum_{r=1}^s\sum_{i=1}^{k_r}x^0_{r,i} X_{r,i}\right)\left(\sum_{r=1}^s\sum_{i=1}^{k_r}x^1_{r,i} X_{r,i}\right)=\left(\sum_{r=1}^s\sum_{i=1}^{k_r}x^2_{r,i} X_{r,i}\right)
\]
where
\[
x^2_{r,i}=x^0_{r,i}+x^1_{r,i}+(\mbox{homogeneous polynomial of }\{x^0_{r',i'}\}_{r'<r},\{x^1_{r',i'}\}_{r'<r}\mbox{ of degree }r).
\]
It follows that the Lebesgue measure on $\bbr^n$ is a Haar measure on $G$ because the Jacobian of left-multiplication is a unit upper triangular matrix. Also, the left-invariant vectors  $\{X_{r,i}\}_{1\le r\le s,1\le i\le k_r}$ in this coordinate system take the form
\begin{equation}\label{vf-form}
    X_{r,i}=\frac{\partial}{\partial x_{r,i}}+\sum_{r'>r}^s\sum_{j=1}^{k_{r'}}(\mbox{homogeneous polynomial of }\{x_{r'',i'}\}_{r''<r'}\mbox{ of degree }r'-r)\frac{\partial}{\partial x_{r',j}}.
\end{equation}

For $r,r'=1,\cdots,s$, $i=1,\cdots, k_{r}$, $i'=1,\cdots,k_{r'}$, we define the lexicographic ordering
\[
(r,i)\preceq (r',i')\quad\Leftrightarrow \quad r<r'~\mbox{or}~r=r'\mbox{ and }i\le i'.
\]

For $r>0$, we define the open ball
\[
B_r\coloneqq \{h\in G:d_G(h,e_G)<r\},
\]
and for $g\in G$ and $r>0$ we define the open ball
\[
B_r(g)\coloneqq \{h\in G:d_G(h,g)<r\}=gB_r,
\]
where the last equality follows from left-invariance of the metric.

A simple volumetric argument gives the following bound for any $\delta$-net $\mathcal{N}_\delta$:
\begin{equation}\label{locbd}
    |\mathcal{N}_\delta\cap B_R(p)|\le \left(\frac{2R}{\delta}+1\right)^{n_h},\quad p\in G, R>0.
\end{equation}

\subsection{Littlewood--Paley theory on Carnot groups}\label{sec:LP}

A basic tool used in \cite{tao2021embedding} was a Littlewood--Paley theory for functions defined on the Heisenberg group. One can easily modify the argument in that paper to show the following. For a positive number $N$ and a $C^0$ function $\phi:G\to \bbr^D$, one can construct the Littlewood--Paley projection $P_{(\le N)}\phi:G\to \bbr^D$, which is a $C^\infty$ function, and the variants
\[
{P_{(<N)}}\phi\coloneqq P_{(\le N/2)}\phi,~ {P_{(N)}}\phi\coloneqq P_{(\le N)}\phi-{P_{(<N)}}\phi,~ P_{(>N)}\phi\coloneqq \phi-P_{(\le N)}\phi,~ P_{(\ge N)}\phi\coloneqq \phi-P_{(< N)}\phi
\]
which satisfy the following regularity properties.

\begin{theorem}[Littlewood--Paley Theory {\cite[Theorem 6.1]{tao2021embedding}}]\label{LP}
Let $\phi:G\to\bbr^D$ be bounded and continuous.
\begin{enumerate}[leftmargin=*]
    \item (scaling) For any $\lambda>0$ and $N>0$, we have
    \[
    P_{(\le N)}(\phi\circ \delta_\lambda)=(P_{(\le N/\lambda)}\phi)\circ \delta_\lambda,
    \]
    and similarly for ${P_{(<N)}}$, ${P_{(N)}}$, $P_{(\ge N)}$, and $P_{(>N)}$.
    \item (Littlewood--Paley decomposition) For any dyadic number $N_0$, we have
    \[
    \phi=P_{(\le N_0)} \phi+\sum_{N>N_0 ~dyadic} {P_{(N)}}\phi.
    \]
    \item (regularity) If $N,M>0$, $j,l\ge 0$, and $\phi$ is of class $C^l$, one has the estimates
    \begin{align}
        \|\nabla^l P_{(\le N)} \phi \|_{C^j_{1/N}}&\lesssim_{G,j,l} \|\nabla^l \phi\|_{C^0},\label{lp-1}\\
        \| P_{(N)} \phi \|_{C^j_{1/N}},\| P_{(>N)} \phi \|_{C^j_{1/N}}&\lesssim_{G,j,l,\alpha} N^{-l}\|\nabla^l \phi\|_{C^0},N^{-l-\alpha}\|\nabla^l \phi\|_{\dot{C}^{0,\alpha}},\label{lp-5}\\
        \| P_{(\le N)} \phi \|_{C^l_{1/M}},\| P_{(N)} \phi \|_{C^l_{1/M}},\| P_{(>N)} \phi \|_{C^l_{1/M}}&\lesssim_{G,l} \|\phi\|_{C^l_{1/M}}.\nonumber
    \end{align}
\end{enumerate}
\end{theorem}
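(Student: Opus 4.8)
\textbf{Proof proposal for the Littlewood--Paley theorem on Carnot groups (Theorem \ref{LP}).}

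The plan is to follow the strategy used by Tao in the Heisenberg setting \cite{tao2018embedding} and construct the projections via convolution with a left-invariant family of smooth kernels obtained by dilating a single bump function. Concretely, fix a nonnegative $C_c^\infty$ function $\chi:G\to\bbr$ supported in $B_1$ with $\int_G \chi\, dx=1$ (here $dx$ is the Haar=Lebesgue measure identified in subsection \ref{sec:carnotgeom}), and for $N>0$ set $\chi_N\coloneqq N^{n_h}\,\chi\circ\delta_N$, which is supported in $B_{1/N}$ and still integrates to $1$ by the homogeneity of the Haar measure under $\delta_N$ (the Jacobian of $\delta_N$ is $N^{n_h}$). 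Define the projection by right convolution,
\[
P_{(\le N)}\phi(p)\coloneqq (\phi * \chi_N)(p)=\int_G \phi(p\,h^{-1})\,\chi_N(h)\,dh,
\]
and define ${P_{(<N)}}$, ${P_{(N)}}$, $P_{(>N)}$, $P_{(\ge N)}$ by the formulas in the statement. Right convolution commutes with left translations, hence with the left-invariant vectorfields $X_{r,i}$, which is the structural fact that makes all the estimates go through. Smoothness of $P_{(\le N)}\phi$ is immediate since one may differentiate under the integral sign, landing all derivatives on $\chi_N$.

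First I would verify the scaling identity (1): this is a direct change of variables, using $\delta_\lambda\circ\delta_N=\delta_{\lambda N}$ and the fact that $\chi_N\circ\delta_\lambda = \lambda^{-n_h}\chi_{N/\lambda}$ combined with the transformation of Haar measure, so that $(\phi\circ\delta_\lambda)*\chi_N = (\phi*\chi_{N/\lambda})\circ\delta_\lambda$; the variants follow by taking differences. Part (2), the telescoping Littlewood--Paley decomposition, is formal once one knows $P_{(\le N)}\phi\to\phi$ in $C^0_{\mathrm{loc}}$ as $N\to\infty$ (uniform continuity of $\phi$ on the compact support of the convolution, plus $\|\chi_N\|_{L^1}=1$) and then one writes the dyadic telescoping sum $\phi-P_{(\le N_0)}\phi=\sum_{N>N_0}{P_{(N)}}\phi$. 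The heart of the matter is part (3), the regularity estimates. For \eqref{lp-1}, I would use the commutation $\nabla^l P_{(\le N)}\phi = (\nabla^l\phi)*\chi_N$ — valid since the $X_{r,i}$ pass through the convolution — and then estimate $\nabla^j$ of this by distributing derivatives onto the kernel: each application of an $X_i$ to $\chi_N$ costs a factor $N$ by the dilation identity $X_i(\chi\circ\delta_N)=N(X_i\chi)\circ\delta_N$, while $\|(X^{(j)}\chi)_N\|_{L^1}=O_{G,j}(1)$; the $N^j$ gains are exactly absorbed by the $N^{-j}=N^{-j}$ weights built into the $C^j_{1/N}$ norm. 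Wait — let me restate: the weight in $C^j_{1/N}$ is $(1/N)^j = N^{-j}$ on $\nabla^j$, so the $N^j$ from differentiating the kernel is cancelled, giving the bound $\lesssim_{G,j,l}\|\nabla^l\phi\|_{C^0}$. For \eqref{lp-5}, the point is that ${P_{(N)}}$ and $P_{(>N)}$ have a cancellation/mean-zero property at frequency $\gtrsim N$: using $\phi - P_{(\le N)}\phi = \phi - \phi*\chi_N = \int (\phi(p)-\phi(ph^{-1}))\chi_N(h)\,dh$ and that $\chi_N$ is supported in $B_{1/N}$, one bounds the $C^0$ norm by $N^{-l}\|\nabla^l\phi\|_{C^0}$ (Taylor-expanding $\phi$ to order $l$ along horizontal-type paths of length $O(1/N)$, where one must be careful that the ball $B_{1/N}$ in Carnot--Carath\'eodory distance is reachable by controlled horizontal curves — this uses the ball-box structure recorded in subsection \ref{sec:carnotgeom}) or by $N^{-l-\alpha}\|\nabla^l\phi\|_{\dot C^{0,\alpha}}$ in the H\"older case; the higher $C^j_{1/N}$ bounds then follow by again moving derivatives onto the kernel. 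The last inequality of (3) is the crudest: it just says $P_{(\le N)}$, ${P_{(N)}}$, $P_{(>N)}$ are bounded on $C^l_{1/M}$ uniformly in $N$, which follows from $\|\chi_N\|_{L^1}=1$ and the commutation with $\nabla$, together with the triangle inequality for the differences.

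The main obstacle I anticipate is the Taylor-expansion step underlying \eqref{lp-5} in the genuinely noncommutative, higher-step setting: to gain $l$ powers of $N$ from the difference $\phi(p)-\phi(ph^{-1})$ with $h\in B_{1/N}$, one needs a quantitative Taylor remainder estimate on $G$ expressing $\phi(ph^{-1})$ in terms of the horizontal derivatives $\nabla^j\phi(p)$ up to order $l$, with remainder $O(d_G(e,h)^l\|\nabla^l\phi\|_{C^0})$ — but $h^{-1}$ need not be expressible as a short horizontal curve, and in a step-$s$ group a point at CC-distance $\rho$ from the identity may require horizontal displacement of size only $\rho$ but in a way that, after the BCH expansion, produces higher-stratum coordinates of size up to $\rho^r$. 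The resolution is that this is exactly compensated: a vertical coordinate of size $\rho^r$ is "seen" by the degree-$r$ part of the grading, and the ball-box estimate $d_G(\exp(\sum x_{r,i}X_{r,i}),e)\asymp\sum|x_{r,i}|^{1/r}$ guarantees that iterated horizontal derivatives of total order $l$ still control the whole Taylor polynomial with the claimed $N^{-l}$ gain; one formalizes this either by an explicit BCH/ball-box bookkeeping or, more cleanly, by noting it suffices to prove the estimate for $\phi$ replaced by its own mollification and then using the scaling (1) to reduce to $N=1$, where it is a compactness statement. Since Tao \cite{tao2018embedding} carries this out for $\bbh^3$ and the only group-theoretic inputs used are the dilation structure, the ball-box inequality, and the Haar=Lebesgue identity — all of which are recorded in subsection \ref{sec:carnotgeom} for general $G$ — the argument transfers essentially verbatim, which is why the statement is asserted as a modification of Theorem 6.1 of \cite{tao2018embedding} rather than reproved in detail.
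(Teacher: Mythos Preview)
Your proposal contains a genuine gap at the step you treat as routine: the claim that ``right convolution commutes with left translations, hence with the left-invariant vectorfields $X_{r,i}$'' and therefore $\nabla^l P_{(\le N)}\phi = (\nabla^l\phi)*\chi_N$. Right convolution $\phi\mapsto\phi*K$ does commute with left translation, but left-invariant vector fields are infinitesimal \emph{right} translations, and what one actually gets is $X_i(\phi*K)=\phi*(X_iK)$, not $(X_i\phi)*K$. On a nonabelian group these differ: integration by parts gives $(X_i\phi)*K=-\phi*(\tilde X_iK)$ with $\tilde X_i$ the right-invariant field, and $X_i-\tilde X_i$ is a nontrivial higher-stratum operator with polynomial coefficients (cf.\ \eqref{vf-form}). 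So the $l$ derivatives in \eqref{lp-1} do not simply ``pass through'' to $\phi$; your proof of \eqref{lp-1} as written fails, and the same issue undercuts the Taylor-expansion scheme for \eqref{lp-5} since controlling $\phi(ph^{-1})-\phi(p)$ by horizontal derivatives of $\phi$ at $p$ runs into exactly the left/right mismatch.

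This is precisely the point the paper isolates as ``the only part that requires modification in the setting of Carnot groups'': one must establish the transfer identity
\[
\phi*X_iK=\sum_{j=1}^k X_j\phi*K_j
\]
for suitable kernels $K_j$, obtained by writing $X_i-\tilde X_i=\sum_{r\ge 2}\sum_j q_{r,j}\tilde X_{r,j}$ with polynomial $q_{r,j}$, expanding each $\tilde X_{r,j}$ in iterated brackets of $\tilde X_1,\dots,\tilde X_k$, and iterating the integration-by-parts $X\phi*K=-\phi*\tilde X K$. Your compactly supported bump would survive this manipulation (polynomial multiplication preserves compact support), so the construction is salvageable, but the argument you gave skips exactly the nontrivial step. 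Separately, note that the paper uses the spectral definition $P_{(\le N)}=\varphi(L/N^2)$ via Hulanicki's functional calculus rather than a direct bump-function mollifier; this is a different (though related) construction, and the spectral route makes the scaling identity immediate from $L\circ\delta_\lambda=\lambda^2\delta_\lambda\circ L$.
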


The construction of the Littlewood--Paley projection in \cite{tao2021embedding} is as follows. As the Laplace-Kohn operator
\[
L\coloneqq -\sum_{i=1}^k X_i^2
\]
is self-adjoint on $L^2(G)$, where $G$ is given its Haar measure (which is the Lebesgue measure on $\bbr^{n}$), for any $m\in L^\infty(\bbr)$ one can define bounded operators $m(L)$ on $L^2(G)$ which commute with each other and with $L$. But by a result of \cite{hormander1967hypoelliptic}, $L$ is a hypoelliptic operator, and thus one may apply a result of \cite{hulanicki1984functional} to develop a more refined bounded functional calculus for $L$: if $m\in C_c^\infty(\bbr)$, then this operator is given by convolution with a Schwartz function $K:G\to \bbr$:
\[
    m(L)f=f*K,\quad \mathrm{for~all~} f\in  L^2(G),
\]
where a function on $G$ is said to be a Schwarz function if it is a Schwarz function on $\bbr^n$, and $*$ denotes the convolution operator:
\[
f*K(p)=\int_G f(g)K(g^{-1}p)dg.
\]
For such $m$, the operator $m(L)$ can be extended to functions in $C^0$ using the above convolution formula.

Now, if we choose any smooth function $\varphi:\bbr\to\bbr$ supported on $[-1,1]$ that equals $1$ on \linebreak$[-1/2,1/2]$, we can now define, for $N>0$, the Littlewood--Paley projection $P_{(\le N)}$ using the above functional calculus by the formula
\begin{align*}
    P_{(\le N)} &\coloneqq \varphi(L/N^2).
\end{align*}

The proof of the properties listed in Theorem \ref{LP} is mostly the same as presented in Theorem 6.1 of \cite{tao2021embedding}. When following the proof there, the only part that requires modification in the setting of Carnot groups is the following. For $i=1,\cdots,k$, given any Schwarz function $K:G\to\mathbb{R}$, we need to show that there are Schwarz functions $K_j:G\to\mathbb{R}$, $j=1,\cdots,k$, so that for any $C^1$-function $\phi:G\to\mathbb{R}^D$,
\begin{equation}\label{integration_by_parts}
    \phi*X_iK = \sum_{j=1}^k X_j\phi*K_j.
\end{equation}
This is a consequence of integration by parts:
\[
X\phi*K=-\phi*\tilde{X}K,
\]
where $X$ is any left-invariant vector field and $\tilde{X}$ its right-invariant counterpart. First, we have
\[
\phi*X_iK = -X_i\phi*K+\phi*(X_i-\tilde{X}_i)K.
\]
Now recalling \eqref{vf-form} and its variant for the right-invariant counterparts, we see that
\[
X_i-\tilde{X}_i=\sum_{r=2}^s\sum_{j=1}^{k_r}q_{r,j}\tilde{X}_{r,j},
\]
where $q_{r,j}$ is a polynomial that commutes with $\tilde{X}_{r,j}$. Thus
\[
(X_i-\tilde{X}_i)K=\sum_{r=2}^s\sum_{j=1}^{k_r} \tilde{X}_{r,j}(q_{r,j}K),
\]
and since each $\tilde{X}_{r,j}$ may be written as a linear combination of nested brackets of $\tilde{X}_1,\cdots,\tilde{X}_k$, we have
\[
(X_i-\tilde{X}_i)K=\sum_{j=1}^k \tilde{X}_j K_j
\]
for some Schwarz functions $K_j$, since Schwarz functions are closed under multiplication by polynomials and actions by right-invariant vector fields. Hence
\[
\phi*(X_i-\tilde{X}_i)K= -\sum_{j=1}^k X_j\phi * K_j,
\]
and we have the form \eqref{integration_by_parts}.


\section{Nash--Moser Perturbation for a bilinear form}\label{sec:NM-perturb}
\setcounter{equation}{0}
For two given $C^1$ functions $\phi,\psi:G\to \bbr^D$, we define the bilinear form $B(\phi,\psi):G\to\operatorname{Sym}^2(\bbr^{k})\subset \mathbb{R}^k\otimes \mathbb{R}^k$ as
\begin{equation}\label{Bdef}
B(\phi,\psi)\coloneqq \operatorname{Sym}\left((X_i\phi\cdot X_j\psi)_{i,j=1,\cdots,k}\right).
\end{equation}
Later, when constructing good embeddings of the Carnot group $G$, we will encounter the following situation.
Given $\psi:G\to \bbr^D$ with certain regularity properties, so that $\psi$ ``represents'' the geometry of $G$ at scale $A$ and above, we will need to find a ``nontrivial'' solution $\phi:G\to \bbr^D$ to
\begin{equation}\label{perp}
    B(\phi,\psi)=0,
\end{equation}
so that $X\phi\cdot X\psi=0$ for any horizontal left-invariant vectorfield $X$ of $G$. This way, the Pythagorean theorem will tell us that $|\nabla(\phi+\psi)|^2=|\nabla\phi|^2+|\nabla\psi|^2$, which, coupled with an Assouad-type summation technique, will give us optimal control on the growth of $|\nabla \psi|^2$ and hence provide us with the optimal distortion rate $O(\varepsilon^{-1/2})$ (note that Assouad \cite{assouad1983plongements} achieved this orthogonality and hence the optimal distortion by allowing the $\phi$ and $\psi$ to take values in different direct sum components of the target space, but thereby losing control on the dimension of the target space). Here, when we say that $\phi$ is ``nontrivial'', we mean that $\psi+\phi$ also has the regularity properties of $\psi$ but at scale 1 instead of $A$. Attempts to solve this system \eqref{perp} directly using the Leibniz rule and linear algebra gives less control on the smoothness on $\phi$ than that on $\psi$, which is unsuitable for iteration. The solution proposed by \cite{tao2021embedding} was to first find a nontrivial and approximate solution $\Tilde{\phi}$ to \eqref{perp}, or more precisely a solution to the low-frequency equation
\begin{equation}\label{G-5}
    B(\Tilde{\phi},P_{(\le N_0)}\psi)=0.
\end{equation}
This way, we have control on all levels of smoothness of $P_{(\le N_0)}\psi$ (by Theorem \ref{LP} (3)), and hence also on $\tilde{\phi}$. This $\tilde{\phi}$ will be constructed in later sections. Once we have this approximate solution $\tilde{\phi}$, Tao \cite{tao2021embedding} then proposed to use a variant of the Nash--Moser iteration scheme to find small perturbations of $\tilde{\phi}$, which are small enough to preserve the non-triviality of $\Tilde{\phi}$, and which allows us to solve the original equation \eqref{perp}.

Our goal in this section is mainly to show that the Nash--Moser iteration scheme of \cite{tao2021embedding} carries on to the general setting of Carnot groups without obstruction, while proving a slightly stronger orthogonality statement \eqref{perp} than that of \cite{tao2021embedding}. The rest of this section follows the argument of Section 7 of \cite{tao2021embedding}; we have reproduced the entire argument here to keep track of certain calculations that arise from higher-dimensional matrix operations, as well as to state and verify various estimates in the setting of Carnot groups.

Because the Nash--Moser iteration process produces error terms, we will need to consider a slightly more general setting. 
Given $\psi:G\to \bbr^D$ and $F=(F_{ij})_{i,j=1,\cdots,k}:G\to \operatorname{Sym}^2(\bbr^{k})$, we consider the problem of finding a solution $\phi:G\to \bbr^D$ to
\begin{equation}\label{G-1}
    B(\phi,\psi)=F.
\end{equation}

One easy way to solve \eqref{G-1} is to solve the system
\begin{align*}
\begin{cases}
    \phi\cdot X_i\psi =0,\\
    \phi\cdot X_iX_j\psi =-F_{ij},\\
    \phi\cdot X_{2,i'}\psi =0,
\end{cases}
\quad 1\le i,j\le k,~1\le i'\le k_2,
\end{align*}
for then, since $X_iX_j-X_jX_i\in \mathrm{span}\{X_{2,1},\cdots,X_{2,k_2}\}$, $1\le i,j\le k$, we have
\[
\begin{cases}
    \phi\cdot X_i\psi =0,\\
    \phi\cdot X_iX_j\psi =-F_{ij},
\end{cases}
\quad i,j=1,\cdots, k,
\]
and so
\[
X_i\phi\cdot X_j \psi=X_i(\phi\cdot X_j\psi)-\phi\cdot X_iX_j\psi=F_{ij},\quad i,j=1,\cdots,k.
\]
This system is solvable if $\{X_i\psi\}_{i=1,\cdots,k}\cup\{X_iX_j\psi\}_{1\le i\le j\le k}\cup \{X_{2,i'}\psi\}_{i'=1,\cdots,k_2}$ are pointwise independent.

More precisely, for each $p\in G$ define the linear map $T_\psi(p):\bbr^{D}\to\bbr^{k+\frac{k(k+1)}{2}+k_2}$ by
\begin{align*}
    T_\psi(p)(v)\coloneqq \Big((v\cdot X_i\psi(p))_{1\le i\le k},~(v\cdot X_iX_j\psi(p))_{1\le i\le j\le k},~(v\cdot X_{2,i'}\psi(p))_{1\le i'\le k_2}\Big),\quad v\in \bbr^D.
\end{align*}
If $\{X_i\psi\}_{i=1,\cdots,k}\cup\{X_iX_j\psi\}_{1\le i\le j\le k}\cup \{X_{2,i'}\psi\}_{i'=1,\cdots,k_2}$ are pointwise independent, i.e., if $T_\psi(p)$ has full rank, or equivalently (by the Cauchy--Binet formula) if 
\begin{equation*}
\left|\bigwedge_{i=1}^k X_i\psi(p)\wedge \bigwedge_{1\le i\le j\le k} X_iX_j\psi(p)\wedge \bigwedge_{i'=1}^{k_2} X_{2,i'}\psi(p) \right|>0,
\end{equation*}
then we can define the pseudoinverse $T_\psi(p)^{-1}:\bbr^{k+\frac{k(k+1)}{2}+k_2}\to\bbr^{D}$ of $T_\psi(p)$ by the formula
\[
T_\psi(p)^{-1}\coloneqq T_\psi(p)^*(T_\psi(p)T_\psi(p)^*)^{-1}.
\]
(Note that the linear independence condition necessitates that $D\ge k+\frac{k(k+1)}{2}+k_2$, so the pseudoinverse is well-defined.)
Then for any continuous functions $a_i:G\to\bbr$, $i=1,\cdots,k$, $b_{ij}:G\to\bbr$, $1\le i\le j\le k$, $c_{i'}:G\to\bbr$, $i'=1,\cdots,k_2$, we have the pointwise identities
\begin{align}
\begin{aligned}\label{G-4}
    T_\psi(p)^{-1}(a_1(p),\cdots,a_k(p),b_{11}(p),\cdots,b_{kk}(p),\qquad\qquad\qquad&&&& \\
	c_1(p),\cdots,c_{k_2}(p))\cdot X_i\psi(p) &=a_i(p),&\quad &i=1,\cdots, k,&\\
    T_\psi(p)^{-1}(a_1(p),\cdots,a_k(p),b_{11}(p),\cdots,b_{kk}(p),\qquad\qquad\qquad&&&& \\
	c_1(p),\cdots,c_{k_2}(p))\cdot X_iX_j\psi(p) &=b_{ij}(p),&\quad &1\le i\le j\le k,&\\
    T_\psi(p)^{-1}(a_1(p),\cdots,a_k(p),b_{11}(p),\cdots,b_{kk}(p),\qquad\qquad\qquad&&&&\\
	 c_1(p),\cdots,c_{k_2}(p))\cdot X_{2,i'}\psi(p) &=c_{i'}(p),&\quad &i'=1,\cdots,k_2.&\\
\end{aligned}
\end{align}
If we define
\begin{equation}\label{flip-b}
    b_{ji}=b_{ij}+\sum_{i'=1}^{k_2}\alpha_{i,j,i'}c_{i'},\quad 1\le i< j\le k,
\end{equation}
where $\alpha_{i,j,i'}$ are the structure constants so that
\begin{equation}\label{struc_const}
    X_jX_i-X_iX_j=\sum_{i'=1}^{k_2}\alpha_{i,j,i'}X_{2,i'},\quad 1\le i\le j\le k,
\end{equation}
then we can extend the second equation of \eqref{G-4} to 
\begin{equation}\label{G-4-ext}
    T_\psi(p)^{-1}(a(p),b(p),c(p))\cdot X_iX_j\psi(p) =b_{ij}(p),\quad i, j=1,\cdots, k.
\end{equation}
So, by using the Leibniz rule as above, we have
\begin{equation}\label{G-4-summary}
X_i\Big( T_\psi^{-1}(a,b,c)\Big)\cdot X_j\psi = X_i(a_j)-b_{ij},\quad i, j=1,\cdots, k.
\end{equation}
As a consequence, one has the explicit solution
\begin{equation}\label{explicit}
    \phi_{\mathrm{explicit}}(p)\coloneqq T_\psi(p)^{-1}(0,-F(p),0)
\end{equation}
to \eqref{G-1} (when we are plugging in $F(p)$ to the above expression, we are using a standard identification $\operatorname{Sym}^2(\mathbb{R}^k)\simeq \mathbb{R}^{\frac{k(k+1)}{2}}$).

The problem with this solution to \eqref{G-1} is that the solution $\phi_{\mathrm{explicit}}$ constructed in this manner will have two fewer degrees of regularity than $\psi$, which will be unsuitable for iteration purposes. We will overcome this issue by applying the above procedure to the Littlewood--Paley components of $\psi$ and $F$.

\begin{proposition}[Perturbation theorem, analog of {\cite[Proposition 7.1]{tao2021embedding}}]\label{Perturbation}
Let $M$ be a real number with
\[
M\ge C_0^{-1}.
\]
Let ${m^*}\ge 2$ and $\frac 12<\alpha<1$.
Suppose we are given a $C^{{m^*},\alpha}$-map $\psi:G\to \bbr^{D}$ with the following regularity properties:
\begin{enumerate}[leftmargin=*]
    \item (H\"older regularity at scale $A$) We have
    \begin{equation}\label{psi-4}
        \|\nabla^2 \psi\|_{C_A^{{m^*}-2,\alpha}}\le C_0A^{-1}.
    \end{equation}
    \item (nondegenerate first derivatives) For any $p\in G$, we have
    \begin{equation}\label{psi-1}
        C_0^{-1}M\le |X_i\psi(p)|\le C_0 M,\quad i=1,\cdots,k,
    \end{equation}
    \item (locally free embedding) For any $p\in G$, we have
    \begin{equation}\label{psi-3}
        \left|\bigwedge_{i=1}^k X_i\psi(p)\wedge \bigwedge_{1\le i\le j\le k} X_iX_j\psi(p)\wedge \bigwedge_{i'=1}^{k_2} X_{2,i'}\psi(p) \right|\gtrsim_{C_0} A^{-\frac{k(k+1)}{2}-k_2}M^{k}.
    \end{equation}
\end{enumerate}

Let $F:G\to\operatorname{Sym}^2(\bbr^{k})$ be a function with bounded  $C^{2{m^*}-1}$-norm: $\|F\|_{C^{2{m^*}-1}}<\infty$. Let $\Tilde{\phi}:G\to\bbr^{D}$ be a solution to the low-frequency equation \eqref{G-5} with bounded $C^{{m^*},\alpha}$-norm: $\|\Tilde{\phi}\|_{C^{{m^*},\alpha}}<\infty$. Then there exists a $C^{{m^*},\alpha}$-solution $\phi$ to \eqref{G-1} which is a small perturbation of $\tilde{\phi}$:
\begin{equation}\label{G-8}
    \|\phi-\Tilde{\phi}\|_{C^{{m^*},\alpha}}\lesssim_{C_0} A\|F\|_{C^{2{m^*}-1}}+A^{2-{m^*}-\alpha}\|\Tilde{\phi}\|_{C^{{m^*},\alpha}},
\end{equation}
and which makes the following cross terms small:
\begin{align}\label{G-9}
\begin{aligned}
    &\|X_i\phi\cdot X_j\psi-X_i\Tilde{\phi}\cdot X_jP_{(\le N_0)}\psi\|_{C^0}\\
&\qquad\qquad\qquad\lesssim_{C_0} \|F\|_{C^{2{m^*}-1}}+N_0^{1-{m^*}-\alpha}A^{1-{m^*}-\alpha}\|\Tilde{\phi}\|_{C^{{m^*},\alpha}},\quad i,j=1,\cdots,k.
\end{aligned}
\end{align}
Here, we treat $\alpha$ as a universal constant (we may take $\alpha=\frac 23$), and we allow the constant $C_0$ to depend on ${m^*}$. This will not contradict our hierarchy of constants, as ${m^*}$ will be later chosen to depend on $G$ (more precisely, it will equal $s^2+s+1$).
\end{proposition}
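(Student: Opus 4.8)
The plan is to prove Proposition \ref{Perturbation} by running a Nash--Moser iteration scheme, producing a sequence of corrections $\phi = \tilde\phi + \sum_{j\geq 1} \rho_j$, where each $\rho_j$ lives at a rapidly increasing Littlewood--Paley frequency $N_j \sim N_0^{\kappa^j}$ for some $\kappa > 1$, and where $\rho_j$ is chosen so as to cancel the error accumulated after the $(j-1)$-th step. First I would set up the linearized solution operator: given a target symmetric matrix-valued error $E$, we use the explicit formula \eqref{explicit}, but applied to Littlewood--Paley-truncated data --- i.e., we solve the low-frequency linearized equation $B(\rho, P_{(\leq N_j)}\psi) = P_{(\leq N_j)}(\text{error})$ using $\rho = T_{P_{(\leq N_j)}\psi}^{-1}(0, -P_{(\leq N_j)}(\text{error}), 0)$, so that $\rho$ inherits all levels of regularity (with losses polynomial in $N_j$) via Theorem \ref{LP}(3). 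The key quantitative inputs are: (a) the pseudoinverse $T_\psi^{-1}$ is controlled by the lower bound \eqref{psi-3} together with the upper bounds from \eqref{psi-1} and \eqref{psi-4} (plus the algebra and reciprocal/square-root estimates \eqref{reciprocal}, \eqref{squareroot}, \eqref{unit-alg} from subsection \ref{sec:MG}), and the truncated versions of these bounds persist for $P_{(\leq N)}\psi$ when $A$ is large compared to $N_0$; (b) the quadratic error produced by one step of the scheme is governed by the Leibniz identity \eqref{G-4-summary}, which shows that the defect in $B(\tilde\phi + \rho, \psi)$ after one correction is of the schematic form $X_i(a_j) - b_{ij}$ type high-frequency remainders plus genuinely quadratic terms $X\rho \cdot X\rho$ and commutator/frequency-mismatch terms $X\rho\cdot X(P_{(>N_j)}\psi)$.

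Next I would carry out the bookkeeping of the iteration. Introduce dyadic frequencies $N_j = N_0^{(3/2)^j}$ (the exponent chosen so that the scheme converges super-exponentially), and inductively maintain two estimates: a smallness estimate $\|\rho_j\|_{C^{m^*,\alpha}_{1/N_j}} \lesssim \delta_j$ and an error estimate $\|E_j\|_{C^{2m^*-1}} \lesssim \eta_j$ for the accumulated defect $E_j := B(\tilde\phi + \rho_1 + \cdots + \rho_j, \psi)$, where $\delta_j, \eta_j$ are explicit geometric-type sequences with $\eta_j \to 0$. The crucial interpolation-type inequality is that a $C^{m^*,\alpha}$-bound at scale $1/N_j$ transfers, after summing the Littlewood--Paley pieces, to a bound in the original (scale-$1$) $C^{m^*,\alpha}$ norm with loss $N_j^{0}$ for the top norm but with gains $N_j^{-l-\alpha}$ in \eqref{lp-5} for the high-frequency tails; this is exactly what makes the quadratic error decay. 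The base case is the estimate of $B(\tilde\phi, \psi) = B(\tilde\phi,\psi) - B(\tilde\phi, P_{(\leq N_0)}\psi) = B(\tilde\phi, P_{(>N_0)}\psi)$, which by \eqref{lp-5} and the algebra property is bounded by $N_0^{1-m^*-\alpha}\|\tilde\phi\|_{C^{m^*,\alpha}}$ times powers of $A$ coming from \eqref{psi-4} --- this produces the $A^{2-m^*-\alpha}\|\tilde\phi\|_{C^{m^*,\alpha}}$ and $N_0^{1-m^*-\alpha}A^{1-m^*-\alpha}\|\tilde\phi\|_{C^{m^*,\alpha}}$ terms in \eqref{G-8} and \eqref{G-9}, while the $F$-contribution enters linearly and produces the $A\|F\|_{C^{2m^*-1}}$ term. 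Summing the telescoping series $\sum_j \rho_j$ then yields the perturbation bound \eqref{G-8}, and tracking the first-order term in the last correction (everything beyond $\rho_1$ being negligible) against \eqref{G-4-summary} gives the cross-term estimate \eqref{G-9}.

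The main obstacle I anticipate is the \emph{loss-of-derivatives accounting} in the iteration: a single application of the explicit solver $\phi_{\mathrm{explicit}} = T_\psi^{-1}(0,-F,0)$ costs two derivatives on $\psi$ and one on $F$, so the scheme only closes because (i) we truncate to frequency $N_j$, converting each lost derivative into a factor $N_j$ rather than an actual loss of smoothness (Theorem \ref{LP}(3)), and (ii) the quadratic nature of the error means that the gain from the previous step's smallness $\delta_{j-1}^2$ beats the derivative loss $N_j^{O(m^*)}$ provided $\delta_{j-1}$ was already quite small, which forces the correct choice of the frequency-growth rate $\kappa = 3/2$ and of how small the initial data must be --- this is precisely why $N_0$ must be taken large (third bullet of subsection \ref{sec:hierarchy}) and $A$ larger still. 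A secondary technical annoyance is that the pseudoinverse $T_{P_{(\leq N_j)}\psi}^{-1}$ must be re-estimated at \emph{every} frequency scale: one must show the freeness lower bound \eqref{psi-3} is inherited by $P_{(\leq N_j)}\psi$ (for $A \gg N_j$, which is only guaranteed for $j$ up to a threshold, but beyond that threshold $N_j > A$ and one instead uses that $\rho_j$ is then so tiny it can be controlled crudely), and that differentiating $T^{-1} = T^*(TT^*)^{-1}$ and expanding via \eqref{reciprocal} keeps the constants dimension-independent --- here the wedge-product Cauchy--Schwarz inequality of subsection \ref{sec:linalg} and the algebra properties of the $C^{m,\alpha}_R$ norms do the heavy lifting. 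Once these estimates are in place the iteration is routine, following Section 7 of \cite{tao2018embedding} essentially verbatim except for the presence of the extra block $\{X_{2,i'}\psi\}$ (needed to symmetrize via \eqref{flip-b}--\eqref{struc_const}) and the extra powers of $A$ that arise because, unlike in $\mathbb{H}^3$, higher brackets beyond step $2$ appear in \eqref{vf-form}.
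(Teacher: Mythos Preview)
Your proposal contains a conceptual error about the structure of the problem. The equation $B(\phi,\psi)=F$ is \emph{linear} in $\phi$ (with $\psi$ fixed), so when you write $\phi = \tilde\phi + \rho$ and compute the defect $B(\tilde\phi+\rho,\psi)-F = B(\tilde\phi,\psi)+B(\rho,\psi)-F$, there are no ``genuinely quadratic terms $X\rho\cdot X\rho$'' whatsoever. The only errors after a correction step are the frequency-mismatch term $B(\rho, P_{(>N_j)}\psi)$ and the uncorrected high-frequency part of the target. Consequently the classical Newton-type Nash--Moser picture you sketch --- super-exponentially growing frequencies $N_j = N_0^{(3/2)^j}$, quadratic convergence $\delta_{j-1}^2$ beating the derivative loss $N_j^{O(m^*)}$, and a separate regime once $N_j>A$ --- is not the right mechanism here; the quadratic gain you are counting on simply does not exist.

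The paper's scheme is structurally different from what you describe. It first observes (by linearity in $\phi$) that it suffices to build an \emph{approximate} solution satisfying $\|B(\phi,\psi)-F\|_{C^{2m^*-1}} \le A^{2-m^*}\|F\|_{C^{2m^*-1}} + A^{3-2m^*}\|\tilde\phi\|_{C^{m^*,\alpha}}$; the outer iteration (replace $(\tilde\phi,F)$ by $(0, F-B(\phi,\psi))$ and sum) then converges \emph{linearly}. For the approximate solution, the paper runs over \emph{all} dyadic $N>N_0$, not a sparse subsequence: it sets $\phi_{(\le N_0)} = \tilde\phi + T_{P_{(\le N_0)}\psi}^{-1}(0,-P_{(\le N_0)}F,0)$ and, for each dyadic $N>N_0$, defines $\phi_{(N)} = T_{P_{(\le N)}\psi}^{-1}(a_N,b_N,c_N)$ with the specific choice \eqref{new-ab}, in which all derivatives fall on $P_{(\le N)}\phi_{(<N)}$ and none on $P_{(N)}\psi$. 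This is engineered so that the telescoping identity $B(\phi_{(N)},P_{(\le N)}\psi) + B(P_{(\le N)}\phi_{(<N)}, P_{(N)}\psi) = P_{(N)}F$ holds \emph{exactly}; summing the telescope yields
\[
B(\phi,\psi)-F = \sum_{N>N_0} B\bigl(P_{(>N)}\phi_{(<N)},\, P_{(N)}\psi\bigr),
\]
a high--high paraproduct that lands in $C^{2m^*-1}$ because each factor contributes $N^{-m^*+O(1)}$. The series $\sum_N \phi_{(N)}$ converges in $C^{m^*,\alpha}$ as an ordinary geometric series in $N^{-\alpha}$ (and the paraproduct sums via $\sum_N N^{1-2\alpha}<\infty$, which is where $\alpha>\tfrac12$ enters). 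The pseudoinverse $T_{P_{(\le N)}\psi}^{-1}$ is controlled uniformly over all dyadic $N\ge N_0$ (Lemma \ref{tpsi}); there is no threshold in $N$ and no quadratic mechanism.
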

\begin{remark}\label{perturb-rem}
\begin{enumerate}[leftmargin=*]
    \item The bilinear form used in \cite{tao2021embedding} was the simpler version
    \[
    \tilde B(\phi,\psi)=(X_1\phi\cdot X_1\psi,\cdots,X_k\phi\cdot X_k\psi),
    \]
    and the corresponding Nash-Moser iteration scheme only established the weaker orthogonality $\tilde B (\phi,\psi)=0$, while still being able to establish the estimate \eqref{G-9}. We may create a version of Proposition \ref{Perturbation} for $\tilde B$ by assuming the weaker freeness property for \eqref{psi-3}:
    \[
    \left|\bigwedge_{i=1}^k X_i\psi(p)\wedge \bigwedge_{i=1}^k X_iX_i\psi(p) \right|\gtrsim_{C_0} A^{-k}M^{k}.
    \]
    The proof of this weaker Proposition would be not so different from the proof of Proposition \ref{Perturbation} given below, where we would modify the pseudoinverse $T_\psi^{-1}$ and its usage in the obvious way. The statement \eqref{G-9} becomes slightly harder to prove, but one may directly implement the methods of \cite{tao2021embedding}.
    \item One may imagine strengthening the theorem to obtain the stronger full orthogonality statement
\[
X_i\phi\cdot X_j\psi=0,\quad i,j=1\cdots,k,
\]
but attempts to modify the Nash--Moser iteration scheme to accommodate this difference cause the resulting infinite series to diverge (more specifically, we are then forced to place derivatives on ${P_{(N)}}\psi$ in \eqref{new-ab}, which we must avoid in order to make the defining series converge). The best one can achieve with the tools outlined in this section is \eqref{G-9}. Nevertheless, we will be able to choose $\tilde{\phi}$ with
\[
X_i\tilde{\phi}\cdot X_jP_{(\le N_0)}\psi=0,\quad i,j=1\cdots,k,
\]
which, along with \eqref{G-9}, establishes that $X_i\phi\cdot X_j\psi$ is sufficiently small.
\end{enumerate}
\end{remark}

\begin{proof}  It will suffice to find a function $\phi$ with the stated bounds solving the approximate equation
\begin{equation}\label{phi-bounds}
\| B(\phi,\psi) - F \|_{C^{2{m^*}-1}} \le A^{2-{m^*}}\|F\|_{C^{2{m^*}-1}}+A^{3-2{m^*}}\|\tilde{\phi}\|_{C^{{m^*},\alpha}}
\end{equation}
rather than the precise equation \eqref{G-1}, while satisfying \eqref{G-8} and \eqref{G-9}:
\begin{equation}\label{G-8-prime}
    \|\phi-\Tilde{\phi}\|_{C^{{m^*},\alpha}}\lesssim_{C_0} A\|F\|_{C^{2{m^*}-1}}+A^{2-{m^*}-\alpha}\|\Tilde{\phi}\|_{C^{{m^*},\alpha}},
\end{equation}
and
\begin{align}\label{G-9-prime}
\begin{aligned}
    &\|X_i\phi\cdot X_j\psi-X_i\Tilde{\phi}\cdot X_jP_{(\le N_0)}\psi\|_{C^0}\\
	&\qquad\qquad\qquad\lesssim_{C_0} \|F\|_{C^{2{m^*}-1}}+N_0^{1-{m^*}-\alpha}A^{1-{m^*}-\alpha}\|\Tilde{\phi}\|_{C^{{m^*},\alpha}},\quad i,j=1,\cdots,k.
\end{aligned}
\end{align}
One can then iteratively replace $(\tilde \phi, F)$ by the error term $(0, F - B(\phi,\psi))$  and sum the resulting solutions to obtain an exact solution to \eqref{G-1}. This is possible due to the linearity of this equation in $\phi$.

We will first construct a low-frequency solution $\phi_{(\le N_0)}$ to the low-frequency equation
\[
B(\phi_{(\le N_0)}, P_{(\le N_0)} \psi) = P_{(\le N_0)} F
\]
and then, given $\phi_{(\le N/2)}$ for dyadic $N > N_0$ by induction, we will add higher frequency components $\phi_{(N)}$ to obtain $\phi_{(\le N)}:=\phi_{(\le N/2)}+\phi_{(N)}$, which approximately solves the higher-frequency equation
\[
B(\phi_{(\le N)}, P_{(\le N)} \psi) \approx P_{(\le N)} F.
\]

More specifically, the construction goes as follows. We first construct the low-frequency component as
\begin{equation}\label{phi-start}
 \phi_{(\le N_0)} \coloneqq \tilde \phi + T_{P_{(\le N_0)} \psi}^{-1}(0, -P_{(\le N_0)} F, 0).
\end{equation}
This form was chosen so that, from \eqref{G-4-summary}, one has
\begin{equation}\label{low-freq-cross-term}
    X_i(\phi_{(\le N_0)}-\tilde \phi)\cdot X_jP_{(\le N_0)} \psi = P_{(\le N_0)} F_{ij},\quad i,j=1,\cdots,k,
\end{equation}
and from \eqref{G-5} and the bilinearity of $B$, one has
\begin{equation}\label{b-ident-1}
 B(\phi_{(\le N_0)}, P_{(\le N_0)} \psi) = P_{(\le N_0)} F.
\end{equation}

Next, for every dyadic $N > N_0$ we recursively define the higher-frequency component $\phi_{(N)}$ by the formula
\begin{equation}\label{phi-add}
 \phi_{(N)} \coloneqq T_{P_{(\le N)} \psi}^{-1}((a^i_N)_{1\le i\le k},(b^{ij}_N)_{1\le i\le j\le k}, (c^{i'}_N)_{1\le i'\le k_2})
\end{equation}
where
\begin{align}\label{new-ab}
\begin{cases}
\begin{aligned}
a^i_N &\coloneqq - (X_i P_{(\le N)} \phi_{(<N)}) \cdot {P_{(N)}} \psi, \\
b^{ij}_N &\coloneqq - (X_iX_j P_{(\le N)} \phi_{(<N)}) \cdot {P_{(N)}} \psi - {P_{(N)}} F_{ij},\\
c^{i'}_N &\coloneqq - (X_{2,i'}P_{(\le N)}\phi_{(<N)}) \cdot {P_{(N)}} \psi,
\end{aligned}
\end{cases}
\quad 1\le i\le j\le k,~1\le i'\le k_2,
\end{align}
and
\[
\phi_{(<N)} \coloneqq \phi_{(\le N_0)} + \sum_{\substack{N_0 < M < N\\ M ~\mathrm{dyadic}}} \phi_{(M)}.
\]
We will also define
\[
\phi_{(\le N)} \coloneqq \phi_{(\le N_0)} + \sum_{\substack{N_0 < M \le N\\ M~\mathrm{dyadic}}} \phi_{(M)}.
\]
Note that in the definition of $\phi_{(N)}$, no derivatives are placed on ${P_{(N)}} \psi$ and there is some mollification of the $\phi_{(<N)}$ term, in order to avoid the loss of derivatives problem.
This form of $\phi_{(N)}$ was chosen so that
\begin{equation}\label{b-ident-2}
 B(\phi_{(N)}, P_{(\le N)} \psi) + B(P_{(\le N)} \phi_{(<N)}, {P_{(N)}} \psi) = {P_{(N)}} F.
\end{equation}
Indeed, by \eqref{flip-b}, \eqref{struc_const}, and \eqref{G-4-ext}, we have
\[
b^{ij}_N = - (X_iX_j P_{(\le N)} \phi_{(<N)}) \cdot {P_{(N)}} \psi - {P_{(N)}} F_{ij},\quad  i,j=1,\cdots, k,
\]
so we can compute, for $i,j=1,\cdots,k$,
\begin{align}\label{high-freq-cross-term}
\begin{aligned}
&X_i \phi_{(N)}\cdot X_j P_{(\le N)} \psi+X_iP_{(\le N)} \phi_{(<N)}\cdot X_j {P_{(N)}} \psi\\
&\qquad\qquad\qquad\qquad\qquad\qquad\stackrel{\mathclap{\eqref{G-4-summary}}}{=}~\big(X_ia_N^j-b_N^{ij}\big)+X_iP_{(\le N)} \phi_{(<N)}\cdot X_j {P_{(N)}} \psi\\
&\qquad\qquad\qquad\qquad\qquad\qquad=~{P_{(N)}} F_{ij}-X_jP_{(\le N)} \phi_{(<N)}\cdot X_i {P_{(N)}} \psi+X_iP_{(\le N)} \phi_{(<N)}\cdot X_j {P_{(N)}} \psi.
\end{aligned}
\end{align}
Symmetrizing in $i$ and $j$ now gives \eqref{b-ident-2}.

We have the following control on the regularity of $T_{P_{(\le N)} \psi}^{-1}$:

\begin{lemma}[regularity of the pseudoinverse; analog of {\cite[Lemma 7.2]{tao2021embedding}}]\label{tpsi}  For any $N \geq N_0$, one has
\begin{align*}
\| T_{P_{(\le N)} \psi}^{-1} \|_{C^{{m^*}-2}_A} &\lesssim_{C_0} A,\\
\| \nabla^{{m^*}-2} T_{P_{(\le N)} \psi}^{-1} \|_{C^{{m^*}+1}_{1/N}} &\lesssim_{C_0} A^{3-{m^*}}.
\end{align*}
\end{lemma}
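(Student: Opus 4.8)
The plan is to bound $T_{P_{(\le N)}\psi}^{-1}$ by first bounding $T_{P_{(\le N)}\psi}$ (the "forward" operator) in the relevant rescaled norms, then controlling the inverse matrix $(T_{P_{(\le N)}\psi}T_{P_{(\le N)}\psi}^*)^{-1}$ via the freeness hypothesis \eqref{psi-3} and the Cauchy--Binet formula, and finally combining these through the algebra properties of the $C^m_R$-norms. The key observation that makes everything work is that $P_{(\le N)}\psi$, being a Littlewood--Paley truncation of a function that oscillates at scale $A$, enjoys \emph{scale-$A$} control on \emph{all} of its derivatives up to order $m^*-2+({m^*}+1)+2$ or so, with losses in $N$ that are harmless because $A$ dominates $N$ (and $N\ge N_0$). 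Concretely, applying Theorem \ref{LP}(3), in particular \eqref{lp-1}, to $\psi$ at spatial scale $A$ gives $\|\nabla^j P_{(\le N)}\psi\|_{C^0}\lesssim_{G,j} A^{-1}\cdot A^{-(j-2)}$ for $2\le j$, using the hypothesis \eqref{psi-4} that $\|\nabla^2\psi\|_{C^{{m^*}-2,\alpha}_A}\le C_0 A^{-1}$, together with \eqref{psi-1} for the lower-order terms; in other words $\|X_i\psi\|,\|X_iX_j\psi\|$ and their higher derivatives are all controlled at scale $A$ with the expected homogeneity, and the same holds for $X_{2,i'}\psi$ since $X_{2,i'}$ is a commutator of two horizontal fields. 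From this one reads off $\|T_{P_{(\le N)}\psi}\|_{C^{m^*-2}_A}\lesssim_{C_0} M$ (the $X_i\psi$ block is size $M$, the second-order blocks are size $A^{-1}$, and the rescaled norm absorbs the discrepancy since $A^{-1}\cdot A = 1 \le M/C_0^{-1}$... here one is careful with the $A$-weights built into the two different "rows" of $T_\psi$).

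Next I would handle the inverse. Writing $S:=T_{P_{(\le N)}\psi}T_{P_{(\le N)}\psi}^*$, a square matrix of size $k+\tfrac{k(k+1)}{2}+k_2$, the Cauchy--Binet formula identifies $\det S$ with the squared norm of the wedge product in \eqref{psi-3}, so the freeness hypothesis gives $\det S\gtrsim_{C_0} (A^{-\frac{k(k+1)}{2}-k_2}M^k)^2$ — but one must apply \eqref{psi-3} to $P_{(\le N)}\psi$ rather than $\psi$; this is legitimate because $\|\nabla^j(P_{(\le N)}\psi - \psi)\|_{C^0}\lesssim_{G,j,{m^*},\alpha} N_0^{2-{m^*}-\alpha}A^{1-{m^*}-\alpha}$ by \eqref{lp-5} (bounding $P_{(>N)}\psi$), which is a negligible perturbation of the wedge product when $N_0$ and $A$ are large, so the wedge product for $P_{(\le N)}\psi$ is still $\gtrsim_{C_0} A^{-\frac{k(k+1)}{2}-k_2}M^k$. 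Then $S^{-1} = \frac{1}{\det S}\operatorname{adj}(S)$; the entries of $\operatorname{adj}(S)$ are polynomials in the entries of $S$ (hence in the derivatives of $P_{(\le N)}\psi$), controlled at scale $A$ by the algebra property, and dividing by $\det S$ — using the reciprocal estimate, the analogue of \eqref{reciprocal}, in the $C^m_A$ norm, which is valid since $\det S$ is bounded below — yields $\|S^{-1}\|_{C^{m^*-2}_A}$ and $\|\nabla^{m^*-2}S^{-1}\|_{C^{m^*+1}_{1/N}}$ bounds with the appropriate powers of $A$ and $M$. Here the $C^m_{1/N}$ part is where the $N$-dependent losses of \eqref{lp-1}, \eqref{lp-5} enter — at spatial scale $1/N$ one pays $N^{m^*-2+\text{higher}}$ in derivatives of $P_{(\le N)}\psi$ — but $A\ge N$ (indeed $A$ is chosen last and dominates $N_0\ge N$ is false, rather $N\ge N_0$ and both are dominated by $A$) so these are reabsorbed, giving the claimed $A^{3-m^*}$.

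Finally I would assemble $T_{P_{(\le N)}\psi}^{-1}=T_{P_{(\le N)}\psi}^*S^{-1}$ using the algebra property of the relevant (log-concave) rescaled norms — note $C^{m^*-2}_A$ and the "two-scale" norm $\|\cdot\| + \|\nabla^{m^*-2}(\cdot)\|_{C^{m^*+1}_{1/N}}$ appearing in \eqref{logconcave} are of the permissible form — and track the powers of $M$ and $A$: one factor of $T^*$ contributes $\sim M$ (or $A^{-1}$ on the second-order rows), $S^{-1}$ contributes $\sim A^{\frac{k(k+1)}{2}+k_2 \cdot 2}M^{-2k}\cdot(\text{polynomial in entries})$, and the bookkeeping collapses, after using $C_0^{-1}\le M$, to $\|T_{P_{(\le N)}\psi}^{-1}\|_{C^{m^*-2}_A}\lesssim_{C_0} A$ and $\|\nabla^{m^*-2}T_{P_{(\le N)}\psi}^{-1}\|_{C^{m^*+1}_{1/N}}\lesssim_{C_0} A^{3-m^*}$. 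The main obstacle, I expect, is not any single estimate but the careful bookkeeping of the two different spatial scales ($A$ for the low-order data, $1/N$ for the high-order tail) and the asymmetry between the "first-derivative rows" (size $\sim M$) and the "second-derivative rows" (size $\sim A^{-1}$) of $T_\psi$ — getting the powers of $A$, $M$, and $N$ to line up exactly as stated, while invoking the freeness lower bound \eqref{psi-3} for $P_{(\le N)}\psi$ (not $\psi$), is the delicate part; everything else is a routine application of the algebra, reciprocal, and Littlewood--Paley estimates already recorded in Section \ref{sec:prelim}.
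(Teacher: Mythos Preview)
Your overall approach is the same as the paper's: write $T^{-1}=T^*\,\mathrm{adj}(TT^*)/\det(TT^*)$, track the block structure of $T$ (first-derivative rows of size $\sim M$, second-derivative rows of size $\sim A^{-1}$), transfer the freeness lower bound \eqref{psi-3} from $\psi$ to $P_{(\le N)}\psi$ via \eqref{lp-5}, and combine using the product and quotient rules. The paper organizes the bookkeeping by introducing a single log-convex sequence $B_l$ equal to $A^{-l}$ for $l\le m^*-2$ and $N^{l-m^*+2}A^{2-m^*}$ for $l>m^*-2$, so that every relevant derivative bound reads $|\nabla^l(\text{row of }T)|\lesssim_{C_0}(\text{size})\cdot B_l$; this streamlines the block-by-block tracking you describe, but is not conceptually different.

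There is, however, one genuine misconception in your write-up that would derail the argument if followed literally. You write that the $N$-dependent losses are ``reabsorbed'' because ``$A\ge N$'' and ``both [$N_0$ and $N$] are dominated by $A$''. This is false: $N$ ranges over all dyadic numbers $\ge N_0$ and can be arbitrarily large, far larger than $A$. The reason the final bound $A^{3-m^*}$ is $N$-independent is \emph{not} that $A$ beats $N$, but that the norm in question is $C^{m^*+1}_{1/N}$, a norm rescaled at spatial scale $1/N$: each additional derivative is weighted by $N^{-1}$, and this exactly cancels the $N^{+1}$-per-derivative loss incurred by the Littlewood--Paley estimate \eqref{lp-1} when pushing beyond the $m^*$ derivatives of $\psi$ that you actually control. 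In the paper's language, this is the statement that $B_l B_{l'}\le B_{l+l'}$ and that $N^{-l}B_l\lesssim A^{2-m^*}$ uniformly in $N$ for $l\ge m^*-2$. If you rewrite your argument with this correction --- the $1/N$-rescaling absorbs the $N$-losses, not any inequality between $A$ and $N$ --- the rest of your plan goes through and matches the paper's proof.
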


\begin{proof}  Abbreviating $T=T_{P_{(\le N)} \psi}$, recall the definition of the pseudoinverse
\[
T^{-1} = \frac{1}{\det(TT^*)} T^* \mathrm{adj}(T T^*)
\]
where $\mathrm{adj}(A)$ denotes the adjugate matrix of $A$.
We then need to show the bounds
\[
\left\| \nabla^l\left(\frac{1}{\det(TT^*)} T^* \mathrm{adj}(T T^*) \right) \right\|_{C^0} \lesssim_{C_0} A B_l
\]
for $0 \le l \le 2{m^*}-1$, where $B_l$ is the log-convex sequence
\[
B_l \coloneqq 
\begin{cases}
A^{-l}, & 0\le l\le {m^*}-2,\\
N^{l-{m^*}+2} A^{2-{m^*}},& l>{m^*}-2.
\end{cases}
\]

From \eqref{psi-4}, \eqref{psi-1}, and Theorem \ref{LP}(3), we have
\begin{alignat*}{4}
|\nabla^l X_i P_{(\le N)} \psi| &\lesssim_{C_0} M B_l,\quad &i=1,\cdots, k,\quad &0 \le l \le 2{m^*}-1,\\
|\nabla^l X_iX_j P_{(\le N)} \psi| &\lesssim_{C_0} A^{-1} B_l,\quad &i,j=1,\cdots, k,\quad &0 \le l \le 2{m^*}-1,\\
|\nabla^l X_{2,i'} P_{(\le N)} \psi| &\lesssim_{C_0} A^{-1} B_l,\quad &i'=1,\cdots, k_2,\quad &0 \le l \le 2{m^*}-1.
\end{alignat*}
Thus, viewing $T$ as a $\left(k+\frac{k(k+1)}{2}+k_2\right) \times D$ matrix, for any $l$-th order horizontal differential operator $W_l$ the first $k$ rows of $W_l T$ have norm $O_{C_0}(M B_l)$, and the bottom $\frac{k(k+1)}{2}+k_2$ have norm $O_{C_0}(A^{-1} B_l)$.  By the product rule, and noting that $B_l B_{l'} \le B_{l+l'}$ for all $l,l' \ge 0$ we conclude that the $\left(k+\frac{k(k+1)}{2}+k_2\right) \times \left(k+\frac{k(k+1)}{2}+k_2\right)$ matrix $W_l(TT^*)$ has top left $k \times k$ block consisting of elements of magnitude $O_{C_0}(M^2 B_l)$, top right $k\times \left(\frac{k(k+1)}{2}+k_2\right)$ and bottom left $\left(\frac{k(k+1)}{2}+k_2\right)\times k$ blocks consisting of elements of magnitude  $O_{C_0}(A^{-1} M B_l)$, and bottom right $\left(\frac{k(k+1)}{2}+k_2\right)\times \left(\frac{k(k+1)}{2}+k_2\right)$ block consisting of elements of magnitude $O_{C_0}(A^{-2} B_l)$.  By the product rule and cofactor expansion, $W_l \mathrm{adj}(TT^*)$ then has top left block of norm $O_{C_0}(M^{2k-2}A^{-k(k+1)-2k_2} B_l)$, top right and bottom left blocks of norm $O_{C_0}(M^{2k-1}A^{-k(k+1)-2k_2+1} B_l)$, and bottom right block of norm\linebreak $O_{C_0}(M^{2k}A^{-k(k+1)-2k_2+2} B_l)$. Again, by the product rule, every row of the $ D\times \left(k+\frac{k(k+1)}{2}+k_2\right)$ matrix $W_l(T^* \mathrm{adj}(TT^*))$ is of norm $O_{C_0}(M^{2k} A^{-k(k+1)-2k_2+1} B_l)$ (many are lower order than this).  

Similarly, $\nabla^l(\det(TT^*))$ has magnitude $O_{C_0}(M^{2k} A^{-k(k+1)-2k_2} B_l)$.  
Meanwhile, from \eqref{psi-1}, \linebreak\eqref{psi-3}, \eqref{psi-4}, and using \eqref{lp-5} to approximate $P_{(\le N)} \psi$ by $\psi$ up to negligible error (note that $N\ge N_0$ and our hierarchy of constants, namely that $N_0$ is chosen after $C_0$), we easily obtain the wedge product lower bound
\begin{equation}\label{N_0_hierarchy-6}
    \left|\bigwedge_{i=1}^k X_iP_{(\le N)}\psi(p)\wedge \bigwedge_{1\le i\le j\le k} X_iX_jP_{(\le N)}\psi(p)\wedge \bigwedge_{i'=1}^{k_2} X_{2,i'}P_{(\le N)}\psi(p) \right|\gtrsim_{C_0} A^{-\frac{k(k+1)}{2}-k_2}M^{k}.
\end{equation}
From this and the Cauchy--Binet formula we have the matching lower bound
\[
\det(TT^*) \gtrsim_{C_0} M^{2k} A^{-k(k+1)-2k_2}
\]
for the determinant.  Hence, by the quotient rule, the derivatives $\nabla^l(\det(TT^*)^{-1})$ have magnitude $O_{C_0}(M^{-2k} A^{k(k+1)+2k_2} B_l)$.
The claim now follows from the product rule.
\end{proof}
\begin{remark}
Of course, Lemma \ref{tpsi} can be strengthened to guarantee not only $C^{2{m^*}-1}$-regularity of $T^{-1}$ but also all higher levels of regularity. We stopped at $C^{2{m^*}-1}$ because this is only what we will need later.
\end{remark}

The rest of the proof of Proposition \ref{Perturbation} follows mostly as in \cite{tao2021embedding}, except for the proof of \eqref{G-9}. We have reproduced the argument for completeness.

First, we would need the smoothness of our proposed solution. We begin with the smoothness of the low-frequency component $\phi_{(\le N_0)}$. From the above Lemma we have the estimate
\begin{equation}\label{A_hierarchy-2}
    \| T_{P_{(\le N_0)} \psi}^{-1} \|_{C^{2{m^*}-1}} \lesssim_{C_0} A,
\end{equation}
(this uses our hierarchy of constants, namely that $A$ is chosen after $N_0$), while from \eqref{lp-1} we have
\[
\| P_{(\le N_0)} F \|_{C^{2{m^*}-1}} \lesssim_G \|F\|_{C^{2{m^*}-1}}
\]
and thus from \eqref{phi-start},
\begin{equation}\label{sim}
 \| \phi_{(\le N_0)} - \tilde \phi \|_{C^{2{m^*}-1}} \lesssim \|T^{-1}_{P_{(\le N_0)}\psi}\|_{C^{2{m^*}-1}}\|P_{(\le N_0)}F\|_{C^{2{m^*}-1}}\lesssim_{C_0} A\|F\|_{C^{2{m^*}-1}}.
 \end{equation}

Next, we establish the smoothness of the higher-frequency components $\phi_{(N)}$. From \eqref{lp-5} we have, for $N\ge N_0$ dyadic,
\begin{equation*}
 \| \nabla^m P_{(\le N)} \phi_{(<N)} \|_{C^{{m^*}+1}_{1/N}} \lesssim_G \| \nabla^m \phi_{(<N)} \|_{C^0} \le \| \phi_{(<N)} \|_{C^2},\quad m=1,2.
\end{equation*}
This implies in particular that
\[
\| X_i P_{(\le N)} \phi_{(<N)} \|_{C^{{m^*}+1}_{1/N}},\| X_iX_j P_{(\le N)} \phi_{(<N)} \|_{C^{{m^*}+1}_{1/N}},\| X_{2,i'} P_{(\le N)} \phi_{(<N)} \|_{C^{{m^*}+1}_{1/N}}  \lesssim_G \| \phi_{(<N)} \|_{C^2}.
\]
Again, from \eqref{lp-5} and \eqref{psi-4}, we also have the estimates
\begin{align}\label{P_Npsi}
\begin{aligned}
&\| {P_{(N)}} \psi \|_{C^{{m^*}+1}_{1/N}} \\
&\qquad\lesssim_G N^{-{m^*}-\alpha} \| \nabla^{{m^*}} \psi \|_{\dot C^{0,\alpha}} \lesssim  N^{-{m^*}-\alpha} A^{2-{m^*}-\alpha} \| \nabla^2 \psi \|_{C^{{m^*}-2,\alpha}_A} \lesssim_{C_0} N^{-{m^*}-\alpha} A^{1-{m^*}-\alpha}     
\end{aligned}
\end{align}
and
\begin{align*} 
\| {P_{(N)}} F \|_{C^{{m^*}+1}_{1/N}} &\lesssim_G N^{-2{m^*}+1} \| \nabla^{2{m^*}-1} F \|_{C^0} \lesssim N^{-2{m^*}+1} \| F \|_{C^{2{m^*}-1}}.
\end{align*}
Finally, from Lemma \ref{tpsi} one has
\[
\| T_{P_{(\le N)} \psi}^{-1} \|_{C^{2{m^*}-1}_{1/N}} \lesssim_{C_0} A
\]
since $A^{1-j} \lesssim A N^j$ for $0 \le j \le {m^*}-2$ and $A^{3-{m^*}} N^{j-{m^*}+2} \lesssim A N^j$ for ${m^*}-2 \le j \le 2{m^*}-1$. Inserting the above estimates into \eqref{phi-add}, we conclude that
\begin{align}\label{phi-add-estimate}
\begin{aligned}
 &\| \phi_{(N)} \|_{C^{{m^*}+1}_{1/N}} \\
&\qquad \lesssim_{\mathstrut} \|T^{-1}_{P_{(\le N)}\psi}\|_{C^{{m^*}+1}_{1/N}}\Big(\sum_{i=1}^k\|X_iP_{(\le N)}\phi_{(<N)}\|_{C^{{m^*}+1}_{1/N}}\|{P_{(N)}}\psi\|_{C^{{m^*}+1}_{1/N}}\\
&\qquad \qquad\qquad\qquad\qquad\qquad+\sum_{i,j=1}^k\|X_iX_iP_{(\le N)}\phi_{(<N)}\|_{C^{{m^*}+1}_{1/N}}\|{P_{(N)}}\psi\|_{C^{{m^*}+1}_{1/N}}\\
&\qquad \qquad\qquad\qquad\qquad\qquad\qquad+\sum_{i'=1}^{k_2}\|X_{2,i'}P_{(\le N)}\phi_{(<N)}\|_{C^{{m^*}+1}_{1/N}}\|{P_{(N)}}\psi\|_{C^{{m^*}+1}_{1/N}}  +\|{P_{(N)}} F\|_{C^{{m^*}+1}_{1/N}}\Big)\\
&\qquad \lesssim_{C_0} A \Big(A^{1-{m^*}-\alpha}N^{-{m^*}-\alpha}\|\phi_{(<N)}\|_{C^2}+N^{-2{m^*}+1}\|F\|_{C^{2{m^*}-1}}\Big)\\
&\qquad \lesssim_{C_0} A^{2-{m^*}-\alpha} N^{-{m^*}-\alpha}\|\phi_{(<N)}\|_{C^2}  + A N^{-2{m^*}+1} \| F \|_{C^{2{m^*}-1}}
\end{aligned}
\end{align}
and so
\begin{equation}\label{phi-add-estimate-2}
\| \phi_{(N)} \|_{C^{{m^*}}} \lesssim_{C_0} A^{2-{m^*}-\alpha} N^{-\alpha}\|\phi_{(<N)}\|_{C^2}  + A N^{-{m^*}+1} \| F \|_{C^{2{m^*}-1}}.    
\end{equation}
By the triangle inequality we thus have
\begin{align*}
 \| \phi_{(\le N)} - \tilde \phi \|_{C^{{m^*}}} &\le  \| \phi_{(< N)} - \tilde \phi \|_{C^{{m^*}}} + \| \phi_{(N)} \|_{C^{{m^*}}} \\ 
&\le (1 + O_{C_0}(A^{2-{m^*}-\alpha} N^{-\alpha})) \| \phi_{(<N)} - \tilde \phi \|_{C^{{m^*}}}\\
&\quad + O_{C_0}(A^{2-{m^*}-\alpha} N^{-\alpha}) \| \tilde \phi \|_{C^{{m^*},\alpha}} + O_{C_0}(A N^{-{m^*}+1}) \| F \|_{C^{2{m^*}-1}}.
\end{align*}
One can easily see by induction, with base case \eqref{sim}, that
\[
\| \phi_{(\le N)} - \tilde \phi \|_{C^{{m^*}}} \lesssim_{C_0} A\|F\|_{C^{2{m^*}-1}}+A^{2-{m^*}-\alpha} N_0^{-\alpha}\|\tilde{\phi}\|_{C^{{m^*},\alpha}} ,\quad N\ge N_0,
\]
and so by the triangle inequality, and noting that ${m^*}\ge 2$, we have
\begin{equation} \label{sim-3b}
\| \phi_{(\le N)} \|_{C^{{m^*}}} \lesssim_{C_0} A\|F\|_{C^{2{m^*}-1}}+\|\tilde{\phi}\|_{C^{{m^*},\alpha}} ,\quad N\ge N_0.
\end{equation}
Inserting this back into \eqref{phi-add-estimate} and \eqref{phi-add-estimate-2}, and noting again that ${m^*}\ge 2$, we obtain
\begin{align}\label{sim-3a}
\begin{aligned}
 \| \phi_{(N)} \|_{C^{{m^*}+1}_{1/N}} &\lesssim_{C_0} A^{2-{m^*}-\alpha} N^{-{m^*}-\alpha}\|\tilde{\phi}\|_{C^{{m^*},\alpha}}  + (A^{3-{m^*}-\alpha} N^{-{m^*}-\alpha}+AN^{-2{m^*}+1}) \| F \|_{C^{2{m^*}-1}} \\
 &\le A^{2-{m^*}-\alpha} N^{-{m^*}-\alpha}\|\tilde{\phi}\|_{C^{{m^*},\alpha}}  +  AN^{-{m^*}-\alpha} \| F \|_{C^{2{m^*}-1}}
\end{aligned}
\end{align}
and
\begin{align}\label{sim-3}
\begin{aligned}
 \| \phi_{(N)} \|_{C^{{m^*}}} \le N^{{m^*}} \| \phi_{(N)} \|_{C^{{m^*}+1}_{1/N}}\lesssim_{C_0} A^{2-{m^*}-\alpha} N^{-\alpha}\|\tilde{\phi}\|_{C^{{m^*},\alpha}}  +  A N^{-\alpha} \| F \|_{C^{2{m^*}-1}}.
\end{aligned}
\end{align}

We thus conclude that the sum
\[
\phi \coloneqq \phi_{(\le N_0)} + \sum_{N > N_0} \phi_{(N)}
\]
converges in the $C^{{m^*}}$ norm (and consequently also in the $C^2$ norm, as ${m^*}\ge 2$).

We now prove \eqref{G-8-prime}. From \eqref{sim}, it is enough to show
\[
\left\| \sum_{N > N_0} \phi_{(N)} \right\|_{C^{{m^*},\alpha}} \lesssim_{C_0} A \|F\|_{C^{2{m^*}-1}}+A^{2-{m^*}-\alpha}\|\Tilde{\phi}\|_{C^{{m^*},\alpha}}.
\]
As noted above, from \eqref{sim-3} we have
\[
\left\| \sum_{N > N_0} \phi_{(N)} \right\|_{C^{{m^*}}}\le  \sum_{N > N_0} \left\|\phi_{(N)} \right\|_{C^{{m^*}}} \lesssim_{C_0} AN_0^{-\alpha} \|F\|_{C^{2{m^*}-1}}+A^{2-{m^*}-\alpha}N_0^{-\alpha}\|\Tilde{\phi}\|_{C^{{m^*},\alpha}},
\]
so it remains to show H\"older regularity:
\begin{equation}\label{hold}
\left|\nabla^{{m^*}} \sum_{N > N_0} \phi_{(N)}(p) - \nabla^{{m^*}} \sum_{N > N_0} \phi_{(N)}(q)\right| \lesssim_{C_0} (A \|F\|_{C^{2{m^*}-1}}+A^{2-{m^*}-\alpha}\|\Tilde{\phi}\|_{C^{{m^*},\alpha}}) d(p,q)^\alpha
\end{equation}
for any $p,q \in G$.  By the triangle inequality, it is enough to show
\begin{equation*}
 \sum_{N > N_0}\left|\nabla^{{m^*}} \phi_{(N)}(p) - \nabla^{{m^*}}  \phi_{(N)}(q)\right| \lesssim_{C_0} (A \|F\|_{C^{2{m^*}-1}}+A^{2-{m^*}-\alpha}\|\Tilde{\phi}\|_{C^{{m^*},\alpha}}) d(p,q)^\alpha
\end{equation*}
On one hand, we may bound 
\begin{align*}
|\nabla^{{m^*}} \phi_{(N)}(p) - \nabla^{{m^*}} \phi_{(N)}(q)| &\lesssim \| \nabla^{{m^*}} \phi_{(N)} \|_{C^0} \lesssim \| \phi_{(N)} \|_{C^{{m^*}}} \\
&\stackrel{\mathclap{\eqref{sim-3}}}{\lesssim}_{C_0} A^{2-{m^*}-\alpha}N^{-\alpha}\|\tilde{\phi}\|_{C^{{m^*},\alpha}} + A N^{-\alpha}\|F\|_{C^{2{m^*}-1}}\\
&= (A^{2-{m^*}-\alpha}\|\tilde{\phi}\|_{C^{{m^*},\alpha}} + A\|F\|_{C^{2{m^*}-1}})N^{-\alpha}.
\end{align*}
On the other hand, one has
\begin{align*}
|\nabla^{{m^*}} \phi_{(N)}(p) - \nabla^{{m^*}} \phi_{(N)}(q)| &\lesssim \| \nabla^{{m^*}+1} \phi_{(N)} \|_{C^0} d(p,q)\lesssim N^{{m^*}+1} \| \phi_{(N)} \|_{C^{{m^*}+1}_{1/N}} d(p,q)\\
&\stackrel{\mathclap{\eqref{sim-3a}}}{\lesssim }_{C_0} (A^{2-{m^*}-\alpha}N^{-\alpha}\|\tilde{\phi}\|_{C^{{m^*},\alpha}} + A N^{-\alpha}\|F\|_{C^{2{m^*}-1}})(N d(p,q))\\
&\lesssim_{C_0}  (A^{2-{m^*}-\alpha}\|\tilde{\phi}\|_{C^{{m^*},\alpha}} + A\|F\|_{C^{2{m^*}-1}}) N^{-\alpha}(N d(p,q)).
\end{align*}
Thus, the left-hand side of \eqref{hold} is bounded by
\[
\lesssim_{C_0} (A^{2-{m^*}-\alpha}\|\tilde{\phi}\|_{C^{{m^*},\alpha}} + A\|F\|_{C^{2{m^*}-1}}) \sum_N N^{-\alpha} \min(1, N d(p,q))
\]
and the claim \eqref{G-8-prime} follows by summing the double-ended geometric series
\[
\sum_N N^{-\alpha} \min(1, N d(p,q))
\]
using the hypothesis $0 < \alpha < 1$.

Now we prove \eqref{phi-bounds}.  As $\phi_{(\le N)}$ converges in $C^2$ to $\phi$ as $N \to \infty$, and $P_{(\le N)} \psi$ converges in $C^2$ to $\psi$, we may write $B(\phi,\psi)$ as the uniform limit of $B(\phi_{(\le N)}, P_{(\le N)} \psi)$. Using \eqref{b-ident-1} and \eqref{b-ident-2}, we have the telescoping sum
\begin{align*}
B(\phi,\psi) &= B(\phi_{(\le N_0)}, P_{(\le N_0)} \psi) + \sum_{N > N_0} (B(\phi_{(\le N)}, P_{(\le N)} \psi) - B(\phi_{(< N)}, P_{(< N)} \psi)) \\
&=   P_{(\le N_0)} F + \sum_{N > N_0} (B(\phi_{(N)}, P_{(\le N)} \psi) + B(\phi_{(<N)}, {P_{(N)}} \psi))\\
&=   P_{(\le N_0)} F + \sum_{N > N_0} (P_{(N)} F + B(P_{(>N)}\phi_{(<N)}, {P_{(N)}} \psi))\\
&= F+\sum_{N > N_0}  B(P_{(>N)}\phi_{(<N)}, {P_{(N)}} \psi),
\end{align*}
or
\[
B(\phi,\psi) - F = \sum_{N>N_0} B(P_{(>N)}\phi_{(<N)}, {P_{(N)}} \psi).
\]
Each of the terms in the right-hand side, being a ``high-high paraproduct'' of $\nabla \psi$ and $\nabla \phi$, has much higher regularity ($C^{2{m^*}-1}$) than either $\nabla \psi$ or $\nabla \phi$ ($C^{{m^*}-1}$). Indeed, by the triangle inequality and product rule, we have
\begin{align*}
\| B(\phi,\psi) - F\|_{C^{2{m^*}-1}} &\le \sum_{N>N_0} \| B(P_{(\ge N)}\phi_{(<N)}, {P_{(N)}} \psi) \|_{C^{2{m^*}-1}}\\
&\lesssim \sum_{N>N_0} \sum_{j_1+j_2 = 2{m^*}-1} \| \nabla P_{(>N)} \phi_{(<N)} \|_{C^{j_1}}\| \nabla {P_{(N)}} \psi \|_{C^{j_2}}.    
\end{align*}
For any $0 \le j_1 \le 2{m^*}-1$, one has from \eqref{lp-5} and \eqref{sim-3} that
\begin{align*}
\| \nabla P_{(>N)} \phi_{(<N)} \|_{C^{j_1}} &\lesssim N^{j_1+1} \|  P_{(>N)} \phi_{(<N)} \|_{C^{j_1+1}_{1/N}} \\
&\lesssim_G N^{j_1 +1-{m^*}} \| \nabla^{{m^*}} \phi_{(<N)} \|_{C^0} \\
&\lesssim_{C_0} N^{j_1 +1-{m^*}} (A^{2-{m^*}-\alpha} N^{-\alpha}\|\tilde{\phi}\|_{C^{{m^*},\alpha}}  +  AN^{-\alpha} \| F \|_{C^{2{m^*}-1}})\\
&= N^{j_1 +1-{m^*}-\alpha} (A^{2-{m^*}-\alpha} \|\tilde{\phi}\|_{C^{{m^*},\alpha}}  +  A\| F \|_{C^{2{m^*}-1}}).
\end{align*}
Also, for any $0 \le j_2 \le 2{m^*}-1$, we have from \eqref{lp-5} and \eqref{psi-4} that 
\begin{align*}
\| \nabla {P_{(N)}} \psi \|_{C^{j_2}} &\lesssim N^{j_2+1} \| {P_{(N)}} \psi \|_{C^{j_2+1}_{1/N}} \lesssim_{G} N^{j_2+1} N^{-{m^*}-\alpha} \| \nabla^{{m^*}} \psi \|_{\dot C^{0,\alpha}}\lesssim_{C_0} N^{j_2+1-{m^*}-\alpha} A^{1-{m^*}-\alpha},
\end{align*}
and thus
\begin{align*}
&\| B(\phi,\psi) - F\|_{C^{2{m^*}-1}} \\
&\qquad\lesssim_{C_0}\sum_{N>N_0}\sum_{j_1+j_2=2{m^*}-1} N^{j_1 +1-{m^*}-\alpha}N^{j_2+1-{m^*}-\alpha} A^{1-{m^*}-\alpha} (A^{2-{m^*}-\alpha} \|\tilde{\phi}\|_{C^{{m^*},\alpha}}  +  A \| F \|_{C^{2{m^*}-1}})\\
&\qquad\lesssim_{C_0}\sum_{N>N_0} N^{1-2\alpha} A^{1-{m^*}-\alpha} (A^{2-{m^*}-\alpha} \|\tilde{\phi}\|_{C^{{m^*},\alpha}}  +  A \| F \|_{C^{2{m^*}-1}})\\
&\qquad\lesssim_{C_0} N_0^{1-2\alpha} A^{1-{m^*}-\alpha} (A^{2-{m^*}-\alpha} \|\tilde{\phi}\|_{C^{{m^*},\alpha}}  +  A \| F \|_{C^{2{m^*}-1}})
\end{align*}
(we used $\frac 12<\alpha<1$) which gives \eqref{phi-bounds}.

Finally, we prove \eqref{G-9-prime}. Fix $i,j=1,\cdots,k$. We need to establish
\[
\|X_i\phi\cdot X_j\psi-X_i\Tilde{\phi}\cdot X_jP_{(\le N_0)}\psi\|_{C^0}\lesssim_{C_0} \|F\|_{C^{2{m^*}-1}}+N_0^{1-{m^*}-\alpha}A^{1-{m^*}-\alpha}\|\Tilde{\phi}\|_{C^{{m^*},\alpha}}.
\]
Using telescoping sums,
\begin{align*}
    &X_i\phi\cdot X_j\psi-X_i\Tilde{\phi}\cdot X_jP_{(\le N_0)}\psi\\
    &\qquad\qquad\qquad=~ X_i(\phi_{(\le N_0)}-\tilde \phi)\cdot X_jP_{(\le N_0)} \psi+\sum_{N>N_0}\left( X_i\phi_{(\le N)}\cdot X_jP_{(\le N)}\psi-X_i\phi_{(<N)}\cdot X_jP_{(< N)}\psi\right)\\
    &\qquad\qquad\qquad\stackrel{\mathclap{\eqref{low-freq-cross-term}}}{=}~ P_{(\le N_0)} F_{ij}+\sum_{N>N_0}\left(X_i \phi_{(N)}\cdot X_j P_{(\le N)} \psi+X_i \phi_{(<N)}\cdot X_j {P_{(N)}} \psi\right)\\
    &\qquad\qquad\qquad\stackrel{\mathclap{\eqref{high-freq-cross-term}}}{=}~ P_{(\le N_0)} F_{ij}+\sum_{N>N_0}{P_{(N)}} F_{ij}+\sum_{N>N_0}\left(-X_jP_{(\le N)} \phi_{(<N)}\cdot X_i {P_{(N)}} \psi+X_i \phi_{(<N)}\cdot X_j {P_{(N)}} \psi\right)\\
    &\qquad\qquad\qquad=~F+\sum_{N>N_0}\left(-X_jP_{(\le N)} \phi_{(<N)}\cdot X_i {P_{(N)}} \psi+X_i \phi_{(<N)}\cdot X_j {P_{(N)}} \psi\right).
\end{align*}
Thus
\begin{align*}
    \|X_i\phi\cdot X_j\psi-X_i\Tilde{\phi}\cdot X_jP_{(\le N_0)}\psi\|_{C^0}&\le \|F\|_{C^0}+\sum_{N>N_0}\left(\|\nabla P_{(\le N)} \phi_{(<N)}\|_{C^0}+\|\nabla \phi_{(<N)}\|_{C^0} \right)\|\nabla {P_{(N)}} \psi\|_{C^0}\\
    &\stackrel{\mathclap{\eqref{lp-1}}}{\lesssim}_{G}\|F\|_{C^0}+\sum_{N>N_0}\|\nabla \phi_{(<N)}\|_{C^0}\|\nabla {P_{(N)}} \psi\|_{C^0}\\
    &\stackrel{\mathclap{\eqref{sim-3b}, \eqref{P_Npsi}}}{\lesssim}_{C_0}\|F\|_{C^0}+\sum_{N>N_0}\left(A\|F\|_{C^{2{m^*}-1}}+\|\tilde{\phi}\|_{C^{{m^*},\alpha}}\right)N^{1-{m^*}-\alpha} A^{1-{m^*}-\alpha}\\
    &\lesssim \|F\|_{C^{2{m^*}-1}}+N_0^{1-{m^*}-\alpha} A^{1-{m^*}-\alpha}\|\tilde{\phi}\|_{C^{{m^*},\alpha}},
\end{align*}
as desired.
\end{proof}

We state Proposition \ref{Perturbation} for the case $F=0$, which is the form that will be used later. (We remark that when converting \eqref{G-9} into \eqref{G-9-again} below, we have used our hierarchy of choosing $N_0$ after $C_0$.)
\begin{corollary}\label{perturbation-cor}
Let $M$ be a real number with
\[
M\ge C_0^{-1}.
\]
Let ${m^*}\ge 2$ and $\frac 12<\alpha<1$. Suppose we are given a $C^{{m^*},\alpha}$-map $\psi:G\to \bbr^{D}$ with the following regularity properties:
\begin{enumerate}[leftmargin=*]
    \item (H\"older regularity at scale $A$) We have
    \begin{equation*}
        \|\nabla^2 \psi\|_{C_A^{{m^*}-2,\alpha}}\le C_0A^{-1}.
    \end{equation*}
    \item (nondegenerate first derivatives) For any $p\in G$, we have
    \begin{equation*}
        C_0^{-1}M\le |X_i\psi(p)|\le C_0 M,\quad i=1,\cdots,k,
    \end{equation*}
    \item (locally free embedding) For any $p\in G$, we have
    \begin{equation*}
        \left|\bigwedge_{i=1}^k X_i\psi(p)\wedge \bigwedge_{1\le i\le j\le k} X_iX_j\psi(p)\wedge \bigwedge_{i'=1}^{k_2} X_{2,i'}\psi(p) \right|\gtrsim_{C_0} A^{-\frac{k(k+1)}{2}-k_2}M^{k}.
    \end{equation*}
\end{enumerate}

Let $\Tilde{\phi}:G\to\bbr^{D}$ be a $C^{{m^*},\alpha}$-solution to the low-frequency equation \eqref{G-5} with $\|\Tilde{\phi}\|_{C^{{m^*},\alpha}}<\infty$. Then there exists a $C^{{m^*},\alpha}$-solution $\phi$ to \eqref{G-1} which is a small perturbation of $\tilde{\phi}$:
\begin{equation}\label{G-8-again}
    \|\phi-\Tilde{\phi}\|_{C^{{m^*},\alpha}}\lesssim_{C_0} A^{2-{m^*}}\|\Tilde{\phi}\|_{C^{{m^*},\alpha}},
\end{equation}
and which makes the following cross terms small:
\begin{equation}\label{G-9-again}
    \|X_i\phi\cdot X_j\psi-X_i\Tilde{\phi}\cdot X_jP_{(\le N_0)}\psi\|_{C^0}\le A^{1-{m^*}}\|\Tilde{\phi}\|_{C^{{m^*},\alpha}},\quad i,j=1,\cdots,k.
\end{equation}
\end{corollary}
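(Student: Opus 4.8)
The plan is to deduce Corollary~\ref{perturbation-cor} directly from Proposition~\ref{Perturbation} by specializing to the case $F=0$ and then simplifying the resulting error estimates using the hierarchy of constants. First I would set $F\equiv 0$ in Proposition~\ref{Perturbation}; then $\|F\|_{C^{2m^*-1}}=0$, so the conclusion of the Proposition immediately yields a $C^{m^*,\alpha}$-solution $\phi$ to $B(\phi,\psi)=0$ (which is exactly \eqref{G-1} with $F=0$) satisfying
\[
\|\phi-\tilde\phi\|_{C^{m^*,\alpha}}\lesssim_{C_0} A^{2-m^*-\alpha}\|\tilde\phi\|_{C^{m^*,\alpha}}
\]
and
\[
\|X_i\phi\cdot X_j\psi-X_i\tilde\phi\cdot X_jP_{(\le N_0)}\psi\|_{C^0}\lesssim_{C_0} N_0^{1-m^*-\alpha}A^{1-m^*-\alpha}\|\tilde\phi\|_{C^{m^*,\alpha}},\quad i,j=1,\dots,k.
\]
The only remaining task is to absorb the implied $O_{C_0}(1)$ constants and the negative powers of $N_0$ and of $\alpha$-exponents into clean bounds $A^{2-m^*}\|\tilde\phi\|_{C^{m^*,\alpha}}$ and $A^{1-m^*}\|\tilde\phi\|_{C^{m^*,\alpha}}$ respectively.

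For \eqref{G-8-again}, since $\alpha\in(\tfrac12,1)$ we have $A^{2-m^*-\alpha}\le A^{2-m^*}A^{-\alpha}$; choosing $A$ large enough (depending on $C_0$, $m^*$, $\alpha$) so that the constant $O_{C_0}(1)$ times $A^{-\alpha}$ is at most $1$ gives $\|\phi-\tilde\phi\|_{C^{m^*,\alpha}}\le A^{2-m^*}\|\tilde\phi\|_{C^{m^*,\alpha}}$; this is legitimate because $A$ is chosen last in the hierarchy of constants, after $C_0$ and $N_0$. For \eqref{G-9-again}, I would use the factor $N_0^{1-m^*-\alpha}$: since $m^*\ge 2$ and $\alpha>\tfrac12$, the exponent $1-m^*-\alpha$ is negative, so $N_0^{1-m^*-\alpha}\to 0$ as $N_0\to\infty$; choosing $N_0$ large enough (depending on $C_0$, $m^*$, $\alpha$) so that $O_{C_0}(1)\cdot N_0^{1-m^*-\alpha}\le 1$, and then noting $A^{1-m^*-\alpha}\le A^{1-m^*}$ since $\alpha>0$ and $A\ge 1$, yields $\|X_i\phi\cdot X_j\psi-X_i\tilde\phi\cdot X_jP_{(\le N_0)}\psi\|_{C^0}\le A^{1-m^*}\|\tilde\phi\|_{C^{m^*,\alpha}}$; this is the step referred to in the parenthetical remark before the corollary, that $N_0$ is chosen after $C_0$.

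There is essentially no obstacle here — the corollary is a bookkeeping restatement of the Proposition — but the one point requiring mild care is making sure the order of quantifiers in the hierarchy of constants is respected: the constant $C_0$ is fixed first, then $N_0$ is taken large enough to kill the $N_0^{1-m^*-\alpha}$ factor against the $C_0$-dependent constant in \eqref{G-9}, and finally $A$ is taken large enough to kill the residual $A^{-\alpha}$ slack against the (now fixed) $C_0$- and $N_0$-dependent constants in both \eqref{G-8} and \eqref{G-9}. Since $\tilde\phi$ is assumed to solve the low-frequency equation \eqref{G-5} with finite $C^{m^*,\alpha}$-norm and $F=0$ trivially has finite $C^{2m^*-1}$-norm, all hypotheses of Proposition~\ref{Perturbation} are met, and the proof is complete.
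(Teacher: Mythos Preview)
Your proposal is correct and follows exactly the paper's approach: specialize Proposition~\ref{Perturbation} to $F=0$ and absorb the residual constants via the hierarchy (in particular choosing $N_0$ after $C_0$ to obtain the clean bound \eqref{G-9-again}). One small remark: since \eqref{G-8-again} only asks for $\lesssim_{C_0}$ rather than $\le$, you do not actually need the $A$-hierarchy there---the inequality $A^{2-m^*-\alpha}\le A^{2-m^*}$ is immediate from $A\ge 1$ and $\alpha>0$---but your slightly stronger argument is of course still valid.
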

\begin{remark}
\begin{enumerate}[leftmargin=*]
    \item To repeat, we will later on use $\alpha=\frac 23$ and ${m^*}=s^2+s+1$, where $G$ is of step $s$, so these are to be considered ``geometric constants''. Following our hierarchy of constants, we will be choosing $C_0$, $N_0$, and $A$, in that order.
    \item At first glance, Corollary \ref{perturbation-cor} may seem to be missing a necessary dependence on $N_0$: the solution $\Tilde{\phi}$ to \eqref{G-5} approximates the solution $\phi$ to \eqref{G-1}, with the approximation getting better as $N_0$ gets larger, yet the quantitative estimates \eqref{G-8-again} and \eqref{G-9-again} lack any explicit dependence on $N_0$. However, the dependence is implicit: \eqref{G-8-again} and \eqref{G-9-again} depend on $A$ which in turn depends on $N_0$, and as $N_0$ gets larger, the right-hand sides of \eqref{G-8-again} and \eqref{G-9-again} get smaller. The part when we needed the dependence of $A$ on $N_0$ in the proof of Proposition \ref{Perturbation} was in \eqref{A_hierarchy-2}.
\end{enumerate}
\end{remark}


\section{Regular extensions of orthonormal systems}\label{sec:ON-ext}
\setcounter{equation}{0}

Another tool that we will need is a certain result on extending orthonormal systems. In order to apply Corollary \ref{perturbation-cor}, we first need to construct a nontrivial solution $\tilde{\phi}$ to the low-frequency equation \eqref{G-5}, or more generally the stronger equation
\begin{equation}\label{lowfreq-strongperp}
    X_i\tilde{\phi}\cdot X_j P_{(\le N_0)}\psi =0,\quad i,j=1,\cdots,k
\end{equation}
as promised in Remark \ref{perturb-rem}. One can see using the Leibniz rule that it is enough to solve the system
\begin{equation}\label{pre-strongperp}
\begin{cases}
\tilde{\phi}\cdot X_i P_{(\le N_0)}\psi =0,& i=1,\cdots,k,\\
\tilde{\phi}\cdot X_iX_j P_{(\le N_0)}\psi =0,& i,j=1,\cdots,k.
\end{cases}
\end{equation}
However, the vectors $\{X_iX_j P_{(\le N_0)}\psi\}_{i,j=1,\cdots,k}$ may not be linearly independent, as $\{X_iX_j-X_jX_i\in V_2:1\le i<j\le k\}$ may be linearly dependent, so the above system \eqref{pre-strongperp} may be overdetermined. Instead, we will solve the equivalent system
\begin{equation}\label{strongperp}
    \begin{cases}
\tilde{\phi}\cdot X_i P_{(\le N_0)}\psi =0,& i=1,\cdots,k,\\
\tilde{\phi}\cdot X_iX_j P_{(\le N_0)}\psi =0,& i,j=1,\cdots,k,~i\le j,\\
\tilde{\phi}\cdot X_{2,i} P_{(\le N_0)}\psi =0,& i=1,\cdots,k_2,
\end{cases}
\end{equation}
where we recall that $\{X_{2,i}\}_{i=1}^{k_2}$ is a basis of $V_2$ (the equivalence of \eqref{pre-strongperp} and \eqref{strongperp} follows from $[V_1,V_1]=V_2$). The vectors $\{X_i P_{(\le N_0)}\psi\}_{i=1,\cdots,k}\cup\{X_iX_j P_{(\le N_0)}\psi\}_{1\le i\le j\le k}\cup \{X_{2,i} P_{(\le N_0)}\psi\}_{i=1,\cdots,k_2}$ can be made linearly independent if we require the stronger freeness property
\begin{equation}\label{freeness-depth2}
\left|\bigwedge_{i=1}^k X_i\psi\wedge \bigwedge_{1\le i\le j\le k}X_iX_j\psi\wedge \bigwedge_{i=1}^{k_2}X_{2,i}\psi\right|\gtrsim_{C_0}\prod_{i=1}^k |X_i\psi|\cdot \prod_{1\le i\le j\le k}|X_iX_j\psi|\cdot \prod_{i=1}^{k_2}|X_{2,i}\psi|
\end{equation}
along with some regularity, say $\|\nabla^2\psi\|_{C_A^1}\lesssim_{C_0} A^{-1}$, for then we can apply \eqref{lp-5} to approximate the derivatives of $P_{(\le N_0)}\psi$ by the corresponding derivatives of $\psi$.

Now suppose $v_1,\cdots,v_{k(k+1)/2+k_2}$ is the result of applying the Gram-Schmidt process to the vectors $\{X_i P_{(\le N_0)}\psi\}_{i=1,\cdots,k}\cup\{X_iX_j P_{(\le N_0)}\psi\}_{1\le i\le j\le k}\cup \{X_{2,i} P_{(\le N_0)}\psi\}_{i=1,\cdots,k_2}$. The freeness property \eqref{freeness-depth2} guarantees some regularity of the $v_i$, and solving \eqref{strongperp} is equivalent to solving
\begin{equation}\label{perp-motiv}
    \tilde{\phi}\cdot v_i=0,\quad i=1,\cdots, \frac{k(k+1)}{2}+k_2.
\end{equation}

Thus, the following general question arises:
\begin{question}\label{general-lift}
Given a space $X$ on which a function space $(\mathcal{F}(X),\|\cdot\|_\mathcal{F})$ is defined, and given maps $v_1,\cdots,v_m:X\to\bbs^{D-1}$, $D\ge m+1$, which form a pointwise orthonormal system and which have the uniform regularity bound $\|v_i\|_\mathcal{F}\lesssim 1$, when can we extend the system to include a new map $v_{m+1}:X\to\bbs^{D-1}$, such that $v_1,\cdots,v_m,v_{m+1}$ forms a pointwise orthonormal system and $\|v_{m+1}\|_\mathcal{F}\lesssim_{X,\mathcal{F}(X),m,D}1$?
\end{question}

We will provide partial positive answers to this question that are applicable to our construction of the embedding. Theorem \ref{Cj-lifting-lattice} is a previous positive answer from \cite{tao2021embedding}, while Theorems \ref{Lip-lifting}, \ref{Cj-lifting}, and \ref{partialpositiveanswer} and Corollary \ref{multiple-Cj-lifting} are the new positive answers of this paper.

Under conditions that validate the above question, we would be able to simply take $\tilde{\phi}$ to be $v_{m+1}$ to solve \eqref{perp-motiv}, for then $\tilde{\phi}$ would have regularity similar to that of the $v_i$; i.e., it would have bounded $C^{{m^*},\alpha}$ norm (there is some loss of constant factors when applying \eqref{lp-1}, but this is offset by the fact that there is a change of scale: we have control on the $C_A^{{m^*},\alpha}$ norm of $\psi$, and we need only control the $C^{{m^*},\alpha}$ norm of $\tilde{\phi}$).

Actually, in order to also obtain a freeness property for $\psi+\phi$, we will take $\tilde{\phi}$ to be a linear combination of a larger extension $v_{m+1},\cdots,v_{m+m'}$ of $v_1,\cdots,v_m$ with variable coefficients; the coefficients will guarantee the freeness property in this case (see \eqref{IsomCompose} and \eqref{decomp}). Obtaining a larger orthonormal extension will be possible simply by adding new vectors one by one.

In \cite[Section 8]{tao2021embedding} Question \ref{general-lift} has been answered in the affirmative for the case $X=\bbh^3$ and
\[
\|\phi\|_\mathcal{F}=\|\phi\|_{C^0}+R\|\nabla \phi\|_{C^j}
\]
for any $j\ge 0$ and $R\ge 1$. The proof in \cite{tao2021embedding} used the fact that the Heisenberg group $\bbh^3$ admits a CW complex structure that is periodic with respect to its standard discrete cocompact lattice. The methods of \cite{tao2021embedding} can be generalized in a straightforward manner to prove the following theorem.

\begin{theorem}[{\cite[Corollary 8.4]{tao2021embedding}}]\label{Cj-lifting-lattice}
Let $G$ be a Carnot group that admits a cocompact lattice $\Gamma$ and a CW structure whose cells can be obtained from left $\Gamma$-translation from a finite list of cells. Let $1\le m\le D-n-1$ (where $n$ is the topological dimension of $G$), $j\ge 1$, and let $\{R_i\}_{i=1}^j$ be a log-concave sequence of positive reals, i.e., $R_iR_{i'}\ge R_{i+i'}$ whenever $i+i'\le j$. Let $v_1,\cdots,v_m:G\to \bbs^{D-1}$ be functions that form an orthonormal system at each point, with the uniform regularity bound
\[
\sum_{k=1}^j R_k\|\nabla^k v_i\|_{C^0}\le 1,\quad i=1,\cdots, m.
\]
Then there exists another function $v_{m+1}:G\to \bbs^{D-1}$ such that $v_1,\cdots,v_m$ along with $v_{m+1}$ form an orthonormal system at each point, and
\begin{equation}\label{controlled-norms-lattice}
\sum_{k=1}^j R_k\|\nabla^k v_{m+1}\|_{C^0}\lesssim_{G,D,j} 1.
\end{equation}
\end{theorem}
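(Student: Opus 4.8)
The plan is to recast the task as extending a section of a sphere bundle over $G$, to build that section by induction over the skeleta of the given $\Gamma$-periodic CW structure, and to fill in each cell by a \emph{quantitative} nullhomotopy whose output is controlled purely by its boundary data; the $\Gamma$-periodicity then upgrades the per-cell bounds to a global one. As a preliminary reduction, it suffices to produce $v_{m+1}:G\to\bbs^{D-1}$ that is pointwise orthonormal to $v_1,\dots,v_m$ and has a uniformly bounded Lipschitz norm at a length scale dictated by $\{R_i\}$. Given such a $v_{m+1}$ one mollifies it at that scale, projects the result pointwise onto $\{v_1(p),\dots,v_m(p)\}^{\perp}$, and renormalizes; since the $v_i$ are orthonormal with controlled derivatives, this projection and renormalization are controlled operations (cf.\ \eqref{unit-alg}), and the log-concavity of $\{R_i\}_{i=1}^{j}$ is precisely what lets a single mollification scale recover all of the weighted bounds in \eqref{controlled-norms-lattice} simultaneously, in the same way it governs the algebra of the norms $\|\cdot\|_{C^{m}_{\{R_j\}}}$ of Section \ref{sec:funcspace}. (Alternatively one first applies a dilation $\delta_\lambda$, which rescales $\nabla^k$-norms by $\lambda^k$ and only conjugates $\Gamma$ and its CW structure by an automorphism, to reduce to a single convenient scale.) So from now on I only need a uniformly Lipschitz $v_{m+1}$.

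The orthonormal frame $(v_1,\dots,v_m)$ selects at each $p\in G$ the $(D-m)$-plane $W_p:=\{v_1(p),\dots,v_m(p)\}^{\perp}\subset\bbr^D$, and I must choose a section $p\mapsto v_{m+1}(p)$ of the unit-sphere bundle of $W:=\bigsqcup_{p}W_p$. After replacing the CW structure by a $\Gamma$-equivariant subdivision (still with finitely many $\Gamma$-orbits of cells) I may assume every cell $\sigma$ is small; over such a $\sigma$, which is a disk, I complete $(v_1,\dots,v_m)$ to a full orthonormal frame of $\bbr^D$ by Gram--Schmidt against a suitable fixed background frame — a controlled operation, since the $v_i$ are orthonormal with controlled derivatives and $\sigma$ is small — thereby trivializing $W|_\sigma\cong\sigma\times\bbr^{D-m}$ with uniformly bounded transition data, under which a section of the unit-sphere bundle becomes a map $\sigma\to\bbs^{D-m-1}$. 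I then construct $v_{m+1}$ by induction on the dimension $\ell$ of the skeleton. On $0$-cells, pick any unit vector of $W_p$. Assuming $v_{m+1}$ is defined with controlled Lipschitz norm on the $(\ell-1)$-skeleton, I extend over each $\ell$-cell $\sigma$: reading the already-fixed values through the trivialization over $\sigma$ gives a Lipschitz map $\partial\sigma\cong\bbs^{\ell-1}\to\bbs^{D-m-1}$, and since $\ell\le\dim G=n\le D-m-1$ — this is where the hypothesis $m\le D-n-1$ enters — we have $\ell-1<D-m-1$, so $\pi_{\ell-1}(\bbs^{D-m-1})=0$ and this map is nullhomotopic. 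A quantitative nullhomotopy fills it with a map $\sigma\to\bbs^{D-m-1}$ of Lipschitz norm $\lesssim_{G,D}$ that of the boundary data, and transporting this back through the trivialization defines $v_{m+1}$ on $\sigma$ with controlled norm. Consistency along shared faces is automatic because the values on the $(\ell-1)$-skeleton are frozen before any $\ell$-cell is filled. Finally, since the cells fall into finitely many $\Gamma$-orbits and left translation by $\Gamma$ is an isometry of $(G,d_G)$ preserving the hypotheses on the $v_i$ (the quantities $\|\nabla^k v_i\|_{C^0}$ are left-invariant), the bound obtained on $v_{m+1}$ over any cell depends only on the inductively controlled boundary data and on finitely many cell-shapes, hence is uniform over all of $G$; this yields \eqref{controlled-norms-lattice}.

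The heart of the argument — and the step I expect to be the main obstacle — is the quantitative nullhomotopy used above: a Lipschitz map $f:\bbs^{\ell-1}\to\bbs^{D-m-1}$ with $\ell-1<D-m-1$ must be filled by a map of the $\ell$-disk into $\bbs^{D-m-1}$ whose Lipschitz constant is bounded in terms of $\operatorname{Lip}(f)$ \emph{alone}, uniformly over all such $f$. The mechanism is a dimension count: after a controlled smoothing $f$ may be taken smooth, and then by Sard's theorem it avoids some point $q\in\bbs^{D-m-1}$ with a margin depending only on $\operatorname{Lip}(f)$; its image therefore lies in $\bbs^{D-m-1}\setminus\{q\}$, which is bi-Lipschitz — with constants controlled by that margin — to $\bbr^{D-m-1}$, where one cones off with linear control before mapping back. (If one prefers to work directly in $C^j$ rather than reducing to Lipschitz, the same idea, or the quantitative homotopy machinery of \cite{tao2018embedding}, supplies a $C^j$-controlled filling.) This is the only genuinely topological ingredient; every other step — the Gram--Schmidt completions, the pointwise projections and normalizations, the mollification, and the handling of the log-concave weights — is soft and of the sort already recorded in Sections \ref{sec:linalg}--\ref{sec:funcspace}.
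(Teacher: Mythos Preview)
Your proposal is correct and follows essentially the same approach as the paper describes (which is Tao's approach from \cite{tao2018embedding}): reduce to producing a uniformly Lipschitz section via mollification and Gram--Schmidt, then build that section by induction on the skeleta of the $\Gamma$-periodic CW structure, filling each $\ell$-cell by a quantitative nullhomotopy using $\pi_{\ell-1}(\bbs^{D-m-1})=0$ (which is where $m\le D-n-1$ enters), with the finiteness of $\Gamma$-orbits of cells supplying global uniformity. Your identification of the quantitative nullhomotopy as the crux, and the Sard-based mechanism you sketch for it, are exactly the point.
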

In other words, we are given a bundle $B$ over $G$, where for each $p\in G$ the fiber of $B$ over $p$ is the collection of unit vectors $v\in \bbs^{D-1}$ which are perpendicular to $v_1(p),\cdots,v_m(p)$ (so each fibre is homeomorphic to $\bbs^{D-m-1}$), and we need to show that there is a section of this bundle which has the same level of control on the regularity of the bundle itself. Tao \cite[Section 8]{tao2021embedding} achieved this for the Heisenberg group $\bbh^3$ in the spirit of quantitative topology, by imposing a ``uniform'' CW-structure on $\bbh^3$ as above and then inductively constructing the section starting from low-dimensional skeleta. In the inductive step in \cite{tao2021embedding}, one has to use the fact that the homotopy groups $\pi_i(\bbs^n)$ vanish for $i<n$, which necessitates the ``dimension gap'' $m\le D-3-1$.

The main difficulty in proving Theorem \ref{Cj-lifting-lattice} is to construct a section $\tilde{v}_{m+1}$ which is uniformly continuous, with the modulus of uniform continuity depending only on $G$, $D$ and $R_1$. We can then obtain a section $v_{m+1}$ with the stronger regularity property \eqref{controlled-norms-lattice} simply by mollifying $\tilde{v}_{m+1}$ and then applying the Gram-Schmidt orthogonalization process; the regularity \eqref{controlled-norms-lattice} will simply be a consequence of the algebra property for norms of the form $\|\cdot\|_{C^0}+\sum_{k=1}^j R_k\|\nabla^k \cdot\|_{C^0}$.

General Carnot groups may not admit a cocompact lattice (as the structure constants for any basis may be irrational), and it is not clear whether $G$ admits a ``uniform'' CW structure that is amenable to the above proof method. We will avoid the need for a CW structure by only using the fact that $G$ is a doubling metric space. More precisely, we will first prove that the most challenging part of Theorem \ref{Cj-lifting-lattice}, i.e., the case $j=1$, can be done in the setting of doubling metric spaces. We state this result separately in anticipation of future work.

\begin{theorem}\label{Lip-lifting}
Let $(X,d)$ be a $K$-doubling metric space ($K\ge 2$), and let $m\le D-224 K^4\log K$. If $v_1,\cdots,v_m:X\to\bbs^{D-1}$ are 1-Lipschitz functions that form an orthonormal system at each point, then there exists a $150K^5m(m+1)$-Lipschitz function $v_{m+1}:X\to \bbs^{D-1}$ such that $v_1,\cdots,v_m$ along with $v_{m+1}$ form an orthonormal system at each point.
\end{theorem}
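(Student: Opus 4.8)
The plan is to reduce the construction of $v_{m+1}$ to producing an auxiliary map $u\colon X\to\bbr^D$ that is Lipschitz, vanishes nowhere, and is pointwise orthogonal to $v_1,\dots,v_m$; then $v_{m+1}\coloneqq u/|u|$ is a pointwise unit vector orthogonal to the $v_i$, and its Lipschitz norm is controlled by the quotient estimate \eqref{unit-alg}. Write $V(p)\coloneqq\operatorname{span}\{v_1(p),\dots,v_m(p)\}$, and let $P_{V(p)}$, $P_{V(p)^\perp}=\mathrm{Id}-P_{V(p)}$ be the orthogonal projections; since the $v_i$ are pointwise orthonormal, $P_{V(p)}w=\sum_{i=1}^m\langle w,v_i(p)\rangle v_i(p)$, and since the $v_i$ are $1$-Lipschitz a rank-one estimate gives $\|P_{V(p)^\perp}-P_{V(q)^\perp}\|_{\mathrm{op}}\le 2m\,d(p,q)$.

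Fix a maximal $\delta$-net $\mathcal N\subset X$ at a scale $\delta\asymp(Km)^{-1}$ (its exact value chosen last), the covering $X=\bigcup_{x\in\mathcal N}B_{2\delta}(x)$, and a Lipschitz partition of unity $\{\rho_x\}_{x\in\mathcal N}$ subordinate to it with $\rho_x$ supported in $B_{2\delta}(x)$ and $\|\rho_x\|_{\mathrm{Lip}}\lesssim_K\delta^{-1}$; by the doubling inequality \eqref{doubling-net} at most $K^2$ of the $\rho_x$ are nonzero at any point, and more generally only $O_K(1)$ net points lie in any $O(\delta)$-ball. Now choose $g_x\in\bbs^{D-1}$, $x\in\mathcal N$, independently and uniformly at random, set $w_x\coloneqq P_{V(x)^\perp}(g_x)/|P_{V(x)^\perp}(g_x)|$ (a unit vector orthogonal to $V(x)$, almost surely defined), and put
\[
h\coloneqq\sum_{x\in\mathcal N}\rho_x w_x\colon X\to\bbr^D,\qquad u\coloneqq P_{V(\cdot)^\perp}(h).
\]
Then $u(p)\perp V(p)$ by construction. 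The Lipschitz bound is routine: $\|h\|_{C^0}\le 1$, and since $\sum_x(\rho_x(p)-\rho_x(q))=0$ one may replace each $w_x$ by $w_x-w_{x_0}$ for a fixed active $x_0$, so the bounded overlap yields $\|h\|_{\mathrm{Lip}}\lesssim_K\delta^{-1}$, and then \eqref{first-alg} gives $\|u\|_{\mathrm{Lip}}\le\|h\|_{\mathrm{Lip}}+2m\lesssim_K\delta^{-1}+m$ while $\|u\|_{C^0}\le 1$. The crux is a uniform lower bound $|u(p)|\gtrsim K^{-1}$: each active $x$ has $d(p,x)<2\delta$ and $w_x\perp V(x)$, so $|P_{V(p)}(w_x)|=|(P_{V(p)}-P_{V(x)})(w_x)|\le 4m\delta$, hence $|P_{V(p)}(h(p))|\le 4m\delta$ and $|u(p)|\ge|h(p)|-4m\delta$; it remains to prove $|h(p)|\gtrsim K^{-1}$.

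This last bound is exactly where concentration of measure and the Lovász local lemma enter. Expanding, $|h(p)|^2=\sum_x\rho_x(p)^2+\sum_{x\ne x'}\rho_x(p)\rho_{x'}(p)\langle w_x,w_{x'}\rangle$, where $\sum_x\rho_x(p)^2\ge K^{-2}$ by Cauchy--Schwarz (at most $K^2$ nonzero terms summing to $1$) and $\bigl|\sum_{x\ne x'}\rho_x(p)\rho_{x'}(p)\langle w_x,w_{x'}\rangle\bigr|\le\max|\langle w_x,w_{x'}\rangle|$ over pairs active at $p$; so it suffices to force $|\langle w_x,w_{x'}\rangle|\le\tau\coloneqq\frac1{2K^2}$ whenever $x,x'\in\mathcal N$ satisfy $d(x,x')<4\delta$. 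For a fixed such pair, $w_x,w_{x'}$ are independent and each is (essentially) uniform on the unit sphere of a $(D-m)$-dimensional subspace — the subspaces $V(x)^\perp,V(x')^\perp$ differ by $O(m\delta)$, which is negligible — so Lévy's spherical concentration inequality gives $\Pr\bigl[|\langle w_x,w_{x'}\rangle|>\tau\bigr]\le 2\exp(-c(D-m)\tau^2)$ for an absolute $c>0$. The bad event $\mathcal B_{\{x,x'\}}\coloneqq\{|\langle w_x,w_{x'}\rangle|>\tau\}$ depends only on $g_x,g_{x'}$, and by \eqref{doubling-net} is mutually independent of all but $O_K(1)$ of the other bad events; hence the symmetric Lovász local lemma produces a realization of $\{g_x\}$ avoiding every $\mathcal B_{\{x,x'\}}$, provided $2\exp(-c(D-m)\tau^2)$ is below a suitable $1/O_K(1)$, i.e.\ provided $D-m\gtrsim K^4\log K$ — precisely what the hypothesis $m\le D-224K^4\log K$ furnishes. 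For such $\{g_x\}$ we get $|h(p)|\ge(2K^2)^{-1/2}$ everywhere; taking $\delta\asymp(Km)^{-1}$ small enough makes $4m\delta\le\frac12|h(p)|$, so $|u(p)|\gtrsim K^{-1}$. Feeding $\|u\|_{C^0}\le 1$, $\|u\|_{\mathrm{Lip}}\lesssim_K\delta^{-1}+m\asymp Km$, and $\inf_X|u|\gtrsim K^{-1}$ into \eqref{unit-alg} bounds $\|v_{m+1}\|_{\mathrm{Lip}}$ polynomially in $K$ and $m$; optimizing the numerology (the choice of $\delta$, the profile of the partition of unity, and the exact overlap and dependency constants) yields the stated $150K^5m(m+1)$.

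The main obstacle is the joint calibration of $D-m$ (relative to $K$), $\tau$, and $\delta$: $\tau$ must be small compared to the doubling-controlled overlap so that the diagonal term $\sum_x\rho_x(p)^2\ge K^{-2}$ survives the off-diagonal perturbation; $\tau$ must be large enough that Lévy's estimate beats the doubling-controlled dependency degree in the local lemma, and this competition is exactly what forces the quantitative gap $D-m\gtrsim K^4\log K$; and $\delta$ must be $\lesssim (Km)^{-1}$ so that $V(\cdot)$ is "locally almost constant'' along $\mathcal N$ (making $P_{V(p)}(h(p))$ negligible against $|h(p)|$), yet as large as possible since the unavoidable factor $\delta^{-1}$ enters $\|h\|_{\mathrm{Lip}}$. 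Also requiring care are the verification of Lévy's bound for the projected vectors $w_x$ when $V(x)\ne V(x')$, and the choice of a partition of unity whose individual members are genuinely $O_K(\delta^{-1})$-Lipschitz — which dictates using supports $B_{2\delta}(x)$ rather than $B_{\delta}(x)$, hence the overlap bound $K^2$ rather than $K$.
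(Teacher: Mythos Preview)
Your approach is essentially identical to the paper's: choose near-orthogonal unit vectors in $V(x)^\perp$ at net points via spherical concentration and the Lov\'asz local lemma, interpolate with a partition of unity, then Gram--Schmidt. Two small points: (i) the paper uses a \emph{quadratic} partition of unity $\sum_q\phi_q^2=1$, so the diagonal of $|\tilde v_{m+1}|^2$ is exactly $1$ and the lower bound is absolute; your linear partition only gives $\sum_x\rho_x(p)^2\ge K^{-2}$, hence $\inf|u|\gtrsim K^{-1}$, which costs an extra $K^2$ through \eqref{unit-alg} and makes the precise constant $150K^5m(m+1)$ unlikely to fall out of your numerology as written. (ii) Your ``replace $w_x$ by $w_x-w_{x_0}$'' trick for $\|h\|_{\mathrm{Lip}}$ is inapplicable here---the $w_x$ are nearly orthogonal, not nearly equal, so $|w_x-w_{x_0}|\approx\sqrt2$---but the claimed bound $\|h\|_{\mathrm{Lip}}\lesssim_K\delta^{-1}$ is still correct simply because only $O_K(1)$ terms contribute at any pair $p,q$.
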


It will later become clear that Theorem \ref{Lip-lifting} provides a partial positive answer to Question \ref{general-lift} for many other function spaces as well; see Theorem \ref{partialpositiveanswer}.

\begin{proof}
Take a maximal $\delta$-net $\mathcal{N}_\delta$ of $X$, where $\delta=1/(8Km)$. By \eqref{doubling-net},
\[
|\mathcal{N}_\delta \cap B_{2\delta}(p)|\le K^2\quad \mathrm{and}\quad|\mathcal{N}_\delta \cap B_{4\delta}(p)|\le K^3.
\]
Let $\Omega$ be a probability space on which independent random variables $v_{m+1}'(p)\in \bbs^{D-1}$ are defined so that for each $\omega\in \Omega$ and $p\in X$, $v_{m+1}'(p)(\omega)$ forms an orthonormal set along with $v_1(p)(\omega),$ $\cdots,$ $v_m(p)(\omega)$. Let $\epsilon=1/(4K^2)$, and for each $p\in \mathcal{N}_\delta$, let us define the event
\[
A_p=\{\omega\in \Omega:\mathrm{there~exists~} q\in \mathcal{N}_\delta \cap (B_{2\delta}(p)\setminus \{p\})\mathrm{~such~that~}|v_{m+1}'(p)(\omega)\cdot v_{m+1}'(q)(\omega)|>\epsilon\}.
\]
Note that for any $p,q\in \mathcal{N}_\delta$ distinct, we may compute
\[
\mathrm{Pr}(|v_{m+1}'(p)\cdot v_{m+1}'(q)|>\epsilon)= \mathbb{E}_p \mathrm{Pr}_q(|v_{m+1}'(p)\cdot v_{m+1}'(q)|>\epsilon)\le \exp\left(-\frac{\epsilon^2}{2}(D-m)\right),
\]
where the last inequality follows from a standard computation on the area of caps on the sphere (see \cite[Chapter 2]{10.5555/21465} for instance). Therefore, we may estimate the probability of each $A_p$ using a union bound:
\begin{align*}
\mathrm{Pr}(A_p)&\le \sum_{q\in \mathcal{N}_\delta \cap (B_{2\delta}(p)\setminus \{p\})}\mathrm{Pr}(|v_{m+1}'(p)\cdot v_{m+1}'(q)|>\epsilon)\\
&\le |\mathcal{N}_\delta \cap (B_{2\delta}(p)\setminus \{p\})|\cdot \exp\left(-\frac{\epsilon^2}{2}(D-m)\right)\le K^2\cdot \exp\left(-\frac{\epsilon^2}{2}(D-m)\right).
\end{align*}
Also note that for each $p\in \mathcal{N}_\delta$, $A_p$ is mutually independent with the collection of events $\{A_q:q\in \mathcal{N}_\delta \setminus B_{4\delta}(p)\}$, which are all the $A_q$ except possibly $|\mathcal{N}_\delta\cap B_{4\delta}(p)|\le K^3$ of them. By the Lov\'{a}sz local lemma, we see that if
\begin{equation}\label{lovasz}
e\cdot K^3 \cdot K^2 \cdot \exp\left(-\frac{\epsilon^2}{2}(D-m)\right)< 1,    
\end{equation}
then for any finite subcollection $S\subset \mathcal{N}_\delta$ we have $\mathrm{Pr}(\bigcap_{p\in S}A_p^\mathsf{c})>0$. But by our choice of parameters $D-m\ge 224K^4\log K$ and $\epsilon=\frac{1}{4K^2}$, the condition \eqref{lovasz} is indeed satisfied, since the left-hand side is bounded by
\begin{align*}
    &e\cdot K^5\cdot \exp\left(-\frac{\epsilon^2}{2}(D-m)\right)\le e\cdot K^5\cdot \exp\left(-7\log K\right)<1\quad(\mathrm{since~}7\log K>5\log K+1\mbox{ as }K\ge 2).
\end{align*}
Hence, for any finite subcollection $S\subset \mathcal{N}_\delta$ we have $\mathrm{Pr}(\bigcap_{p\in S}A_p^\mathsf{c})>0$, and in particular $\bigcap_{p\in S}A_p^\mathsf{c}\neq \emptyset$. We can thus find an assignment $\{v_{m+1,S}'(p)\}_{p\in S}$ such that for any distinct $p,q\in S$ with $d(p,q)<2\delta$ we have $|v_{m+1,S}'(p)\cdot v_{m+1,S}'(q)|\le \epsilon$. By taking an arbitrary enumeration of $\mathcal{N}_\delta$, taking a monotone increasing sequence of $S$'s that cover $\mathcal{N}_\delta$ and passing to a limit along a nonprincipal ultrafilter, we conclude the existence of an assignment $\{v_{m+1}'(p)\}_{p\in \mathcal{N}_\delta}$ such that for any distinct $p,q\in \mathcal{N}_\delta$ with $d(p,q)<2\delta$ we have $|v_{m+1}'(p)\cdot v_{m+1}'(q)|\le \epsilon$.

We now ``interpolate'' the discrete vector field $\{v_{m+1}'(p)\}_{p\in \mathcal{N}_\delta}$ to produce a vector field\linebreak $\{\tilde{v}_{m+1}(p)\}_{p\in X} $ defined on the entirety of $X$, which nearly has the desired properties. We first construct a ``quadratic'' partition of unity $\{\phi_q\}_{q\in \mathcal{N}_\delta}$, i.e., functions $\phi_q:X\to [0,1]$ defined for each $q\in \mathcal{N}_\delta$ such that
\begin{itemize}
    \item $\mathrm{supp~} \phi_q\subset B_{2\delta}(q)$, $q\in \mathcal{N}_\delta$,
    \item $\sum_{q\in \mathcal{N}_\delta}\phi_q^2=1$ on $X$,
    \item $\|\phi_q\|_{\mathrm{Lip}}\le 2K^2\delta^{-1}$, $q\in \mathcal{N}_\delta$.
\end{itemize}
Indeed, we start by defining for each $q\in \mathcal{N}_\delta$ the function
\[
\tilde{\phi}_q(p)=
\begin{cases}
1,& d(p,q)\le \delta,\\
2-\frac{d(p,q)}{\delta},&\delta<d(p,q)\le 2\delta,\\
0,& d(p,q)>2\delta,
\end{cases}
\]
and then define
\[
\phi_q\coloneqq \frac{\tilde{\phi}_q}{\sqrt{\sum_{r\in \mathcal{N}_\delta}\tilde{\phi}_r^2}}.
\]
This obviously satisfies the first two properties, and it remains to compute $\|\phi_q\|_{\mathrm{Lip}}$. We clearly have $\|\tilde{\phi}_q\|_{C^0}\le 1$, $\|\tilde{\phi}_q\|_{\mathrm{Lip}}\le \delta^{-1}$.

We first observe that
\[
\sum_{r\in \mathcal{N}_\delta}\tilde{\phi}_r^2\ge 1\quad\mathrm{and}\quad 
\left\|\sum_{r\in \mathcal{N}_\delta}\tilde{\phi}_r^2\right\|_{\mathrm{Lip}}\le (4K^2-2)\delta^{-1}.
\]
Indeed, the first follows from the fact that $\mathcal{N}_\delta$ is a maximal $\delta$-net. For the second property, fix any $p,p'\in X$ with $p\neq p'$. If $B_{2\delta}(p)\cap B_{2\delta}(p')\cap \mathcal{N}_\delta=\emptyset$ then we must have $d(p,p')\ge \delta$, because otherwise $B_{2\delta}(p)\cap B_{2\delta}(p')\cap \mathcal{N}_\delta\supseteq B_\delta(p)\cap \mathcal{N}_\delta\neq\emptyset$, by maximality of $\mathcal{N}_\delta$. We have 
\[
1\le \sum_{r\in \mathcal{N}_\delta}\tilde{\phi}_r(p)^2=\sum_{r\in B_{2\delta}(p)\cap \mathcal{N}_\delta}\tilde{\phi}_r(p)^2\le |B_{2\delta}(p)\cap \mathcal{N}_\delta|\le K^2,
\]
and similarly
\[
1\le \sum_{r\in \mathcal{N}_\delta}\tilde{\phi}_r(p')^2\le K^2.
\]
Therefore, in this case,
\[
\frac{\left|\sum_{r\in \mathcal{N}_\delta}\tilde{\phi}_r(p)^2-\sum_{r\in \mathcal{N}_\delta}\tilde{\phi}_r(p')^2\right|}{d(p,p')}\le (K^2-1)\delta^{-1}\le (4K^2-2)\delta^{-1}.
\]
On the other hand, if $B_{2\delta}(p)\cap B_{2\delta}(p')\cap \mathcal{N}_\delta\neq \emptyset$ then clearly
\[
|B_{2\delta}(p)\cap B_{2\delta}(p')\cap \mathcal{N}_\delta|\le |B_{2\delta}(p)\cap \mathcal{N}_\delta|+| B_{2\delta}(p')\cap \mathcal{N}_\delta|-1\le 2K^2-1.
\]
We have
\[
\sum_{r\in \mathcal{N}_\delta}\tilde{\phi}_r(p)^2=\sum_{r\in B_{2\delta}(p)\cap \mathcal{N}_\delta}\tilde{\phi}_r(p)^2=\sum_{r\in (B_{2\delta}(p)\cup B_{2\delta}(p'))\cap \mathcal{N}_\delta}\tilde{\phi}_r(p)^2
\]
and similarly
\[
\sum_{r\in \mathcal{N}_\delta}\tilde{\phi}_r(p')^2=\sum_{r\in (B_{2\delta}(p)\cup B_{2\delta}(p'))\cap \mathcal{N}_\delta}\tilde{\phi}_r(p')^2,
\]
so, noting that $|\tilde{\phi}_r(p)^2-\tilde{\phi}_r(p')^2|\le 2|\tilde{\phi}_r(p)^2-\tilde{\phi}_r(p')|\le 2\|\tilde{\phi}_r\|_{\mathrm{Lip}} d(p,p')\le 2\delta^{-1}d(p,p')$,
\begin{align*}
    \left|\sum_{r\in \mathcal{N}_\delta}\tilde{\phi}_r(p)^2-\sum_{r\in \mathcal{N}_\delta}\tilde{\phi}_r(p')^2\right|&\le \sum_{r\in (B_{2\delta}(p)\cup B_{2\delta}(p'))\cap \mathcal{N}_\delta} |\tilde{\phi}_r(p)^2-\tilde{\phi}_r(p')^2|\\
    &\le |(B_{2\delta}(p)\cup B_{2\delta}(p'))\cap \mathcal{N}_\delta|\cdot 2 \delta^{-1}d(p,p')\\
    &\le (2K^2-1)\cdot 2 \delta^{-1}d(p,p').
\end{align*}
This finishes the verification that $\left\|\sum_{r\in \mathcal{N}_\delta}\tilde{\phi}_r^2\right\|_{\mathrm{Lip}}\le (4K^2-2)\delta^{-1}$.

By \eqref{reciprocal} and \eqref{squareroot}, we have
\[
\left\|\frac{1}{\sqrt{\sum_{r\in \mathcal{N}_\delta}\tilde{\phi}_r^2}}\right\|_{\mathrm{Lip}}\le\left\|\sqrt{\sum_{r\in \mathcal{N}_\delta}\tilde{\phi}_r^2}\right\|_{\mathrm{Lip}}\le  \frac 12 \left\|\sum_{r\in \mathcal{N}_\delta}\tilde{\phi}_r^2\right\|_{\mathrm{Lip}}\le (2K^2-1)\delta^{-1}.
\]
Thus, by the definition of $\phi_q$ and \eqref{first-alg}, we have
\begin{align*}
    \|\phi_q\|_{\mathrm{Lip}}&\le \left\|\frac{1}{\sqrt{\sum_{r\in \mathcal{N}_\delta}\tilde{\phi}_r^2}}\right\|_{C^0}\left\|\tilde{\phi}_q\right\|_{\mathrm{Lip}}+\left\|\frac{1}{\sqrt{\sum_{r\in \mathcal{N}_\delta}\tilde{\phi}_r^2}}\right\|_{\mathrm{Lip}}\left\|\tilde{\phi}_q\right\|_{C^0}\\
    &\le 1\cdot \delta^{-1}+(2K^2-1)\delta^{-1}\cdot 1\le 2K^2\delta^{-1}.
\end{align*}

We now interpolate the vectors $\{v_{m+1}'(q)\}_{q\in \mathcal{N}_\delta}$ using the quadratic partition of unity $\{\phi_q\}_{q\in \mathcal{N}_\delta}$:
\begin{equation}\label{interpolate}
\tilde{v}_{m+1}(p)\coloneqq\sum_{q\in \mathcal{N}_\delta}\phi_q(p)v_{m+1}'(q),\quad p\in X.
\end{equation}
The idea is that this interpolates nearby $v'_{m+1}$, which are mutually almost orthogonal, so $\tilde{v}_{m+1}$ should nearly be a unit vector. Moreover, since the 1-Lipschitz functions $v_1,\cdots,v_m$ vary slowly over distance $\delta$, this $\tilde{v}_{m+1}$ should also be nearly orthogonal to $v_1,\cdots,v_m$. This $\tilde{v}_{m+1}$ will oscillate much quicker than $v_1,\cdots,v_m$, but its Lipschitz constant will be controlled by $K$ and $\delta$.

More precisely, we claim that $\tilde{v}_{m+1}$ satisfies the following quantitative estimates:
\begin{enumerate}[leftmargin=*]
    \item $\frac 34\le |\tilde{v}_{m+1}(p)|^2\le \frac 54$ for each $p\in X$,
    \item $|\tilde{v}_{m+1}(p)\cdot v_i(p)|\le \frac 1{4m}$ for each $p\in X$ and $i=1,\cdots,m$,
    \item $\|\tilde{v}_{m+1}\|_{\mathrm{Lip}}\le 2K^2(2K^2-1)\delta^{-1}$.
\end{enumerate}
Indeed, by the support property of $\phi_q$, we see that the summation in \eqref{interpolate} is locally finite:
\[
\tilde{v}_{m+1}(p)=\sum_{q\in \mathcal{N}_\delta\cap B_{2\delta}(p)}\phi_q(p)v_{m+1}'(q).
\]
We can verify property (1) using the near-orthogonality and the fact that the sum of squares of the $\phi_q$ is 1:
\begin{align*}
    \left||\tilde{v}_{m+1}(p)|^2-1\right|&=\left|\sum_{\substack{q,q'\in \mathcal{N}_\delta\cap B_{2\delta}(p) \\ q\neq q'}}\phi_q(p)\phi_{q'}(p)v_{m+1}'(q)\cdot v_{m+1}'(q')\right|\\
    &\le \sum_{\substack{q,q'\in \mathcal{N}_\delta\cap B_{2\delta}(p) \\ q\neq q'}}\phi_q(p)\phi_{q'}(p)\left|v_{m+1}'(q)\cdot v_{m+1}'(q')\right|\\
    &\le \sum_{\substack{q,q'\in \mathcal{N}_\delta\cap B_{2\delta}(p) \\ q\neq q'}}\frac{\phi_q(p)^2+\phi_{q'}(p)^2}{2}\epsilon \le |\mathcal{N}_\delta\cap B_{2\delta}(p)|\epsilon\le K^2\epsilon=\frac 14.
\end{align*}
To verify property (2), we see that for each $p\in X$ and $i=1,\cdots, m$,
\[
\tilde{v}_{m+1}(p)\cdot v_i(p) =\sum_{q\in \mathcal{N}_\delta\cap B_{2\delta}(p)}\phi_q(p)v_{m+1}'(q)\cdot v_i(p).
\]
We observe that for each $q\in \mathcal{N}_\delta\cap B_{2\delta}(p)$, we can estimate
\begin{align*}
|v_{m+1}'(q)\cdot v_i(p)|&\le |v_{m+1}'(q)\cdot v_i(q)|+|v_{m+1}'(q)\cdot (v_i(p)-v_i(q))|\\
&\le 0+ | v_i(p)-v_i(q)|\le 2\delta,
\end{align*}
where we have used the orthogonality of $v_{m+1}'(q)$ against $v_i(q)$ and the fact that $\| v_i\|_{\mathrm{Lip}}\le 1$. By these facts and Cauchy--Schwarz, we have the bound
\begin{align*}
    |\tilde{v}_{m+1}(p)\cdot v_i(p)| \le \sum_{q\in \mathcal{N}_\delta\cap B_{2\delta}(p)}\phi_q(p)\cdot 2\delta\le |\mathcal{N}_\delta\cap B_{2\delta}(p)|^{1/2}\cdot 2\delta\le K\cdot 2\delta=\frac 1{4m}.
\end{align*}

To verify (3), we first fix $p,p'\in X$ with $p\neq p'$. If $B_{2\delta}(p)\cap B_{2\delta}(p')\cap \mathcal{N}_\delta=\emptyset$ then we must have $d(p,p')\ge \delta$, because otherwise $B_{2\delta}(p)\cap B_{2\delta}(p')\cap \mathcal{N}_\delta\supseteq B_\delta(p)\cap \mathcal{N}_\delta\neq\emptyset$. By (1), we have $\frac{\sqrt{3}}{2}\le |\tilde{v}_{m+1}(p)|,|\tilde{v}_{m+1}(p')|\le \frac{\sqrt{5}}{2}$, so
\[
\frac{|\tilde{v}_{m+1}(p)-\tilde{v}_{m+1}(p')|}{d(p,p')}\le \frac{\sqrt{5}-\sqrt{3}}{2}\delta^{-1}\le 2K^2(2K^2-1)\delta^{-1}.
\]
On the other hand, if $B_{2\delta}(p)\cap B_{2\delta}(p')\cap \mathcal{N}_\delta\neq \emptyset$ then clearly
\[|B_{2\delta}(p)\cap B_{2\delta}(p')\cap \mathcal{N}_\delta|\le |B_{2\delta}(p)\cap \mathcal{N}_\delta|+| B_{2\delta}(p')\cap \mathcal{N}_\delta|-1\le 2K^2-1.
\]
We have
\[
\tilde{v}_{m+1}(p)=\sum_{r\in B_{2\delta}(p)\cap \mathcal{N}_\delta}\phi_r(p)v_{m+1}(r)=\sum_{r\in (B_{2\delta}(p)\cup B_{2\delta}(p'))\cap \mathcal{N}_\delta}\phi_r(p)v_{m+1}(r)
\]
and similarly
\[
\tilde{v}_{m+1}(p')=\sum_{r\in (B_{2\delta}(p)\cup B_{2\delta}(p'))\cap \mathcal{N}_\delta}\phi_r(p')v_{m+1}(r),
\]
so
\begin{align*}
    \left|\tilde{v}_{m+1}(p)-\tilde{v}_{m+1}(p')\right|&\le \sum_{r\in (B_{2\delta}(p)\cup B_{2\delta}(p'))\cap \mathcal{N}_\delta}|\phi_r(p)-\phi_r(p')|\cdot |v_{m+1}(r)|\\
    &\le \sum_{r\in (B_{2\delta}(p)\cup B_{2\delta}(p'))\cap \mathcal{N}_\delta}\|\phi_r\|_{\mathrm{Lip}}d(p,p')\\
    &\le |(B_{2\delta}(p)\cup B_{2\delta}(p'))\cap \mathcal{N}_\delta|\cdot 2K^2\delta^{-1}d(p,p')\\
    &\le 2K^2(2K^2-1)\delta^{-1}d(p,p').
\end{align*}
This finishes the verification of properties (1) to (3).

Properties (1)-(3) above finally allow us to use the Gram-Schmidt orthogonalization process to obtain a true $v_{m+1}$ with the desired properties:
\[
v_{m+1}(p):=\frac{\tilde{v}_{m+1}(p)-\sum_{i=1}^m \tilde{v}_{m+1}(p)\cdot v_i(p)}{|\tilde{v}_{m+1}(p)-\sum_{i=1}^m \tilde{v}_{m+1}(p)\cdot v_i(p)|},\quad p\in X.
\]
This is well-defined because
\[
|\tilde{v}_{m+1}(p)-\sum_{i=1}^m \tilde{v}_{m+1}(p)\cdot v_i(p)|\ge |\tilde{v}_{m+1}(p)|-\sum_{i=1}^m |\tilde{v}_{m+1}(p)\cdot v_i(p)|\ge \frac{\sqrt{3}}{2}-\frac 14>0,
\]
and $v_{m+1}(p)$ clearly forms an orthonormal system along with $v_1(p),\cdots,v_m(p)$. We also note that
\[
|\tilde{v}_{m+1}(p)-\sum_{i=1}^m \tilde{v}_{m+1}(p)\cdot v_i(p)|\le |\tilde{v}_{m+1}(p)|+\sum_{i=1}^m |\tilde{v}_{m+1}(p)\cdot v_i(p)|\le \frac{\sqrt{3}}{2}+\frac 14.
\]
To check the Lipschitz regularity of $v_{m+1}$ we first compute (recalling $\delta^{-1}=8Km$)
\begin{align*}
    \left\|\tilde{v}_{m+1}-\sum_{i=1}^m \tilde{v}_{m+1}\cdot v_i\right\|_{\mathrm{Lip}}&\le \left\|\tilde{v}_{m+1}\right\|_{\mathrm{Lip}}+\sum_{i=1}^m (\left\|\tilde{v}_{m+1}\right\|_{C^0}\left\| v_i\right\|_{\mathrm{Lip}}+\left\|\tilde{v}_{m+1}\right\|_{\mathrm{Lip}}\left\| v_i\right\|_{C^0})\\
    &\le 2K^2(2K^2-1)\delta^{-1}+m(\frac{\sqrt{5}}{2}\cdot 1+2K^2(2K^2-1)\delta^{-1}\cdot 1)\\
    &= 2K^2\delta^{-1}\Big(2K^2-1+\frac{\sqrt{5}}{32K^3}+2K^2m-m\Big)\\
    &\le 4K^4\delta^{-1}(m+1)=32K^5m(m+1).
\end{align*}
By \eqref{unit-alg} we finally have
\begin{align*}
    \|v_{m+1}\|_{\mathrm{Lip}}&\le \Big(\big(\frac{\sqrt{3}}{2}-\frac 14\big)^{-1}+\big(\frac{\sqrt{3}}{2}-\frac 14\big)^{-2}\big(\frac{\sqrt{3}}{2}+\frac 14\big)\Big) \left\|\tilde{v}_{m+1}-\sum_{i=1}^m \tilde{v}_{m+1}\cdot v_i\right\|_{\mathrm{Lip}}\\
    &\le \Big(\big(\frac{\sqrt{3}}{2}-\frac 14\big)^{-1}+\big(\frac{\sqrt{3}}{2}-\frac 14\big)^{-2}\big(\frac{\sqrt{3}}{2}+\frac 14\big)\Big)\cdot 32K^5m(m+1)\\
    &\le 150K^5m(m+1).
\end{align*}
\end{proof}

\begin{remark}
In Theorem \ref{Cj-lifting-lattice}, the required dimension gap $D-m-1$ is precisely the topological dimension of the given domain $G$. In contrast, in Theorem \ref{Lip-lifting} the dimension gap is $224 K^4\log K$, which is exponential in the ``metric dimension'' $\log K$ of the given domain $X$. Note that a dimension gap of $\Omega(\log K)$ is necessary: take for example $X=S^{2n}$, $D=2n+1$, $m=1$, and $v_1:S^{2n}\hookrightarrow\bbr^{2n+1}$ the standard inclusion. Then the log of the doubling constant for $X$ is a universal constant multiple of $n$, and the dimension gap is $D-m-1=2n-1$. However, the high-dimensional hairy ball theorem tells us that no continuous orthonormal extension is possible, let alone a Lipschitz orthonormal extension.

Perhaps one could reduce the dimension gap in Theorem \ref{Lip-lifting}, say by randomizing the proof using the machinery of random nets and partitions and their padding properties as in \cite{naor2010assouad}, though reducing it to somewhere near $\Omega(\log K)$ seems to require more effort (or perhaps it is impossible, but we do not have a counterexample yet).
\end{remark}

We now state and prove a version of Theorem \ref{Cj-lifting-lattice} but for general Carnot groups, using the idea of proof of Theorem \ref{Lip-lifting}.

\begin{theorem}\label{Cj-lifting}
Let $1\le m\le D-2^{4n_h+7}n_h$, $j\ge 1$, and let $\{R_i\}_{i=1}^j$ be a log-concave sequence of positive reals. Let $v_1,\cdots,v_m:G\to \bbs^{D-1}$ form an orthonormal system at each point, with the uniform regularity bound
\[
\sum_{k=1}^j R_k\|\nabla^k v_i\|_{C^0}\le 1,\quad i=1,\cdots, m.
\]
Then there exists $v_{m+1}:G\to \bbs^{D-1}$ such that $v_1,\cdots,v_m$ along with $v_{m+1}$ form an orthonormal system at each point, and
\[
\sum_{k=1}^j R_k\|\nabla^k v_{m+1}\|_{C^0}\lesssim_{G,m,j} 1.
\]
\end{theorem}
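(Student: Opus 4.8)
The plan is to run the probabilistic argument of Theorem~\ref{Lip-lifting} almost verbatim, replacing the abstract $K$-doubling count \eqref{doubling-net} by the explicit polynomial net bound $|\mathcal N_\delta\cap B_{2^\ell\delta}(p)|\le (2^{\ell+1}+1)^{n_h}$ coming from \eqref{locbd}, and then upgrading the resulting Lipschitz extension to a $C^j$ extension by mollification followed by Gram--Schmidt, exactly as sketched after Theorem~\ref{Cj-lifting-lattice}. In contrast with the proof of Theorem~\ref{Cj-lifting-lattice}, which relied on a $\Gamma$-periodic CW structure, this route is insensitive to the topology of $G$: it uses only that $(G,d_G)$ is a doubling metric space with the above effective net counts.

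Concretely, I would first fix a net scale $\delta\asymp_{G,m}R_1$, small enough by a factor depending only on $n_h$ and $m$ that the near-orthogonality estimate below survives against the $R_1^{-1}$-Lipschitz frame $v_1,\dots,v_m$ (recall $\|v_i\|_{\mathrm{Lip}}\le\|\nabla v_i\|_{C^0}\le R_1^{-1}$), take a maximal $\delta$-net $\mathcal N_\delta$, and repeat the Lov\'asz-local-lemma step: choose $v_{m+1}'(p)$ independently and uniformly on the sphere of dimension $D-m-1$ orthogonal to $v_1(p),\dots,v_m(p)$, set $\epsilon$ of order $4^{-n_h}$, and let $A_p$ be the event that $|v_{m+1}'(p)\cdot v_{m+1}'(q)|>\epsilon$ for some neighbouring $q\in\mathcal N_\delta$. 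By the sphere-cap concentration bound each $\mathrm{Pr}(A_p)\lesssim (\text{polynomial in }n_h)\cdot e^{-\epsilon^2(D-m)/2}$, each $A_p$ is mutually independent of all but $O(13^{n_h})$ of the others, and the hypothesis $D-m\ge 2^{4n_h+7}n_h$ together with $n_h\ge 4$ makes the Lov\'asz condition hold; passing to a limit along an ultrafilter over finite subfamilies produces $\{v_{m+1}'(p)\}_{p\in\mathcal N_\delta}$ with $|v_{m+1}'(p)\cdot v_{m+1}'(q)|\le\epsilon$ for all nearby $p,q$. Interpolating with a quadratic partition of unity $\{\phi_q\}$ subordinate to $\{B_{c\delta}(q)\}_{q\in\mathcal N_\delta}$ as in \eqref{interpolate} — the partition built and estimated via \eqref{first-alg}, \eqref{reciprocal}, \eqref{squareroot} — yields, verbatim from the proof of Theorem~\ref{Lip-lifting}, a map $\tilde v_{m+1}:G\to\bbr^{D}$ with $|\tilde v_{m+1}|^2\in[\tfrac34,\tfrac54]$, $|\tilde v_{m+1}\cdot v_i|\le\tfrac1{4m}$ for all $i$, and $\|\tilde v_{m+1}\|_{\mathrm{Lip}}\lesssim_{G,m}R_1^{-1}$.

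Next I would smooth $\tilde v_{m+1}$ by convolving on the right with a scalar mollifier $K_\rho$ at scale $\rho\asymp R_1$ in the sense of subsection~\ref{sec:LP}, so that left-invariant derivatives fall on the mollifier, $\nabla^k(\tilde v_{m+1}*K_\rho)=\tilde v_{m+1}*\nabla^kK_\rho$. Since $G$ is unimodular, $\int_G\nabla^kK_\rho=0$ for $k\ge1$, so one may subtract $\tilde v_{m+1}(p)$ inside the integral and use the Lipschitz bound to get $\|\nabla^k(\tilde v_{m+1}*K_\rho)\|_{C^0}\lesssim\rho^{1-k}\|\tilde v_{m+1}\|_{\mathrm{Lip}}\lesssim_{G,m}R_1^{-k}\le R_k^{-1}$, the last step being the log-concavity inequality $R_1^k\ge R_k$; hence $\sum_{k=1}^jR_k\|\nabla^k(\tilde v_{m+1}*K_\rho)\|_{C^0}\lesssim_{G,m,j}1$. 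Taking the implied constant in $\rho\asymp R_1$ small enough, averaging over a ball of radius $\rho$ perturbs $\tilde v_{m+1}$ by as little as we wish (again using $\|\tilde v_{m+1}\|_{\mathrm{Lip}}\lesssim R_1^{-1}$ and the pointwise bound on $\tilde v_{m+1}\cdot v_i$ together with $\|v_i\|_{\mathrm{Lip}}\le R_1^{-1}$), so $w:=\tilde v_{m+1}*K_\rho$ still satisfies $|w|^2\in[\tfrac12,2]$ and $|w\cdot v_i|\le\tfrac1{2m}$. Finally, Gram--Schmidt:
\[
v_{m+1}:=\frac{w-\sum_{i=1}^m(w\cdot v_i)v_i}{\bigl|w-\sum_{i=1}^m(w\cdot v_i)v_i\bigr|},
\]
which is exactly orthonormal with $v_1,\dots,v_m$ and well-defined since the denominator is bounded below by a positive absolute constant; the bound $\sum_{k=1}^jR_k\|\nabla^kv_{m+1}\|_{C^0}\lesssim_{G,m,j}1$ then follows from the algebra, quotient, and square-root properties of the norm $\|\cdot\|_{C^0}+\sum_{k=1}^jR_k\|\nabla^k\cdot\|_{C^0}$ — valid because $\{1,R_1,\dots,R_j\}$ is log-concave, as recorded after \eqref{logconcave} — applied exactly as in the closing Lipschitz estimate of the proof of Theorem~\ref{Lip-lifting}, using the $C^j$ control on $w$ and on the $v_i$ and the lower bound on the denominator.

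The main obstacle is the first half of the second paragraph: carrying the explicit counts $|\mathcal N_\delta\cap B_{c\delta}(p)|$ from \eqref{locbd} through the Lov\'asz computation while choosing $\epsilon$ and the support radius $c\delta$ of the partition of unity so that the Lov\'asz condition is satisfied under precisely the stated gap $D-m\ge 2^{4n_h+7}n_h$ — this is where the inequality $n_h\ge 4$ gets consumed, to absorb the additive constant and the lower-order terms in $\log(e\cdot(\text{dependency degree})\cdot(\text{bad-event count}))$ — and doing so while keeping $\delta$ proportional to $R_1$ with a constant small enough that the near-orthogonality bound $|\tilde v_{m+1}\cdot v_i|\le\tfrac1{4m}$ holds against the now merely $R_1^{-1}$-Lipschitz $v_i$. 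Everything downstream (mollification, Gram--Schmidt) is routine given the function-space algebra already recorded in subsection~\ref{sec:funcspace}, and there is no topological obstruction of the kind that the quantitative nulhomotopy argument of \cite{tao2018embedding} was designed to overcome.
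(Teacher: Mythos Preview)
Your proposal is correct and follows the same overall strategy as the paper: reduce to scale $R_1$, run the Lov\'asz-local-lemma argument over a $\delta$-net using the explicit counts from \eqref{locbd} (the paper takes support radius $1.5\delta$, giving $|\mathcal N_\delta\cap B_{1.5\delta}|\le 4^{n_h}$ and dependency degree $\le 7^{n_h}$; only $n_h\ge 2$ is needed for the numeric check), interpolate via a quadratic partition of unity, and finish with Gram--Schmidt.

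The one substantive difference is how you obtain $C^j$ regularity of the interpolant. You first build a merely Lipschitz $\tilde v_{m+1}$ as in Theorem~\ref{Lip-lifting} and then mollify at scale $\rho\asymp R_1$, invoking $\int\nabla^k K_\rho=0$ and log-concavity to get $R_k\|\nabla^k w\|_{C^0}\lesssim 1$. The paper instead observes that since the net values $v_{m+1}'(q)$ are \emph{constants}, all the regularity of $\tilde v_{m+1}=\sum_q\phi_q\,v_{m+1}'(q)$ comes from the $\phi_q$; so it simply takes the partition of unity to be smooth from the outset (trivial on a Lie group, with $\|\phi_q\|_{C^{j+1}}\lesssim_{G,m}1$), making $\tilde v_{m+1}$ already satisfy $\sum_k R_k\|\nabla^k\tilde v_{m+1}\|_{C^0}\lesssim_{G,m,j}1$ and rendering your mollification step unnecessary. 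Your route is the one encoded in Theorem~\ref{partialpositiveanswer} (Lipschitz first, then smooth via convolution); the paper's is shorter and more direct for Carnot groups, while yours generalizes more readily to function spaces satisfying only the abstract density condition~(4) of Theorem~\ref{partialpositiveanswer}.
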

\begin{proof}

By a simple rescaling argument using the scaling map $\delta_{R_1}$, we may assume $R_1=1$. We repeat the proof of Theorem \ref{Lip-lifting} with a slight variation.

Take a maximal $\delta$-net $\mathcal{N}_\delta$ of $G$, where $\delta=\frac{1}{6\cdot 2^{n_h}m}$. Let $\epsilon=\frac{1}{4^{n_h+1}}$. We repeat the first part of the proof of Theorem \ref{Lip-lifting}, but instead use the estimates
\[
|\mathcal{N}_\delta \cap B_{1.5\delta}(p)|\le 4^{n_h},\quad |\mathcal{N}_\delta \cap B_{3\delta}(p)|\le 7^{n_h}
\]
which follow from \eqref{locbd}.
We again define the probability space $\Omega$ and random variables $\{v_{m+1}'(p)\in \bbs^{D-1}:p\in \mathcal{N}_\delta\}$, and for each $p\in \mathcal{N}_\delta$, we define the slightly different event
\[
A_p=\{\omega\in \Omega:\mathrm{there~exists~}q\in \mathcal{N}_\delta \cap (B_{1.5\delta}(p)\setminus \{p\})\mathrm{~such~that}~|v_{m+1}'(p)(\omega)\cdot v_{m+1}'(q)(\omega)|>\epsilon\}.
\]
By the same computations as before we have that for any $p,q\in \mathcal{N}_\delta$ distinct,
\[
\mathrm{Pr}(|v_{m+1}'(p)\cdot v_{m+1}'(q)|>\epsilon)\le \exp\left(-\frac{\epsilon^2}{2}(D-m)\right),
\]
so we again estimate the probability of each $A_p$ using a union bound:
\begin{align*}
\mathrm{Pr}(A_p)&\le |\mathcal{N}_\delta \cap (B_{1.5\delta}(p)\setminus \{p\})|\cdot \exp\left(-\frac{\epsilon^2}{2}(D-m)\right)\\
&\le 4^{n_h}\cdot \exp\left(-\frac{\epsilon^2}{2}(D-m)\right).
\end{align*}
Observing that for each $p\in \mathcal{N}_\delta$, $A_p$ is mutually independent with the collection of events $\{A_q:q\in \mathcal{N}_\delta \setminus B_{3\delta}(p)\}$, which are all the $A_q$ except possibly $|\mathcal{N}_\delta\cap B_{3\delta}(p)|\le 7^{n_h}$ of them, we see that our choice of parameters $D-m\ge 2^{4n_h+7}n_h$ and $\epsilon=\frac{1}{4^{n_h+1}}$ allows us to apply the Lov\'{a}sz local lemma because
\begin{align*}
e\cdot 7^{n_h}\cdot 4^{n_h}\cdot \exp\left(-\frac{\epsilon^2}{2}(D-m)\right)&=e\cdot 28^{n_h}\cdot \exp\left(-\frac{\epsilon^2}{2}(D-m)\right)\\
    &<e^{4n_h}\cdot \exp\left(-\frac{1}{2^{4n_h+5}}(2^{4n_h+7}n_h)\right)=1\\
    &\quad (\mathrm{since~} 1+\log(28)n_h<4n_h\mbox{ as }n_h\ge 2).
\end{align*}
By the same limiting argument, we conclude the existence of an assignment $\{v_{m+1}'(p)\}_{p\in \mathcal{N}_\delta}$ such that for any distinct $p,q\in \mathcal{N}_\delta$ with $d(p,q)<1.5\delta$ we have $|v_{m+1}'(p)\cdot v_{m+1}'(q)|\le \epsilon$.

As in the previous proof, we now ``interpolate'' the discrete vector field $\{v_{m+1}'(p)\}_{p\in \mathcal{N}_\delta}$ to produce a vector field $\{\tilde{v}_{m+1}(p)\}_{p\in G} $ defined on the entirety of $G$, which nearly has the desired properties. It is not difficult to define a ``quadratic'' partition of unity $\{\phi_q\}_{q\in \mathcal{N}_\delta}$, i.e., functions $\phi_q:G\to [0,1]$ defined for each $q\in \mathcal{N}_\delta$ such that
\begin{itemize}
    \item $\mathrm{supp~} \phi_q\subset B_{1.5\delta}(q)$, $q\in \mathcal{N}_\delta$,
    \item $\sum_{q\in \mathcal{N}_\delta}\phi_q^2=1$ on $G$,
    \item $\|\phi_q\|_{C^0}+\sum_{k=1}^j R_k\|\nabla^k\phi_q\|_{C^0}\le \|\phi_q\|_{C^{j+1}}\lesssim_{G,m} 1$, uniformly over $q\in \mathcal{N}_\delta$.
\end{itemize}
We now interpolate the $\{v_{m+1}'(p)\}_{p\in \mathcal{N}_\delta}$ using $\{\phi_q\}_{q\in \mathcal{N}_\delta}$:
\begin{equation}\label{interpolate2}
\tilde{v}_{m+1}(p)=\sum_{q\in \mathcal{N}_\delta}\phi_q(p)v_{m+1}'(q),\quad p\in G.
\end{equation}
We claim that $\tilde{v}_{m+1}$ nearly satisfies the required properties, namely it satisfies
\begin{enumerate}[leftmargin=*]
    \item $\frac{3}{4}\le |\tilde{v}_{m+1}(p)|^2\le \frac 54$ for each $p\in G$,
    \item $|\tilde{v}_{m+1}(p)\cdot v_i(p)|\le \frac 1{4m}$ for each $p\in G$ and $i=1,\cdots,m$,
    \item $\|\tilde{v}_{m+1}\|_{C^0}+\sum_{k=1}^j R_k\|\nabla^k\tilde{v}_{m+1}\|_{C^0}\lesssim_{G,j,m} 1$.
\end{enumerate}
Indeed, by the support property of $\phi_q$, we see that the summation in \eqref{interpolate2} is locally finite:
\[
\tilde{v}_{m+1}(p)=\sum_{q\in \mathcal{N}_\delta\cap B_{1.5\delta}(p)}\phi_q(p)v_{m+1}'(q).
\]
From this, property (3) above immediately follows. We can verify property (1) using the near-orthogonality and the fact that the sum of squares of the $\phi_q$ is 1:
\begin{align*}
    \left||\tilde{v}_{m+1}(p)|^2-1\right|&=\left|\sum_{\substack{q,q'\in \mathcal{N}_\delta\cap B_{1.5\delta}(p) \\ q\neq q'}}\phi_q(p)\phi_{q'}(p)v_{m+1}'(q)\cdot v_{m+1}'(q')\right|\\
    &\le \sum_{\substack{q,q'\in \mathcal{N}_\delta\cap B_{1.5\delta}(p) \\ q\neq q'}}\phi_q(p)\phi_{q'}(p)\left|v_{m+1}'(q)\cdot v_{m+1}'(q')\right|\\
    &\le \sum_{\substack{q,q'\in \mathcal{N}_\delta\cap B_{1.5\delta}(p) \\ q\neq q'}}\frac{\phi_q(p)^2+\phi_{q'}(p)^2}{2}\epsilon \le |\mathcal{N}_\delta\cap B_{1.5\delta}(p)|\epsilon\le 4^{n_h}\epsilon=\frac 14.
\end{align*}
Finally, we verify property (2). We have, for each $p\in G$ and $i=1,\cdots, m$,
\[
\tilde{v}_{m+1}(p)\cdot v_i(p) =\sum_{q\in \mathcal{N}_\delta\cap B_{1.5\delta}(p)}\phi_q(p)v_{m+1}'(q)\cdot v_i(p).
\]
We observe that for each $q\in \mathcal{N}_\delta\cap B_{1.5\delta}(p)$, we can estimate
\begin{align*}
|v_{m+1}'(q)\cdot v_i(p)|&\le |v_{m+1}'(q)\cdot v_i(q)|+|v_{m+1}'(q)\cdot (v_i(p)-v_i(q))|\\
&\le 0+ | v_i(p)-v_i(q)|\le 1.5\delta,
\end{align*}
where we have used the orthogonality of $v_{m+1}'(q)$ against $v_i(q)$ and the fact that $\|\nabla v_i\|_{C^0}\le 1$. From these facts and Cauchy--Schwarz, we have the bound
\begin{align*}
    |\tilde{v}_{m+1}(p)\cdot v_i(p)| \le \sum_{q\in \mathcal{N}_\delta\cap B_{1.5\delta}(p)}\phi_q(p)\cdot 1.5\delta\le |\mathcal{N}_\delta\cap B_{1.5\delta}(p)|^{1/2}\cdot 1.5\delta\le 2^{n_h}\cdot 1.5\delta=\frac 1{4m}.
\end{align*}
This finishes the verification of properties (1) to (3). Now we can use Gram-Schmidt orthogonalization to obtain a true $v_{m+1}$:
\[
v_{m+1}(p):=\frac{\tilde{v}_{m+1}(p)-\sum_{i=1}^m \tilde{v}_{m+1}(p)\cdot v_i(p)}{|\tilde{v}_{m+1}(p)-\sum_{i=1}^m \tilde{v}_{m+1}(p)\cdot v_i(p)|},\quad p\in G.
\]
This is well-defined because
\[
|\tilde{v}_{m+1}(p)-\sum_{i=1}^m \tilde{v}_{m+1}(p)\cdot v_i(p)|\ge |\tilde{v}_{m+1}(p)|-\sum_{i=1}^m |\tilde{v}_{m+1}(p)\cdot v_i(p)|\ge \frac{\sqrt{3}}{2}-\frac 14>0,
\]
and $v_{m+1}(p)$ clearly forms an orthonormal system along with $v_1(p),\cdots,v_m(p)$. The required regularity of $v_{m+1}$ follows from the algebra property for norms defined by log-concave sequences.
\end{proof}

From the proof of Theorem \ref{Cj-lifting}, we can easily see that Theorem \ref{Lip-lifting} provides a partial positive answer to Question \ref{general-lift} for many other function spaces as well, described in the following theorem.
\begin{theorem}\label{partialpositiveanswer}
Let $(X,d)$ be a $K$-doubling metric space ($K\ge 2$), and let $\mathcal{F}_1\subset \mathrm{Lip}(X, \mathbb{R})\coloneqq \{f:X\to\mathbb{R}|f \mbox{ is Lipschitz}\}$, $\mathcal{F}\subset \mathrm{Lip}(X, \mathbb{R}^D)\coloneqq \{f:X\to\mathbb{R}^D|f \mbox{ is Lipschitz}\}$ be spaces of functions on $X$ such that
\begin{enumerate}[leftmargin=*]
    \item The function spaces $(\mathcal{F}_1,\|\cdot\|_{\mathcal{F}_1})$, $(\mathcal{F},\|\cdot\|_\mathcal{F})$ are normed linear spaces.
    \item $\|f\|_{\mathrm{Lip}(X,\mathbb{R})} \lesssim \|f\|_{\mathcal{F}_1}$ for all $f \in \mathcal{F}$ and $\|v\|_{\mathrm{Lip}(X,\mathbb{R}^D)} \lesssim \|v\|_{\mathcal{F}}$ for all $v \in \mathcal{F}$.
    \item (closure under algebraic operations)
    For $f\in \mathcal{F}_1,v,w\in \mathcal{F}$ we have $fv\in \mathcal{F}$ and $v\cdot w\in \mathcal{F}_1$ with
    \[
    \|fv\|_\mathcal{F}\lesssim \|f\|_{\mathcal{F}_1}\|v\|_\mathcal{F}, \quad \|v\cdot w\|_{\mathcal{F}_1}\lesssim \|v\|_\mathcal{F}\|w\|_\mathcal{F}.
    \]
    Also, if $f\in \mathcal{F}_1$, $f(x)\ge c>0$ for all $x\in X$, then $1/f \in \mathcal{F}_1$ with
    \[
    \|1/f\|_{\mathcal{F}_1}\lesssim_c \|f\|_{\mathcal{F}_1}.
    \]
    If $v\in \mathcal{F}$, $|v(x)|\ge c>0$ for all $x\in X$, then $|v|\in \mathcal{F}_1$ and
    \[
    \||v|\|_{\mathcal{F}_1}\lesssim_c \|v\|_{\mathcal{F}}.
    \]
    \item (density in the space of Lipschitz functions) For any $\delta>0$ and $w \in \mathrm{Lip}(X, \mathbb{R}^D)$ with $\|w\|_{\mathrm{Lip}}\le 1$ there exists $v\in \mathcal{F}(X)$ such that
    \[
    \|v-w\|_{L^\infty}<\delta,\quad \|v\|_{\mathcal{F}}\lesssim_\delta \|w\|_{L^\infty}+\|w\|_{\mathrm{Lip}}.
    \]
\end{enumerate}

Let $1\le m\le D-224 K^4\log K$. If $v_i:X\to\bbs^{D-1}$, $\|v_i\|_{\mathcal{F}}\le 1$, $i=1,\cdots,m,$ form an orthonormal system at each point $p\in X$, then there exists $v_{m+1}:X\to \bbs^{D-1}$ with $\|v_{m+1}\|_\mathcal{F}\lesssim_{K,m,\mathcal{F}} 1$ such that $v_1,\cdots,v_m$ along with $v_{m+1}$ form an orthonormal system at each point $p \in X$.
\end{theorem}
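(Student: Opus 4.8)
The plan is to run the proof of Theorem \ref{Lip-lifting} essentially verbatim up to the construction of the Lipschitz interpolant $\tilde v_{m+1}$, and then to replace its final mollification step (which needs a smooth structure and is therefore unavailable in the present abstract setting) by a single application of the density axiom (4), followed by a Gram--Schmidt step carried out entirely inside $\mathcal{F}$ using the closure axiom (3). First, axiom (2) gives $\|v_i\|_{\mathrm{Lip}}\lesssim_{\mathcal{F}}\|v_i\|_{\mathcal{F}}\le 1$, so the $v_i$ are $O_{\mathcal{F}}(1)$-Lipschitz and the hypotheses of the proof of Theorem \ref{Lip-lifting} are met up to a harmless $\mathcal{F}$-dependent rescaling of Lipschitz constants. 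Carrying out that proof, the dimension gap $m\le D-224K^4\log K$ lets the Lov\'{a}sz local lemma produce an assignment $\{v'_{m+1}(p)\}_{p\in\mathcal{N}_\delta}$ on a maximal $\delta$-net ($\delta\asymp 1/(Km)$) that is pointwise orthonormal to $\{v_i\}$ and $\epsilon$-almost orthogonal between $\delta$-close net points; interpolating it against the Lipschitz quadratic partition of unity built from the metric $d$ (this uses only the elementary inequalities \eqref{first-alg}, \eqref{reciprocal}, \eqref{squareroot}, not the space $\mathcal{F}$) yields a Lipschitz map $\tilde v_{m+1}:X\to\bbr^D$ with $\|\tilde v_{m+1}\|_{\mathrm{Lip}}\lesssim_{\mathcal{F}} K^5 m$, and with $\tfrac34\le|\tilde v_{m+1}|^2\le\tfrac54$ and $|\tilde v_{m+1}\cdot v_i|\le\tfrac1{4m}$ pointwise.

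Next I would apply axiom (4) to $w\coloneqq\tilde v_{m+1}/\max(L,1)$, where $L\coloneqq\|\tilde v_{m+1}\|_{\mathrm{Lip}}$, choosing the approximation parameter $\delta'$ small enough (depending on $K,m$) that the resulting $v\coloneqq\max(L,1)\cdot(\text{approximant})\in\mathcal{F}$ satisfies $\|v-\tilde v_{m+1}\|_{L^\infty}<\tfrac1{100m}$ while $\|v\|_{\mathcal{F}}\lesssim_{\delta'}\max(L,1)\big(\|w\|_{L^\infty}+1\big)\lesssim_{K,m,\mathcal{F}}1$. By the numerical slack already built into the bounds on $\tilde v_{m+1}$, the perturbed map $v$ still obeys $\tfrac12\le|v|^2\le\tfrac32$ and $|v\cdot v_i|\le\tfrac1{2m}$ pointwise. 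Finally, set
\[
v_{m+1}\coloneqq\frac{v-\sum_{i=1}^m (v\cdot v_i)\,v_i}{\bigl|v-\sum_{i=1}^m (v\cdot v_i)\,v_i\bigr|},
\]
whose numerator has modulus bounded below by $\tfrac1{\sqrt2}-\tfrac12>0$; repeated use of axiom (3) (so $v\cdot v_i\in\mathcal{F}_1$, then $(v\cdot v_i)v_i\in\mathcal{F}$, hence the numerator lies in $\mathcal{F}$, its modulus lies in $\mathcal{F}_1$ and is bounded below, its reciprocal lies in $\mathcal{F}_1$, and the product lies back in $\mathcal{F}$) shows $v_{m+1}\in\mathcal{F}$ with $\|v_{m+1}\|_{\mathcal{F}}\lesssim_{K,m,\mathcal{F}}1$, and by construction $v_1,\dots,v_m,v_{m+1}$ is pointwise orthonormal.

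I expect the only genuine subtlety to be the bookkeeping of slack: one must fix the numerical thresholds in the bounds on $\tilde v_{m+1}$ (these thresholds are already present in the proof of Theorem \ref{Lip-lifting}) so that an $L^\infty$-perturbation of size $O_{K,m}(1)\cdot\delta'$ cannot spoil the almost-unit-length and almost-orthogonality properties, which in turn pins down how small $\delta'$ must be taken in axiom (4). Everything else is a direct transcription of the proof of Theorem \ref{Lip-lifting} together with the closure properties of axiom (3); in particular no new use of the doubling hypothesis is needed beyond what that proof already makes, and the dimension gap $224K^4\log K$ is inherited unchanged.
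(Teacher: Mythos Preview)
Your proposal is correct and matches the paper's intended approach: the paper does not give a detailed proof of this theorem, merely remarking that it follows from the proof of Theorem~\ref{Cj-lifting} (which in turn is a smooth-partition-of-unity variant of Theorem~\ref{Lip-lifting}) together with the observation that axiom~(4) replaces the mollification step and axiom~(3) handles the Gram--Schmidt algebra. Your write-up is in fact more explicit than the paper's, correctly isolating where each axiom is used and noting that the only bookkeeping is in choosing the approximation parameter $\delta'$ small enough (depending on $K,m,\mathcal{F}$) so that the $L^\infty$-perturbation does not destroy the numerical slack in the almost-unit-length and almost-orthogonality bounds on $\tilde v_{m+1}$.
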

In words, Theorem \ref{partialpositiveanswer} says that as long as the function space $\mathcal{F}$ behaves ``better than'' the space of Lipschitz functions in the sense of (2), is closed under the operations used in the Gram-Schmidt process in the sense of (3), and can approximate Lipschitz functions arbitrarily well (4) (for Carnot groups we accomplished this by convolving with a smooth mollifier), then we have a positive answer to Question \ref{general-lift}.

By applying Theorem \ref{Cj-lifting} several times, we obtain the following.
\begin{corollary}\label{multiple-Cj-lifting}
Let $1\le m\le D-2^{4n_h+7}n_h-m'+1$, $j\ge 1$, and let $\{R_i\}_{i=1}^j$ be a log-concave sequence of positive reals. Let $v_1,\cdots,v_m:G\to \bbs^{D-1}$ form an orthonormal system at each point, with the uniform regularity bound
\[
\sum_{k=1}^j R_k\|\nabla^k v_i\|_{C^0}\le 1,\quad i=1,\cdots, m.
\]
Then there exist $v_{m+1},\cdots,v_{m+m'}:G\to \bbs^{D-1}$ such that $v_1,\cdots,v_m$ along with $v_{m+1},\cdots,v_{m+m'}$ form an orthonormal system at each point, and
\[
\sum_{k=1}^j R_k\|\nabla^k v_i\|_{C^0}\lesssim_{G,m,m',j} 1,\quad i=m+1,\cdots, m+m'.
\]
\end{corollary}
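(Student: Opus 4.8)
The plan is to iterate Theorem \ref{Cj-lifting} exactly $m'$ times, adjoining one new vectorfield at each stage, while carrying along a harmless renormalization of the scales $\{R_k\}$ between stages. The one point that requires care is that each application of Theorem \ref{Cj-lifting} returns a new vectorfield whose regularity is bounded by a constant $\lesssim_{G,\cdot,j}1$ rather than by exactly $1$, so the enlarged orthonormal system no longer satisfies the normalized hypothesis of Theorem \ref{Cj-lifting}. I would fix this not by rescaling the vectorfields (they must remain $\bbs^{D-1}$-valued) but by rescaling the sequence of spatial scales, using the elementary fact that for any $c>0$ the sequence $\{c^k R_k\}_{k=1}^j$ is again log-concave, since $(c^i R_i)(c^{i'} R_{i'})=c^{i+i'}R_iR_{i'}\ge c^{i+i'}R_{i+i'}$.

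Concretely, I would prove by induction on $\ell\in\{0,1,\cdots,m'\}$ the statement: there exist $v_{m+1},\cdots,v_{m+\ell}:G\to\bbs^{D-1}$ making $v_1,\cdots,v_{m+\ell}$ a pointwise orthonormal system with
\[
\sum_{k=1}^j R_k\|\nabla^k v_i\|_{C^0}\le C_\ell,\qquad i=1,\cdots,m+\ell,
\]
where $C_0=1$ and $C_\ell\ge 1$ depends only on $G,m,m',j$. The base case $\ell=0$ is the hypothesis of the corollary. For the inductive step (with $\ell\le m'-1$), set $S_k\coloneqq C_\ell^{-k}R_k$; this is log-concave, and since $C_\ell\ge 1$ one has $C_\ell^{-k}\le C_\ell^{-1}$ for $k\ge 1$, hence $\sum_k S_k\|\nabla^k v_i\|_{C^0}\le C_\ell^{-1}\sum_k R_k\|\nabla^k v_i\|_{C^0}\le 1$ for all $i\le m+\ell$. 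Since $m+\ell\le m+m'-1\le D-2^{4n_h+7}n_h$, Theorem \ref{Cj-lifting} applied with $m+\ell$ in place of $m$ and $\{S_k\}$ in place of $\{R_k\}$ produces $v_{m+\ell+1}:G\to\bbs^{D-1}$ completing the orthonormal system with $\sum_k S_k\|\nabla^k v_{m+\ell+1}\|_{C^0}\lesssim_{G,m+\ell,j}1$. Undoing the rescaling, $\sum_k R_k\|\nabla^k v_{m+\ell+1}\|_{C^0}=\sum_k C_\ell^k S_k\|\nabla^k v_{m+\ell+1}\|_{C^0}\le C_\ell^j\sum_k S_k\|\nabla^k v_{m+\ell+1}\|_{C^0}$, which is again bounded by a constant depending only on $G,m,m',j$; letting $C_{\ell+1}$ be the maximum of this quantity and $C_\ell$ closes the induction.

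Taking $\ell=m'$ then yields the desired $v_{m+1},\cdots,v_{m+m'}$, with all regularity constants $\lesssim_{G,m,m',j}1$ because the index $m+\ell$ passed to Theorem \ref{Cj-lifting} ranges only over the finite set $\{m,\cdots,m+m'-1\}$, so only finitely many of the (already harmless) implied constants are ever invoked. I do not anticipate a genuine obstacle here: the substance is entirely in Theorem \ref{Cj-lifting} (and, underneath it, Theorem \ref{Lip-lifting}), and the corollary is just its iteration plus the scale-renormalization bookkeeping above; the dimension hypothesis $m\le D-2^{4n_h+7}n_h-m'+1$ is precisely what guarantees that every intermediate invocation of Theorem \ref{Cj-lifting} is legitimate.
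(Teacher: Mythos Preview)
Your proof is correct and follows the same approach as the paper, which simply states ``By applying Theorem \ref{Cj-lifting} several times, we obtain the following.'' Your careful scale-renormalization bookkeeping (replacing $R_k$ by $C_\ell^{-k}R_k$ at each stage) is precisely the detail the paper leaves implicit, and it is handled correctly.
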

\begin{remark}
The need for a normal field in embedding problems dates back to \cite{nash1954c1}, where the existence of a section was demonstrated using a homotopy argument from \cite{steenrod1999topology} based on the fact that the base space is contractible. However, such an argument in this situation will fail to control the regularity of the section at points of the base far from the contraction point. In \cite[Section 8]{tao2021embedding} this was achieved for the Heisenberg group $\bbh^3$ by imposing a ``uniform'' CW-structure on $\bbh^3$ and then inductively defining the section starting from low-dimensional skeleta. In the inductive step in \cite{tao2021embedding}, one has to use the fact that the homotopy groups $\pi_i(\bbs^n)$ vanish for $i<n$, which necessitates the ``dimension gap'' $D-m-1\ge 3$. Here we have chosen the section over a 0-skeleton such that the section is locally roughly an orthonormal set and obtained a section by directly interpolating. This allows us to avoid the need for a CW structure by only using the doubling property of the base space, but we have thus increased the dimension gap exponentially.
\end{remark}


\section{Main Iteration Lemma}\label{sec:iterationlemma}
\setcounter{equation}{0}

The starting point of the iterative construction is a function that oscillates at a fixed scale while satisfying a suitable freeness property. (This will also be an ingredient in the inductive step of the iterative construction, when we pass from a larger scale $A^{m+1}$ down to a smaller scale $A^m$.) We begin by constructing this single oscillating function on the Carnot group $G$. (See also Proposition 5.2 of \cite{tao2021embedding} for an analogous statement.)

\begin{proposition}\label{FirstStep}
There exists a smooth map $\phi^0:G\to \bbr^{14^{n_h}}$ with the following properties.
\begin{enumerate}[leftmargin=*]
    \item (smoothness) For all $j\ge 1$,
    \[
    \|\phi^0\|_{C^j}\lesssim_{G,j} 1.
    \]
    \item (locally free embedding) For all $p\in G$, we have
    \begin{equation}\label{firstfree}
    \left|\bigwedge_{\substack{W=X_{r_1,j_1}\cdots X_{r_m,j_m}  \\  1\le m\le s, (r_1,j_1)\preceq \cdots \preceq (r_{m},j_{m})}}W\phi^0(p)\right|\ge 1.
    \end{equation}
\end{enumerate}
\end{proposition}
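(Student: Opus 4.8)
The plan is a Veronese-type construction, globalized over a net. Let $\mathcal{W}$ denote the set of operators $W=X_{r_1,j_1}\cdots X_{r_m,j_m}$ appearing in \eqref{firstfree} and put $N:=|\mathcal{W}|$; a quick count of nondecreasing strings of length $\le s$ over an $n$-letter alphabet, together with the hockey-stick identity, gives $N=\binom{n+s}{s}-1$, and since $k_r\ge 1$ for $1\le r\le s$ one has $n+s\le n_h+1$, so $N<2^{n_h+1}$. After fixing an enumeration $\mathcal{W}=\{W_1,\dots,W_N\}$, I would reduce matters to producing, for some $J\le (5.5)^{n_h}$, a smooth $\phi^0:G\to(\bbr^{N})^{J}=\bbr^{JN}$ with $\|\phi^0\|_{C^j}\lesssim_{G,j}1$ for all $j$ and $\bigl|\bigwedge_{a=1}^{N}W_a\phi^0(p)\bigr|\ge\tfrac12$ for all $p\in G$: since $JN\le (5.5)^{n_h}\cdot 2^{n_h+1}=2\cdot 11^{n_h}\le 14^{n_h}$ (using $n_h\ge 4$), one may pad with zero coordinates up to dimension $14^{n_h}$, and rescaling $\phi^0$ by the constant $2^{1/N}$ then turns $\tfrac12$ into $1$ in \eqref{firstfree} while keeping property (1).

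The first step is a single polynomial ``local model''. The $W_a$ are ordered monomials of length $1$ to $s$ in the basis $\{X_{r,i}\}$ of $\mathfrak{g}$, hence part of a Poincar\'{e}--Birkhoff--Witt basis of $\mathcal{U}(\mathfrak{g})$ and in particular linearly independent as left-invariant differential operators; a standard argument (the functionals $f\mapsto (W_af)(e_G)$ are then linearly independent even on polynomials of bounded degree) produces a polynomial map $V:G\to\bbr^{N}$, in the exponential coordinates, with $W_aV(e_G)=e_a$ for $a=1,\dots,N$, the $a$-th component being found by solving the weighted-triangular system $W_{a'}V^{(a)}(e_G)=\delta_{aa'}$. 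Since $q\mapsto\bigl|\bigwedge_a W_aV(q)\bigr|$ is continuous and equals $1$ at $q=e_G$, I would fix $\rho=\rho(G)\in(0,1)$ with $\bigl|\bigwedge_{a=1}^{N}W_aV(q)\bigr|\ge\tfrac12$ on $B_\rho$.

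To globalize, take a maximal $\rho$-net $\mathcal{N}$ (so the open balls $B_\rho(\gamma)$, $\gamma\in\mathcal{N}$, cover $G$), a fixed smooth $\chi_0:G\to[0,1]$ with $\chi_0\equiv 1$ on $B_\rho$ and $\operatorname{supp}\chi_0\subset B_{5\rho/4}$, and set $\chi_\gamma:=\chi_0\circ L_{\gamma^{-1}}$. By \eqref{locbd} the graph on $\mathcal{N}$ joining net points at distance $<\tfrac94\rho$ has maximum degree $<(5.5)^{n_h}$, so it admits a proper colouring $\mathcal{N}=\bigsqcup_{c=1}^{J}\mathcal{N}^{(c)}$ with $J\le(5.5)^{n_h}$. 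Then define $\phi^0=(\phi^{0,1},\dots,\phi^{0,J})$ by
\[
\phi^{0,c}:=\sum_{\gamma\in\mathcal{N}^{(c)}}\chi_\gamma\cdot(V\circ L_{\gamma^{-1}})=\sum_{\gamma\in\mathcal{N}^{(c)}}(\chi_0 V)\circ L_{\gamma^{-1}}.
\]
Each sum is locally finite with $\lesssim_G 1$ nonzero terms at each point (two summands overlap only for net points within $\tfrac52\rho$, bounded by \eqref{locbd}), so $\phi^0$ is smooth; and since $\nabla^l\bigl((\chi_0V)\circ L_{\gamma^{-1}}\bigr)=(\nabla^l(\chi_0V))\circ L_{\gamma^{-1}}$ by left-invariance and $\chi_0 V$ is a fixed compactly supported smooth map, $\|\nabla^l\phi^0\|_{C^0}\lesssim_{G,l}1$, which is (1). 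For (2): given $p$, choose $\gamma\in\mathcal{N}$ with $p\in B_\rho(\gamma)$; any $\gamma'\in\mathcal{N}^{(c(\gamma))}\setminus\{\gamma\}$ has $d(\gamma',\gamma)\ge\tfrac94\rho$, so $\chi_{\gamma'}\equiv 0$ on $B_\rho(\gamma)$, whence $\phi^{0,c(\gamma)}\equiv V\circ L_{\gamma^{-1}}$ there and $W_a\phi^{0,c(\gamma)}(p)=(W_aV)(\gamma^{-1}p)$. With $\pi$ the orthogonal projection of $\bbr^{JN}$ onto the $c(\gamma)$-th block, the Gram matrix of $(W_a\phi^0(p))_a$ is the sum of that of $(\pi W_a\phi^0(p))_a$ and a positive semidefinite matrix, so by monotonicity of $\det$ on the PSD cone and $\gamma^{-1}p\in B_\rho$,
\[
\Bigl|\bigwedge_{a=1}^{N}W_a\phi^0(p)\Bigr|\ \ge\ \Bigl|\bigwedge_{a=1}^{N}\pi W_a\phi^0(p)\Bigr|\ =\ \Bigl|\bigwedge_{a=1}^{N}(W_aV)(\gamma^{-1}p)\Bigr|\ \ge\ \tfrac12 .
\]
Combined with the reductions of the first paragraph this would complete the proof.

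The point I expect to be the real obstacle is the tension between (1) and (2): (2) forces the translated models $V(\gamma^{-1}\cdot)$ to be ``on'' near every point and hence to overlap, whereas (1) asks for uniform control on a non-compact $G$ which, unlike $\bbh^3$ in \cite{tao2018embedding}, need not admit a cocompact lattice, so one cannot make $\phi^0$ periodic and verify \eqref{firstfree} on a compact quotient. The device above — spreading the finitely many colour classes of a net into disjoint coordinate blocks, so that near each point a single block must reproduce exactly one translated model and the wedge lower bound for the whole map is inherited from that block because orthogonal projection onto a coordinate subspace does not increase the norm of a wedge product — is where I would spend the most care, and it is the only place where the doubling (rather than merely smooth) structure of $G$ is used, via \eqref{locbd}. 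The remaining ingredient, the existence of $V$, is the classical ``a free jet can be prescribed by a triangular linear system'', here packaged through the PBW basis of $\mathcal{U}(\mathfrak{g})$; and the smoothness in (1) is then automatic (had one used a non-polynomial model one would simply mollify and re-orthogonalize, as elsewhere in the paper).
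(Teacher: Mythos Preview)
Your proposal is correct and follows essentially the same architecture as the paper's proof: construct a local polynomial model with the required wedge lower bound near the identity, truncate by a smooth cutoff, and globalize by colouring a net into $O_G(1)$ classes placed in separate coordinate blocks, so that at every point one block reproduces a single translated model and the full wedge inherits the lower bound from that block. The only substantive difference is the choice of local model: the paper writes down the explicit Veronese map $\varphi^0(\exp x)=\bigoplus_{r=1}^s \frac{1}{r!}x^{\otimes r}$ and checks by a weighted-degree triangularity computation that $\bigl|\bigwedge_W W\varphi^0\bigr|=1$ at \emph{every} point (though it still truncates because $\varphi^0$ is unbounded), whereas you invoke PBW to produce an abstract $V$ with $W_aV(e_G)=e_a$ and then use continuity to get a radius $\rho$ on which the wedge stays $\ge\tfrac12$. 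Both routes land in the same place; the paper's explicit map avoids the appeal to PBW and makes the target dimension more transparent, while your argument shows that nothing special about the Veronese is needed beyond the linear independence of the ordered monomials.
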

\begin{proof}
Inspired by the Veronese-type embedding used in \cite{tao2021embedding}, we first begin with the function 
\[
\varphi^0:G\to \bigoplus_{r=1}^s \otimes^r \bbr^{n}=\bbr^{\sum_{r=1}^s\binom{n}{r}},\quad
\varphi^0(\exp(x))=\bigoplus_{r=1}^s \frac{1}{r!}\otimes^r x,\quad x\in \mathfrak{g},
\]
where in the image we identify $\mathfrak{g}$ with $\bbr^n$ via $\sum_{r=1}^s\sum_{i=1}^{k_r}x_{r,i}X_{r,i}\leftrightarrow \sum_{r=1}^s\sum_{i=1}^{k_r}x_{r,i}f_{r,i}$. In these coordinates, we would have
\[
\varphi^0\left(\exp\left(\sum_{r=1}^s\sum_{i=1}^{k_r}x_{r,i}X_{r,i}\right)\right)=\sum_{r=1}^s \frac{1}{r!}\otimes^r \left(\sum_{r=1}^s\sum_{i=1}^{k_r}x_{r,i}f_{r,i}\right),\quad x_{r,i}\in \bbr.
\]
Recall that
\[
X_{r,i}=\frac{\partial}{\partial x_{r,i}}+\sum_{r'>r}^s\sum_{j=1}^{k_{r'}}(\mbox{polynomial in }\{x_{r'',i'}\}_{r''<r'}\mbox{ of weighted degree }r-r')\frac{\partial}{\partial x_{r',j}},
\]
and recall that $X_{r,i}$ acts on polynomials by reducing the weighted degree by $r$. Consequently, for each $(r_1,j_1)\preceq \cdots \preceq (r_{m},j_{m})$, the differential operator $X_{r_1,j_1} \cdots X_{r_{m},j_{m}}$ reduces weighted degrees by $\sum_{i=1}^m r_i$, and so
\begin{align*}
X_{r_1,j_1} \cdots X_{r_{m},j_{m}}\varphi^0=&f_{r_1,j_1}\otimes \cdots \otimes f_{r_{m},j_{m}}\\
&\qquad+\sum_{\substack{(r'_1,j'_1)\preceq \cdots\preceq (r'_{m'},j'_{m'})  \\ r'_1+\cdots+r'_{m'}> r_1+\cdots+r_m \\ m'\le s}}(\mbox{polynomial of degree }\Sigma r'_i-\Sigma r_i)f_{r'_1,j'_1}\otimes \cdots \otimes f_{r'_{m'},j'_{m'}}.
\end{align*}
Thus, when taking the wedge product of all these differentials, we can rearrange the differentials in order of their degree and cancel the higher degree terms. This leads to
\[
\left|\bigwedge_{\substack{W=X_{r_1,j_1}\cdots X_{r_m,j_m}  \\  1\le m\le s, (r_1,j_1)\preceq \cdots \preceq (r_{m},j_{m})}}W\varphi^0(p)\right|=\left|\bigwedge_{\substack{W=X_{r_1,j_1}\cdots X_{r_m,j_m}  \\  1\le m\le s, (r_1,j_1)\preceq \cdots \preceq (r_{m},j_{m})}} f_{r_1,j_1}\otimes \cdots \otimes f_{r_{m},j_{m}}\right|= 1.
\]

Hence, we can create a mapping with the required freeness property. It remains to modify this construction so that we have bounded $C^j$ norms as well. This can be done by viewing $\varphi^0$ as a mapping that works locally around the origin and then pasting together mollifications of $\varphi^0$, using the doubling property of $G$ to ensure that the pasting process only increases the $C^j$ norm by a bounded factor. (This construction is inspired by \cite{assouad1983plongements}.)

Take a smooth function $\eta:G\to [0,1]$ which is identically 1 on the unit ball $B_1$ and which vanishes on $B_{1.5}^c$. Then the function $\varphi^1:G\to \bbr^{\sum_{r=1}^s\binom{n}{r}}$ defined by $\varphi^1=\eta \varphi^0$ has bounded $C^j$ norm for all $j\ge 1$, and satisfies $\left|\bigwedge_{W= X_{r_1,j_1},\cdots,X_{r_m,j_m}}W\varphi^1\right|= 1$ on $B_1$. Now take a maximal 1-net $\mathcal{N}_1$  of $G$. We claim that we can take the decomposition
\[
\mathcal{N}_1=\bigsqcup_{a=1}^{7^{n_h}}\mathcal{N}^a_3,
\]
where $n_h$ is the Hausdorff dimension of $G$, and each $\mathcal{N}^a_3$ is a 3-net of $G$. (This is a standard coloring argument.) Indeed, each point $g\in \mathcal{N}_1$ has at most $7^{n_h}-1$ points in $\mathcal{N}_1$ in its 3-neighborhood by \eqref{locbd}; having inductively assigned a finite number of points of $\mathcal{N}_1$ in one of the $7^{n_h}$ sets \linebreak$\{\mathcal{N}^a_3\}_{a=1}^{7^{n_h}}$, any other point of $\mathcal{N}_1$ can be assigned to one of them consistently.

We now define $\phi^0:G\to\bbr^{7^{n_h}\sum_{r=1}^s\binom{n}{r}}$ as
\[
\phi^0(p)\coloneqq \bigoplus_{a=1}^{7^{n_h}}\sum_{g\in \mathcal{N}^a_3}\varphi^1(g^{-1}p),\quad p\in G.
\]
Then this satisfies the given properties because for each $a=1,\cdots,7^{n_h}$, the function
\[
\sum_{g\in \mathcal{N}^a_3}\varphi^1(g^{-1}p)
\]
is a sum of smooth compactly supported functions whose supports $gB_{1.5}$ are disjoint, thus has bounded $C^j$ norm, and satisfies the wedge product bound
\[
\left|\bigwedge_{\substack{W=X_{r_1,j_1}\cdots X_{r_m,j_m}  \\  1\le m\le s, (r_1,j_1)\preceq \cdots \preceq (r_{m},j_{m})}}W\sum_{g\in \mathcal{N}^a_3}\varphi^1(g^{-1}p)\right|\ge 1,\quad p\in \mathcal{N}^a_3B_1.
\]
As $\{\mathcal{N}^a_3B_1\}_{a=1}^{7^{n_h}}$ covers $G$, we see that $\varphi^0$ satisfies \eqref{firstfree}. As $\sum_{r=1}^s\binom{n}{r}\le 2^n\le 2^{n_h}$ we are done by composing $\phi^0$ with an embedding $\bbr^{7^{n_h}\sum_{r=1}^s\binom{n}{r}}\to \bbr^{14^{n_h}}$.
\end{proof}

\begin{remark}
The method of proof of Proposition \ref{FirstStep} in \cite{tao2021embedding} for the case $G=\bbh^3$, or more generally for the case when $G$ admits a cocompact lattice $\Gamma$, is the following. Since the nilmanifold $G/\Gamma$ is a smooth compact $n$-dimensional manifold, by the strong Whitney immersion theorem \cite{whitney1944singularities} there exists a smooth immersion $G/\Gamma\to \bbr^{2n-1}$. By precomposing with the projection $G\to G/\Gamma$, one obtains a map $\varphi^1:G\to \bbr^{2n-1}$ that satisfies the weaker freeness property
\[
\left|\bigwedge_{r=1}^s\bigwedge_{i=1}^{k_r}X_{r,i}\varphi^1\right|\gtrsim 1
\]
while having bounded $C^j$ norms due to the compactness of $G/\Gamma$. We can then obtain the stronger freeness property \eqref{firstfree} by composing $\varphi^1$ with a Veronese-type embedding, say
\[
\phi^1:G\to\bigoplus_{r=1}^s \otimes^r \bbr^{2n-1}=\bbr^{\sum_{r=1}^s\binom{2n-1}{r}},\quad \phi^1\coloneqq\bigoplus_{r=1}^s \frac{1}{r!}(\varphi^1)^{\otimes r}.
\]
(One can indeed prove the stronger freeness property by using a simple change of coordinates argument.) There is an exponential saving in the target dimension: the target dimension, in this case, is polynomial in the topological dimension of $G$, whereas the target dimension in Proposition \ref{FirstStep} is exponential in the Hausdorff dimension of $G$. Perhaps one could improve the target dimension in Proposition \ref{FirstStep} to be polynomial in the Hausdorff dimension of $G$, say by using random nets and partitions as in \cite{naor2010assouad} and being more careful about how we paste the different embeddings.
\end{remark}

Now we state the inductive step. Having constructed a map $\psi:G\to\bbr^D$ which ``represents'' the geometry of $G$ at scale $A^{m+1}$ and above, we need to construct a correction $\phi:G\to\bbr^D$ which oscillates at scale $A^m$ such that $\psi+\phi$ represents the geometry of $G$ at scale $A^m$. When we say that a mapping represents the geometry of $G$ at scale $A^m$, we mean that its $C_{A^m}^{m^*,2/3}$ norm is controlled and that it has good freeness properties. To make a viable induction argument, we need to show that the quantitative controls on the $C_{A^m}^{m^*,2/3}$ norm and the freeness properties are preserved when we pass from $\psi$ to $\psi+\phi$. By rescaling, we may simply assume $m=0$. The precise statement for the inductive step is as follows.

\begin{proposition}[main iteration step]\label{KeyIteration2}
Let $M$ be a real number with
\[
M\ge C_0^{-1},
\]
and let ${m^*}\ge \max\{3,s^2\}$. Suppose a map $\psi:G\to \bbr^{128\cdot 23^{n_h}}$ obeys the following estimates:
\begin{enumerate}[leftmargin=*]
    \item (nondegenerate first derivatives) For any $p\in G$, we have
    \begin{equation}\label{It-1-again}
        C_0^{-1}M\le |X_i\psi(p)|\le C_0 M,\quad i=1,\cdots,k,
    \end{equation}
    and 
    \begin{equation}\label{It-2-again}
        \left|\bigwedge_{m=1}^i X_{j_m}\psi(p)\right|\ge C_0^{-i^2-i+2} \prod_{m=1}^i \left|X_{j_m}\psi(p)\right|
    \end{equation}
    for $2\le i\le k$ and $1\le j_1<\cdots<j_i\le k$.
    \item (locally free embedding) For any $p\in G$, we have
    \begin{equation}\label{It-3-again}
        \left|\bigwedge_{\substack{W=X_{r_1,j_1}\cdots X_{r_m,j_m}  \\  1\le m\le s, (r_1,j_1)\preceq \cdots \preceq (r_{m},j_{m})}}W\psi(p)\right|\ge C_0^{-2k^2-7k+2}A^{-\sum_{j=2}^s (j-1)\binom{n+j-1}{j}}\prod_{j=1}^k \left|X_j\psi(p)\right|.
    \end{equation}
    \item (H\"older regularity at scale $A$) We have
    \begin{equation}\label{It-4-again}
        \|\nabla^2 \psi\|_{C_A^{{m^*}-2,{2/3}}}\le C_0A^{-1}.
    \end{equation}
\end{enumerate}
Then there exists a map $\phi:G\to\bbr^{128\cdot 23^{n_h}}$ such that $\phi$ obeys the following estimates:
\begin{enumerate}[leftmargin=*]
    \item (regularity at scale 1) We have
    \begin{equation}\label{newholder}
        \|\phi\|_{C^{{m^*},{2/3}}}\lesssim_G 1.
    \end{equation}
    \item (orthogonality) We have
    \begin{equation}\label{neworthog}
        B(\phi,\psi)=0.
    \end{equation}
    \item (nondegenerate first derivatives) We have
    \begin{equation}\label{newnondeg}
        |X_i\phi|\gtrsim_G 1,\quad i=1,\cdots,k.
    \end{equation}
\end{enumerate}
and the sum $\psi+\phi$ obeys the following regularity estimates:
\begin{enumerate}[leftmargin=*]
    \item (nondegenerate first derivatives) For any $p\in G$, we have
    \begin{equation}\label{It-1-again-lower-scale}
        C_0^{-1}\sqrt{M^2+1}\le |X_i(\psi+\phi)(p)|\le C_0 \sqrt{M^2+1},\quad i=1,\cdots,k,
    \end{equation}
    and 
    \begin{equation}\label{It-2-again-lower-scale}
        \left|\bigwedge_{m=1}^i X_{j_m}(\psi+\phi)(p)\right|\ge C_0^{-i^2-i+2} \prod_{m=1}^i \left|X_{j_m}(\psi+\phi)(p)\right|
    \end{equation}
    for $2\le i\le k$ and $1\le j_1<\cdots<j_i\le k$.
    \item (locally free embedding) For any $p\in G$, we have
    \begin{equation}\label{It-3-again-lower-scale}
        \left|\bigwedge_{\substack{W=X_{r_1,j_1}\cdots X_{r_m,j_m}  \\  1\le m\le s, (r_1,j_1)\preceq \cdots \preceq (r_{m},j_{m})}}W(\psi+\phi)(p)\right|\ge C_0^{-2k^2-7k+3}A^{-\sum_{j=2}^s (j-1)\binom{n+j-1}{j}}\prod_{m=1}^i \left|X_{j_m}(\psi+\phi)(p)\right|.
    \end{equation}
    \item (H\"older regularity at scale $1$) We have
    \begin{equation}\label{It-4-again-lower-scale}
        \|\nabla^2 (\psi+\phi)\|_{C^{{m^*}-2,{2/3}}}\le C_0.
    \end{equation}
\end{enumerate}
\end{proposition}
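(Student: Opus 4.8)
\emph{Plan of proof.} The plan is to follow the two-stage scheme of Sections~\ref{sec:NM-perturb}--\ref{sec:ON-ext}: first produce a ``low-frequency'' map $\tilde\phi$ solving the strengthened orthogonality system \eqref{lowfreq-strongperp} and carrying a full freeness property at scale $1$; then correct $\tilde\phi$ by a negligible amount using the Nash--Moser Corollary~\ref{perturbation-cor} to get a genuine solution $\phi$ of $B(\phi,\psi)=0$; finally transfer the estimates from $\psi$ and $\phi$ to $\psi+\phi$. First I would unpack the hypotheses on $P_{(\le N_0)}\psi$: since $A$ is chosen after $N_0$, Theorem~\ref{LP}(iii) and \eqref{It-4-again} give the pointwise derivative bounds of Lemma~\ref{tpsi} (so $|X_iP_{(\le N_0)}\psi|\asymp_{C_0}M$, while the second-order words have size $\lesssim_{C_0}A^{-1}$ at every derivative order), and the Cauchy--Schwarz inequality for wedge products applied to \eqref{It-3-again} extracts both the depth-$2$ freeness \eqref{psi-3} --- so Corollary~\ref{perturbation-cor} is available with $F=0$, $\alpha=\tfrac23$, $m^*\ge3\ge2$ --- and the relative freeness \eqref{freeness-depth2} of the subsystem $\{X_iP_{(\le N_0)}\psi\}\cup\{X_iX_jP_{(\le N_0)}\psi\}_{i\le j}\cup\{X_{2,i'}P_{(\le N_0)}\psi\}$.

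Next I would let $v_1,\dots,v_L$, $L=k+\tfrac{k(k+1)}2+k_2$, be the Gram--Schmidt orthonormalization (after unit normalization) of that subsystem; \eqref{freeness-depth2} and the algebra properties \eqref{first-alg},\eqref{reciprocal},\eqref{squareroot} control the $v_i$ in the rescaled $C^{m^*+1}$-norms at a scale $\asymp A/C_0^{O(1)}\gg1$, so $U(p)\coloneqq\operatorname{span}\{v_1(p),\dots,v_L(p)\}$ contains, up to $O_{C_0}(1/A)$-error, all the up-to-depth-$2$ derivatives of $\psi$ at $p$. As $128\cdot 23^{n_h}$ exceeds $L+14^{n_h}+2^{4n_h+7}n_h$ (using $n_h\ge4$), Corollary~\ref{multiple-Cj-lifting} produces an orthonormal extension $v_{L+1},\dots,v_{L+m'}$, $m'=14^{n_h}$, with the same scale control, and I set $\tilde\phi\coloneqq\sum_{a=1}^{m'}\phi^0_a\,v_{L+a}$ with $\phi^0=(\phi^0_1,\dots,\phi^0_{m'})$ the oscillating free map of Proposition~\ref{FirstStep}. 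Then $\tilde\phi\perp v_1,\dots,v_L$ pointwise, so \eqref{lowfreq-strongperp} (hence \eqref{G-5}) holds by the Leibniz rule; since the $v_{L+a}$ vary at scale $\gg1$ while $\phi^0$ oscillates at scale $1$, every admissible-word derivative obeys $W\tilde\phi=\sum_a(W\phi^0_a)v_{L+a}+O_G(1/A)$ with the remainder \emph{and} the $U$-component of $W\tilde\phi$ both of size $O_G(C_0^{O(1)}/A)$; thus $\|\tilde\phi\|_{C^{m^*,2/3}}\lesssim_G1$, and by orthonormality of the $v_{L+a}$, \eqref{firstfree}, and Cauchy--Schwarz, $|W\tilde\phi|\asymp_G1$ for each admissible $W$ and $\bigl|\bigwedge_W W\tilde\phi\bigr|\ge\tfrac12$. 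Corollary~\ref{perturbation-cor} then produces $\phi\in C^{m^*,2/3}$ with $B(\phi,\psi)=0$ (this is \eqref{neworthog}), $\|\phi-\tilde\phi\|_{C^{m^*,2/3}}\lesssim_{C_0,G}A^{2-m^*}$, and $\|X_i\phi\cdot X_j\psi\|_{C^0}\lesssim_GA^{1-m^*}$; as $A$ is chosen last and $m^*\ge3$, the properties \eqref{newholder}, \eqref{newnondeg}, ``$|W\phi|\asymp_G1$ and $\bigl|\bigwedge_W W\phi\bigr|\ge\tfrac14$'', and the $O_G(1/A)$-smallness of the $U$-component of each $W\phi$ all persist for $\phi$.

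Two of the three estimates for $\psi+\phi$ are then routine. The bound \eqref{It-4-again-lower-scale} follows from $\|\nabla^2(\psi+\phi)\|_{C^{m^*-2,2/3}}\le\|\nabla^2\psi\|_{C^{m^*-2,2/3}}+\|\nabla^2\phi\|_{C^{m^*-2,2/3}}\lesssim_{m^*}C_0A^{-1}+O_G(1)\le C_0$ once $C_0$ is large in $G,m^*$ and $A$ large. Since $B(\phi,\psi)=0$ forces $X_i\psi\cdot X_i\phi=0$, we have $|X_i(\psi+\phi)|^2=|X_i\psi|^2+|X_i\phi|^2$, and \eqref{It-1-again-lower-scale} follows from $|X_i\psi|^2\in[C_0^{-2}M^2,C_0^2M^2]$ and $|X_i\phi|^2\asymp_G1$ once $C_0$ is large in $G$. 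Likewise $B(\phi,\psi)=0$ makes the Gram matrix of $\{X_{j_m}(\psi+\phi)\}$ the sum of those of $\{X_{j_m}\psi\}$ and $\{X_{j_m}\phi\}$; combining \eqref{It-2-again} (valid for every sub-collection) and the first-order freeness of $\phi$ with the Minkowski-type fact that the condition number relative to the diagonal of a sum of positive semidefinite matrices is at least the minimum of those of the summands yields \eqref{It-2-again-lower-scale}, $C_0$ large in $G$.

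The hard part will be \eqref{It-3-again-lower-scale}. I would decompose $\mathbb R^{128\cdot 23^{n_h}}=U\oplus\operatorname{span}\{v_{L+a}\}\oplus(\text{rest})$ orthogonally and note that, for $W=X_i$, $X_i(\psi+\phi)$ has a $U$-part $\approx X_i\psi$ of size $\asymp_{C_0}M$, a $\operatorname{span}\{v_{L+a}\}$-part $\approx\sum_a(X_i\phi^0_a)v_{L+a}$ of size $\asymp_G1$, and an $O_G(1/A)$ remainder, whereas for $\deg W\ge2$, $W(\psi+\phi)$ has only a $\operatorname{span}\{v_{L+a}\}$-part $\approx\sum_a(W\phi^0_a)v_{L+a}$ of size $\asymp_G1$ plus an $O_{C_0}(A^{-1})$ remainder (here $W\psi$ is a horizontal derivative of $\nabla^2\psi$, so $\lesssim_{C_0}A^{-1}$). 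Reading off the component of $\bigwedge_W W(\psi+\phi)$ in the bidegree $(k,\#W-k)$ piece --- $k$ factors from $U$, $\#W-k$ from $\operatorname{span}\{v_{L+a}\}$ --- any contribution assigning a $U$-factor to a word with $\deg W\ge2$ carries a gain $O_{C_0}(1/(AM))$, so this component equals, up to $1+O_{C_0}(1/A)$, the single term $\bigl(\bigwedge_{i=1}^kX_i\psi\bigr)\wedge\bigl(\bigwedge_{\deg W\ge2}\sum_a(W\phi^0_a)v_{L+a}\bigr)$, whose two factors lie in the \emph{complementary} subspaces $\bigwedge^kU$ and $\bigwedge^{\#W-k}\operatorname{span}\{v_{L+a}\}$, so its norm is $\bigl|\bigwedge_iX_i\psi\bigr|\,\bigl|\bigwedge_{\deg W\ge2}W\phi^0\bigr|\gtrsim_{C_0}C_0^{-k^2-k+2}M^k$ by \eqref{It-2-again} and \eqref{firstfree} with Cauchy--Schwarz. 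Since components in distinct bidegrees are orthogonal, $\bigl|\bigwedge_W W(\psi+\phi)\bigr|\gtrsim_{C_0,G}C_0^{-k^2-k+2}M^k$; combined with $\prod_j|X_j(\psi+\phi)|\lesssim_G(C_0M+1)^k$ and $M\ge C_0^{-1}$, for $A$ large in $C_0,G$ this dominates $C_0^{-2k^2-7k+3}A^{-\sum_{j=2}^s(j-1)\binom{n+j-1}{j}}\prod_j|X_j(\psi+\phi)|$, giving \eqref{It-3-again-lower-scale}. The main obstacle, which I expect to absorb most of the genuine work, is exactly this last step: keeping the errors of sizes $C_0A^{-1}$, $N_0^{O(1)}C_0^{O(1)}A^{-1}$, $A^{2-m^*}$ negligible against the main term \emph{uniformly} over the whole range $M\ge C_0^{-1}$ (where the main term, normalized by $\prod_j|X_j(\psi+\phi)|$, is only comparable to that same negative power of $A$ when $M$ is bounded), and carrying out the bidegree bookkeeping; the device that makes it clean is precisely the choice of $\tilde\phi$ as a variable-coefficient combination of the orthonormal extension, which forces the new content of $\phi$ into $U^\perp$.
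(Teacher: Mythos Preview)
Your plan follows the paper's route almost exactly: the paper factors the statement through an auxiliary Lemma~\ref{KeyIteration}, builds $\tilde\phi=U(\phi^0)$ via Lemma~\ref{GoodIsom} (your Gram--Schmidt $+$ Corollary~\ref{multiple-Cj-lifting} step), applies Corollary~\ref{perturbation-cor}, and then verifies \eqref{It-1-again-lower-scale}--\eqref{It-4-again-lower-scale}. Your treatments of \eqref{It-1-again-lower-scale}, \eqref{It-4-again-lower-scale}, and \eqref{It-3-again-lower-scale} match the paper's (the last is the paper's ``Verification of \eqref{It-7}'', carried out via the same $U\oplus U^\perp$ splitting and Cauchy--Binet pairing that you phrase as a bidegree argument).

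The one substantive divergence is your verification of \eqref{It-2-again-lower-scale}. You correctly observe that $B(\phi,\psi)=0$ forces the Gram matrix of $\{X_{j_m}(\psi+\phi)\}$ to be exactly $G_\psi+G_\phi$, and then invoke a ``Minkowski-type fact'' that $\det(A+B)/\prod(A+B)_{ii}\ge\min\bigl(\det A/\prod A_{ii},\,\det B/\prod B_{ii}\bigr)$ for PSD $A,B$. This is not a standard named inequality, and it does \emph{not} follow from Minkowski's determinant inequality: that gives only $\det(A+B)^{1/i}\ge(\det A)^{1/i}+(\det B)^{1/i}$, and combining it with the diagonal bounds runs into the superadditivity $(\prod(a_m+b_m))^{1/i}\ge(\prod a_m)^{1/i}+(\prod b_m)^{1/i}$, which points the wrong way. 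So as written this step is a gap.

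The paper avoids this by proving instead the stronger increment bound \eqref{It-6}, namely $\bigl|\bigwedge X_{j_m}(\psi+\phi)\bigr|^2-\bigl|\bigwedge X_{j_m}\psi\bigr|^2\ge C_0^{-2i^2+5}M^{2(i-1)}$, not from $B(\phi,\psi)=0$ but from the \emph{unsymmetrized} near-orthogonality $|X_i\phi\cdot X_j\psi|\le A^{1-m^*}$ that you already recorded from \eqref{G-9-again}. Concretely, it expands $\bigwedge X_{j_m}(\psi+\phi)$ multilinearly, uses polarized Cauchy--Binet to show all cross-pairings with $\bigwedge X_{j_m}\psi$ are $O_{C_0}(A^{-1}M^{2(i-1)})$, and then lower-bounds the remaining square by wedging against $\bigwedge_{m\ge2}X_{j_m}\phi$ and reducing (again via a block-diagonal Cauchy--Binet, using the same near-orthogonality) to $\bigl|\bigwedge_{m\ge2}X_{j_m}\psi\bigr|^2\cdot\bigl|\bigwedge_m X_{j_m}\phi\bigr|^2$. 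This is where the $(i{-}1)$-fold hypothesis \eqref{It-2-again} is spent to get the $i$-fold conclusion, producing the ``hierarchy'' of constants that closes the induction. Either supply a proof of your matrix fact, or replace that paragraph by this expansion argument using the unsymmetrized bound you already have in hand.
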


\begin{remark}
The dimension $128\cdot 23^{n_h}$ will result from Proposition \ref{FirstStep} along with Corollary \ref{multiple-Cj-lifting}; see Lemma \ref{GoodIsom} below.
\end{remark}

\begin{remark}
Proposition \ref{KeyIteration2} shows that if $\psi$ is a map with the regularity and freeness properties \eqref{It-1-again}-\eqref{It-4-again} at scale $A$, then we can find a correction $\phi$ so that $\psi+\phi$ is a map with the same regularity and freeness properties \eqref{It-1-again-lower-scale}-\eqref{It-4-again-lower-scale} but at scale $1$. In particular, the constants in the freeness properties are the same. We will make this possible by creating a ``hierarchy'' of freeness properties. Namely, the freeness property \eqref{It-2-again-lower-scale} for $i$-fold wedge products of horizontal derivatives of $\psi+\phi$ will be based on the freeness property \eqref{It-2-again} for $(i-1)$-fold wedge products of horizontal derivatives of $\psi$. Also, the freeness property \eqref{It-3-again-lower-scale} for the wedge product of up to $s$-order derivatives of $\psi+\phi$ will be based on the freeness property \eqref{It-2-again} for $k$-fold wedge products of horizontal derivatives of $\psi$. Thus, we do not lose constants when passing from $\psi$ to $\psi+\phi$, which will allow us to close the iteration.
\end{remark}

Proposition \ref{KeyIteration2} will be a consequence of the following lemma, which is a generalization of Proposition 5.1 of \cite{tao2021embedding} for the case $G=\bbh^3$.

\begin{lemma}[main iteration lemma]\label{KeyIteration}
Let $M$ be a real number with
\[
M\ge C_0^{-1},
\]
and let ${m^*}\ge \max\{3,s^2\}$. Suppose a map $\psi:G\to \bbr^{128\cdot 23^{n_h}}$ obeys the following estimates:
\begin{enumerate}[leftmargin=*]
    \item (nondegenerate first derivatives) For any $p\in G$, we have
    \begin{equation}\label{It-1}
        C_0^{-1}M\le |X_i\psi(p)|\le C_0 M,\quad i=1,\cdots,k,
    \end{equation}
    and 
    \begin{equation}\label{It-2}
        \left|\bigwedge_{m=1}^i X_{j_m}\psi(p)\right|\ge C_0^{-i^2-2i+2} M^{i},
    \end{equation}
    for $2\le i\le k$ and $1\le j_1<\cdots<j_i\le k$.
    \item (locally free embedding) For any $p\in G$, we have
    \begin{equation}\label{It-3}
        \left|\bigwedge_{\substack{W=X_{r_1,j_1}\cdots X_{r_m,j_m}  \\  1\le m\le s, (r_1,j_1)\preceq \cdots \preceq (r_{m},j_{m})}}W\psi(p)\right|\ge C_0^{-2k^2-8k+2}A^{-\sum_{j=2}^s (j-1)\binom{n+j-1}{j}}M^{k}.
    \end{equation}
    \item (H\"older regularity at scale $A$) We have
    \begin{equation}\label{It-4}
        \|\nabla^2 \psi\|_{C_A^{{m^*}-2,\alpha}}\le C_0A^{-1}.
    \end{equation}
\end{enumerate}
Then there exists a map $\phi:G\to\bbr^{128\cdot 23^{n_h}}$ obeying the following estimates.
\begin{enumerate}[leftmargin=*]
    \item (nondegenerate first derivatives) For any $p\in G$, we have
    \begin{equation}\label{It-5}
        |X_i\phi(p)|\gtrsim_G 1
    \end{equation}
    and
    \begin{equation}\label{It-6}
        \left|\bigwedge_{m=1}^i X_{j_m}(\psi+\phi)(p)\right|^2-\left|\bigwedge_{m=1}^i X_{j_m}\psi(p)\right|^2\ge C_0^{-2i^2+5}M^{2(2i-1)},
    \end{equation}
    for $2\le i\le k$ and $1\le j_1<\cdots<j_i\le k$.
    \item (locally free embedding) For any $p\in G$, we have
    \begin{equation}\label{It-7}
        \left|\bigwedge_{\substack{W=X_{r_1,j_1}\cdots X_{r_m,j_m}  \\  1\le m\le s, (r_1,j_1)\preceq \cdots \preceq (r_{m},j_{m})}}W(\psi+\phi)(p)\right|\ge C_0^{-2k^2-5k+3}M^{k}.
    \end{equation}
    \item (H\"older regularity at scale 1) We have
    \begin{equation}\label{It-8}
        \|\phi\|_{C^{{m^*},\alpha}}\lesssim_G 1.
    \end{equation}
    \item (orthogonality) We have
    \begin{equation}\label{It-9}
        B(\phi,\psi)=0.
    \end{equation}
\end{enumerate}
\end{lemma}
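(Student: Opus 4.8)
The plan is to build $\phi$ in two stages, exactly mirroring the strategy outlined in the introduction: first produce an approximate orthogonal correction $\tilde\phi$ solving the low-frequency equation $B(\tilde\phi, P_{(\le N_0)}\psi)=0$ (in fact the stronger system \eqref{strongperp}) with good freeness, then invoke Corollary \ref{perturbation-cor} to upgrade $\tilde\phi$ to an exact solution $\phi$ of \eqref{It-9} with controlled perturbation. The starting ingredient is Proposition \ref{FirstStep}, which gives a fixed smooth map $\phi^0:G\to\bbr^{14^{n_h}}$ oscillating at scale $1$ with the strong freeness \eqref{firstfree}. The bookkeeping device (``Lemma \ref{GoodIsom}'', referenced in the remark) should be a pointwise-varying isometry $U(p)$ of $\bbr^{128\cdot 23^{n_h}}$, built from an orthonormal extension, that sends the subspace spanned by the derivatives of $\psi$ (up to order $s$) together with a copy of $\phi^0$ into mutually favorable positions; the dimension count $128\cdot 23^{n_h}$ is precisely what is needed so that Corollary \ref{multiple-Cj-lifting} can be applied to extend the orthonormal frame coming from $\psi$ by enough vectors to host a rescaled copy of $\phi^0$. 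Concretely, I would first Gram--Schmidt the vectors $\{X_i P_{(\le N_0)}\psi\}\cup\{X_iX_j P_{(\le N_0)}\psi\}_{i\le j}\cup\{X_{2,i'}P_{(\le N_0)}\psi\}$ — these are quantitatively independent by the freeness hypotheses \eqref{It-2},\eqref{It-3} combined with Theorem \ref{LP}(iii)-\eqref{lp-5} to pass from $P_{(\le N_0)}\psi$ back to $\psi$ — obtaining orthonormal $v_1,\dots,v_{k(k+1)/2+k_2}$ with $C^{{m^*},\alpha}$ norms $O_{C_0}(1)$ (here the scale change from $A$ to $1$ absorbs the $N_0$-losses from \eqref{lp-1}).

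Next, apply Corollary \ref{multiple-Cj-lifting} with $j={m^*}+1$, $R_k=1$, to extend $v_1,\dots,v_{k(k+1)/2+k_2}$ by $m'=14^{n_h}$ further orthonormal fields $v_{k(k+1)/2+k_2+1},\dots$; the dimension gap condition $D-2^{4n_h+7}n_h-m'+1 \ge k(k+1)/2+k_2$ holds for $D=128\cdot 23^{n_h}$. Then set
\[
\tilde\phi(p) \coloneqq \sum_{a=1}^{14^{n_h}} (\phi^0(p))_a \, v_{k(k+1)/2+k_2+a}(p),
\]
i.e. transplant the fixed free map $\phi^0$ into the orthogonal complement of the derivative-system of $P_{(\le N_0)}\psi$ via the frame just constructed. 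By construction $\tilde\phi\cdot v_i=0$ for $i\le k(k+1)/2+k_2$, so \eqref{strongperp} holds, hence $B(\tilde\phi,P_{(\le N_0)}\psi)=0$; the $C^{{m^*},\alpha}$ bound on $\tilde\phi$ follows from the algebra property \eqref{first-alg}-type inequalities for the $C^{{m^*},\alpha}$ norm together with $\|\phi^0\|_{C^{{m^*}+1}}\lesssim_G 1$ and $\|v_a\|_{C^{{m^*},\alpha}}\lesssim_{C_0}1$. The freeness of $\tilde\phi$ at scale $1$ — the non-degeneracy \eqref{It-5} and the wedge bound \eqref{It-7} — comes from \eqref{firstfree}: since the $v_a$ are orthonormal and orthogonal to the span of the derivatives of $P_{(\le N_0)}\psi$, the polarized Cauchy--Binet formula lets one factor the relevant wedge products of derivatives of $\psi+\tilde\phi$ as a product of (i) a wedge of derivatives of $\psi$, controlled below by \eqref{It-2},\eqref{It-3}, and (ii) a wedge of derivatives of $\tilde\phi$ in the orthogonal complement, controlled below by $|\bigwedge W\phi^0|\ge 1$ and the near-orthogonality $X_i\tilde\phi\cdot X_j P_{(\le N_0)}\psi$ being exactly $0$. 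The Pythagorean-type identity \eqref{It-6} similarly drops out: $|\bigwedge X_{j_m}(\psi+\tilde\phi)|^2 = |\bigwedge X_{j_m}\psi|^2 + (\text{cross terms involving } X\tilde\phi\cdot X P_{(\le N_0)}\psi \text{ and } X\tilde\phi\wedge\cdots)$, where the genuinely orthogonal directions contribute the positive gain $\gtrsim M^{2(2i-1)}$ and the cross terms involving the gradient of $P_{(\le N_0)}\psi$ vanish.

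Finally, upgrade $\tilde\phi$ to $\phi$ via Corollary \ref{perturbation-cor}: the hypotheses there are exactly \eqref{It-1},\eqref{It-4}, and a depth-$2$ freeness \eqref{psi-3} which follows from \eqref{It-2},\eqref{It-3} (the constant loss is absorbed since $C_0$ is chosen after ${m^*}$); we get an exact solution $\phi$ to $B(\phi,\psi)=0$ with $\|\phi-\tilde\phi\|_{C^{{m^*},\alpha}}\lesssim_{C_0} A^{2-{m^*}}\|\tilde\phi\|_{C^{{m^*},\alpha}}$ and $\|X_i\phi\cdot X_j\psi - X_i\tilde\phi\cdot X_j P_{(\le N_0)}\psi\|_{C^0}\le A^{1-{m^*}}\|\tilde\phi\|_{C^{{m^*},\alpha}}$. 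Because ${m^*}\ge\max\{3,s^2\}$ and $A$ is chosen last and enormous, the perturbation $A^{2-{m^*}}\|\tilde\phi\|_{C^{{m^*},\alpha}} = O_{C_0}(A^{2-{m^*}})$ is negligible compared to the lower bounds $\gtrsim_G 1$, $\gtrsim M^k$, etc.\ established for $\psi+\tilde\phi$, so all of \eqref{It-5}--\eqref{It-8} survive the replacement $\tilde\phi\rightsquigarrow\phi$ by the triangle inequality and multilinearity of the wedge product (for \eqref{It-6} one uses that the gained quantity $\ge C_0^{-2i^2+5}M^{2(2i-1)}$ dominates the $A^{1-{m^*}}$-size error after re-running the Cauchy--Binet expansion). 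The H\"older bound \eqref{It-8} at scale $1$ is immediate from $\|\tilde\phi\|_{C^{{m^*},\alpha}}\lesssim_G 1$ and the smallness of the perturbation.

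The main obstacle is the second stage's interaction with the freeness bookkeeping: one must verify that the error term \eqref{G-9-again} from the Nash--Moser perturbation — which controls $X_i\phi\cdot X_j\psi$ only up to $A^{1-{m^*}}\|\tilde\phi\|_{C^{{m^*},\alpha}}$, not to zero — is still small enough when propagated through the (multilinear, but constant-heavy) Cauchy--Binet expansions that produce \eqref{It-6} and \eqref{It-7}. This is where the precise exponents in the freeness constants (the $C_0^{-2i^2+5}$, the $C_0^{-2k^2-5k+3}$, and the $A^{-\sum (j-1)\binom{n+j-1}{j}}$ factor carried from \eqref{It-3} through the isometry) must be chosen so that the chain $\psi \to \psi+\tilde\phi \to \psi+\phi$ neither loses powers of $C_0$ nor accumulates uncontrolled powers of $A$; getting these exponents to line up so that Proposition \ref{KeyIteration2} can subsequently ``close the loop'' is the delicate part, though conceptually it is just careful bookkeeping rather than a new idea.
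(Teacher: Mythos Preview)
Your proposal is correct and follows essentially the same approach as the paper: build $\tilde\phi = U(\phi^0)$ via Gram--Schmidt on the derivative system of $P_{(\le N_0)}\psi$ followed by Corollary \ref{multiple-Cj-lifting}, then upgrade via Corollary \ref{perturbation-cor} and verify the freeness estimates through Cauchy--Binet expansions exploiting the near-orthogonality \eqref{G-9-again}. Two small corrections: the Gram--Schmidt system has $k+\tfrac{k(k+1)}{2}+k_2=\tfrac{k(k+3)}{2}+k_2$ vectors (you dropped the $k$ first-order ones in your count), and you should invoke Corollary \ref{multiple-Cj-lifting} with $R_i=A$ rather than $R_i=1$, so that the extended frame satisfies $\|\nabla v_a\|_{C^{{m^*}}}\lesssim_{N_0}A^{-1}$---this bound is what makes $W\tilde\phi = U(W\phi^0)+O_{N_0}(A^{-1})$ for the higher-order operators $W$ appearing in \eqref{It-7}, without which the freeness of $\phi^0$ cannot be transferred to $\psi+\phi$.
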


We now show why Proposition \ref{KeyIteration2} follows from Lemma \ref{KeyIteration}.

\begin{proof}[proof of Proposition \ref{KeyIteration2} assuming Lemma \ref{KeyIteration}]
Let $\psi$ be as in Proposition \ref{KeyIteration2}.
One can easily verify the hypotheses of Lemma \ref{KeyIteration} for $\psi$, as \eqref{It-2} and \eqref{It-3} each follow from \eqref{It-2-again} and \eqref{It-3-again} combined with \eqref{It-1-again}. Thus, by Lemma \ref{KeyIteration}, there exists a function $\phi:G\to\bbr^{128\cdot 23^{n_h}}$ that satisfies \eqref{newholder}, \eqref{neworthog}, and \eqref{newnondeg} (as these are exactly  \eqref{It-8}, \eqref{It-9}, and \eqref{It-5}, respectively) and the following (which are just restatements of \eqref{It-7} and \eqref{It-8}):
\begin{enumerate}[leftmargin=*]
    \item (nondegenerate first derivatives)
    For $p\in G$, $2\le i\le k$ and $1\le j_1<\cdots<j_i\le k$,
    \begin{equation}\label{It-6-again}
        \left|\bigwedge_{q=1}^i X_{j_q}(\psi+\phi)(p)\right|^2-\left|\bigwedge_{q=1}^i X_{j_q}\psi(p)\right|^2\ge C_0^{-2i^2+5}M^{2(i-1)}.
    \end{equation}
    \item (locally free embedding) For any $p\in G$, we have
    \begin{equation}\label{It-7-again}
        \left|\bigwedge_{\substack{W=X_{r_1,j_1}\cdots X_{r_m,j_m}  \\  1\le m\le s, (r_1,j_1)\preceq \cdots \preceq (r_{m},j_{m})}}W(\psi+\phi)(p)\right|\ge C_0^{-2k^2-5k+3}M^{k}.
    \end{equation}
\end{enumerate}
We now verify that the map $\psi+\phi$ satisfies the properties \eqref{It-1-again-lower-scale} to \eqref{It-4-again-lower-scale}.

\vspace{0.1in}
\noindent \underline{Verification of \eqref{It-1-again-lower-scale}}:
    By \eqref{neworthog}, we have $|X_i(\psi+\phi)|^2=|X_i\psi|^2+|X_i\phi|^2$. But by \eqref{newnondeg} and \eqref{newholder}, we have $|X_i\phi|\sim_G 1$, so we have 
    \begin{equation}\label{C_0_hierarchy-1}
        C_0^{-1}\le |X_i\phi|\le C_0.
    \end{equation}
    (This uses our hierarchy of constants in subsection \ref{sec:hierarchy}, by choosing $C_0$ depending on $G$.)
    Combining these facts with \eqref{It-1-again} we obtain \eqref{It-1-again-lower-scale}.

\vspace{0.1in}
\noindent \underline{Verification of \eqref{It-2-again-lower-scale}}:
    This follows from \eqref{It-6-again}:
    \begin{align}\label{C_0_hierarchy-2}
    \begin{aligned}
        \left|\bigwedge_{q=1}^i X_{j_q}(\psi+\phi)\right|^2&\ge \left|\bigwedge_{q=1}^i X_{j_q}\psi\right|^2+ C_0^{-2i^2+5}M^{2(i-1)}\\
        &\ge C_0^{-2i^2-2i+4}\prod_{q=1}^i\left| X_{j_q}\psi\right|^2+ C_0^{-2i^2+5}M^{2(i-1)}\\
        &\ge C_0^{-2i^2-2i+4}\prod_{q=1}^i\left(\left| X_{j_q}\psi\right|^2+|X_{j_q}\phi|^2\right)\\
        &= C_0^{-2i^2-2i+4}\prod_{q=1}^i| X_{j_q}(\psi+\phi)|^2
    \end{aligned}
    \end{align}
    where in the third inequality we used $|X_{j_q}\psi|\le C_0M$ and $|X_{j_q} \phi|\lesssim_G 1$.

\vspace{0.1in}
\noindent \underline{Verification of \eqref{It-3-again-lower-scale}}:
    This follows from \eqref{It-7-again}:
    \begin{align}\label{C_0_hierarchy-3}
    \begin{aligned}
    \left|\bigwedge_{\substack{W=X_{r_1,j_1}\cdots X_{r_i,j_i}  \\ 1\le i\le s, (r_1,j_1)\preceq \cdots \preceq (r_{i},j_{i})}}W(\psi+\phi)(p)\right|    &\ge C_0^{-2k^2-5k+3}M^{k}\\
    &\ge C_0^{-2k^2-7k+3}\prod_{i=1}^k|X_i(\psi+\phi)(p)|,
    \end{aligned}
    \end{align}
    where in the last inequality we used that $|X_i(\psi+\phi)|^2=|X_i\psi|^2+|X_i\phi|^2\le C_0^2M^2+O_G(1)\le C_0^4M^2$ (recall that $M\ge C_0^{-1}$), and our hierarchy of constants where we choose $C_0$ depending on $G$.

\vspace{0.1in}
\noindent \underline{Verification of \eqref{It-4-again-lower-scale}}:
    This follows from \eqref{It-4-again} and \eqref{newholder}:
    \begin{equation}\label{C_0_hierarchy-4}
        \|\nabla^2 (\psi+\phi)\|_{C^{{m^*}-2,{2/3}}}\le \|\nabla^2 \psi\|_{C_A^{{m^*}-2,{2/3}}}+\|\phi_0\|_{C^{{m^*},{2/3}}}\le C_0,
    \end{equation}
    and our hierarchy of constants where we choose $C_0$ depending on $G$.
\end{proof}

For the rest of the section, we will prove Lemma \ref{KeyIteration}.

Suppose that $\psi$ is as in Lemma \ref{KeyIteration}. We will first construct a solution $\Tilde{\phi}$ to the low-frequency equation \eqref{G-5} as
\begin{equation}\label{IsomCompose}
    \Tilde{\phi}(p)=U(p)\left(\phi^0(p)\right),
\end{equation}
where $\phi^0:G\to \bbr^{14^{n_h}}$  is as in Proposition \ref{FirstStep}, and $U(p):\bbr^{14^{n_h}}\to \bbr^{128\cdot 23^{n_h}}$ is a linear isometry with the following properties, which is constructed using Corollary \ref{multiple-Cj-lifting}. (See also Lemma 9.1 of \cite{tao2021embedding} for an analogous statement for $\bbh^3$.)

\begin{lemma}\label{GoodIsom}
There exists $U:G\to\hom(\bbr^{14^{n_h}},\bbr^{128\cdot 23^{n_h}})$ such that
\begin{enumerate}[leftmargin=*]
    \item For each $p\in G$, $U(p)\in \hom(\bbr^{14^{n_h}},\bbr^{128\cdot 23^{n_h}})$ is an isometry.
    \item For each $p\in G$, $s\in \bbr^{14^{n_h}}$, we have
    \begin{equation}\label{IsomPerp}
        (U(p)(s))\cdot X_iP_{(\le N_0)}\psi(p)=0,~ (U(p)(s))\cdot X_iX_jP_{(\le N_0)}\psi(p)=0,\quad 1\le i, j\le k.
    \end{equation}
    \item We have the smoothness
    \[
    \|\nabla U\|_{C^{{m^*}}}\lesssim_{N_0}\frac{1}{A}.
    \]
\end{enumerate}
\end{lemma}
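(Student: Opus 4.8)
The plan is to apply Corollary \ref{multiple-Cj-lifting} to extend a suitable orthonormal system of vectorfields derived from the low-frequency derivatives of $\psi$. First I would record the raw data: from the hypotheses \eqref{It-1}--\eqref{It-4} of Lemma \ref{KeyIteration} together with the Littlewood--Paley regularity estimates in Theorem \ref{LP}(iii)-\eqref{lp-1} and \eqref{lp-5}, one deduces that the vectors $\{X_iP_{(\le N_0)}\psi(p)\}_{i=1}^k$, $\{X_iX_jP_{(\le N_0)}\psi(p)\}_{1\le i\le j\le k}$, $\{X_{2,i'}P_{(\le N_0)}\psi(p)\}_{i'=1}^{k_2}$ are uniformly (quantitatively) linearly independent at every $p$, with norms controlled by $M$ (for the first-order ones) and by $A^{-1}$ (for the second-order ones), and with the $C_A^{m^*}$-type control on their derivatives that the hypothesis \eqref{It-4} provides. (Here I use, as in the proof of Lemma \ref{tpsi} and in the discussion around \eqref{freeness-depth2}, that $P_{(\le N_0)}\psi$ approximates $\psi$ up to errors that are negligible once $N_0$ is fixed after $C_0$ and $A$ is fixed last.) Applying the Gram--Schmidt process to this ordered list yields an orthonormal system $w_1,\dots,w_{m_0}:G\to\bbs^{D-1}$, with $m_0=k+\frac{k(k+1)}{2}+k_2$ and $D=128\cdot 23^{n_h}$; the algebra properties \eqref{first-alg}, \eqref{reciprocal}, \eqref{unit-alg} and their higher-regularity analogues in subsection \ref{sec:funcspace}, together with the quantitative independence, give the rescaled regularity bound $\sum_{k'=1}^{m^*}R_{k'}\|\nabla^{k'}w_i\|_{C^0}\le 1$ where $R_{k'}=c\,A^{-k'}$ for an appropriate small constant $c=c(G,C_0)$ (the factor $A^{-k'}$ reflecting that the $w_i$ oscillate at the large scale $A$, and the $c$ absorbing the $O_{C_0}(1)$ losses from Gram--Schmidt). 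This is a log-concave sequence of spatial scales, so Corollary \ref{multiple-Cj-lifting} applies provided the dimension gap is large enough.

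Next I would invoke Corollary \ref{multiple-Cj-lifting} with $m=m_0$ and $m'=14^{n_h}$: since $D=128\cdot 23^{n_h}\ge 2^{4n_h+7}n_h + m_0 + 14^{n_h} - 1$ for all relevant $n_h$ (this is the numerical check justifying the choice of the ambient dimension $128\cdot 23^{n_h}$, using $m_0\le 2^{n_h}\cdot\text{poly}$, which is dominated by the exponential terms), we obtain an orthonormal extension $w_{m_0+1},\dots,w_{m_0+14^{n_h}}:G\to\bbs^{D-1}$ with $\sum_{k'=1}^{m^*}R_{k'}\|\nabla^{k'}w_i\|_{C^0}\lesssim_{G,m_0,m^*}1$, i.e. $\|\nabla^{k'}w_i\|_{C^0}\lesssim_{G,N_0}A^{-k'}$ for $k'=1,\dots,m^*$ and $i=m_0+1,\dots,m_0+14^{n_h}$ (with the dependence on $N_0$ entering through the constant $c$ above, as is consistent with the hierarchy of constants). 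I would then define $U(p):\bbr^{14^{n_h}}\to\bbr^{D}$ by sending the $l$-th standard basis vector to $w_{m_0+l}(p)$, $l=1,\dots,14^{n_h}$. Property (1) is immediate since the $w_{m_0+l}(p)$ are orthonormal. Property (2), the equation \eqref{IsomPerp}, follows because each $w_{m_0+l}(p)$ is orthogonal to $w_1(p),\dots,w_{m_0}(p)$, whose span contains every $X_iP_{(\le N_0)}\psi(p)$ and, using the commutator identities \eqref{struc_const} together with the inclusion of the $\{X_{2,i'}P_{(\le N_0)}\psi(p)\}$, every $X_iX_jP_{(\le N_0)}\psi(p)$ (this is exactly the reduction from \eqref{pre-strongperp} to \eqref{strongperp} made at the start of Section \ref{sec:ON-ext}). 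Property (3), the smoothness bound $\|\nabla U\|_{C^{m^*}}\lesssim_{N_0}A^{-1}$, is just the statement that $\|\nabla^{k'}w_{m_0+l}\|_{C^0}\lesssim_{N_0}A^{-1}\cdot A^{-(k'-1)}\le A^{-1}$ for $1\le k'\le m^*+1$ — wait, one should be slightly careful: to control $\nabla U$ in $C^{m^*}$ one needs $\nabla^{k'}w_i$ for $1\le k'\le m^*+1$, so I would simply run Corollary \ref{multiple-Cj-lifting} with $j=m^*+1$ rather than $j=m^*$, which changes nothing in the dimension count and yields $\|\nabla^{k'}w_i\|_{C^0}\lesssim_{G,N_0}A^{-k'}$ for $1\le k'\le m^*+1$, hence $\|\nabla U\|_{C^{m^*}}=\sum_{k'=1}^{m^*}\|\nabla^{k'+1-1}(\nabla U)\|_{C^0}\lesssim_{N_0}A^{-1}$.

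The main obstacle, and the only place where genuine work is needed, is the first step: verifying the \emph{quantitative} linear independence of the full low-frequency system $\{X_iP_{(\le N_0)}\psi\}\cup\{X_iX_jP_{(\le N_0)}\psi\}_{i\le j}\cup\{X_{2,i'}P_{(\le N_0)}\psi\}$ with constants that are uniform in $p$ and compatible with the hierarchy, and extracting from it a \emph{scale-correct} regularity bound on the Gram--Schmidt output. The independence itself is essentially \eqref{N_0_hierarchy-6} from the proof of Lemma \ref{tpsi} (derived there from \eqref{It-3}$\leftrightarrow$\eqref{psi-3} via Cauchy--Binet and the Littlewood--Paley approximation \eqref{lp-5}), so the real bookkeeping is to track how the $O_{C_0}(1)$ and $A^{-1}$ factors propagate through Gram--Schmidt — using repeatedly the algebra inequalities \eqref{first-alg}, \eqref{reciprocal}, \eqref{unit-alg} and their $C^m_R$-analogues — to land on exactly the hypothesis format $\sum_{k'}R_{k'}\|\nabla^{k'}w_i\|_{C^0}\le 1$ required by Corollary \ref{multiple-Cj-lifting} for some log-concave $\{R_{k'}\}$ proportional to $\{A^{-k'}\}$. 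Everything after that — the dimension arithmetic and the three verifications — is routine. I expect this proof to closely parallel the proof of Lemma 9.1 of \cite{tao2018embedding}, with the only substantive difference being the use of Corollary \ref{multiple-Cj-lifting} (which rests on the doubling-metric-space argument of Theorem \ref{Lip-lifting} and its Carnot-group refinement Theorem \ref{Cj-lifting}) in place of Tao's CW-complex construction.
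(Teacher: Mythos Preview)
Your overall strategy matches the paper's proof closely: rescale and Gram--Schmidt the low-frequency derivative system to get an orthonormal frame, then invoke Corollary~\ref{multiple-Cj-lifting} to produce $14^{n_h}$ further orthonormal vectors, and assemble these into the columns of $U$. The dimension arithmetic and the verifications of properties (1)--(3) are exactly as in the paper.

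However, there is a concrete error in your choice of the log-concave weight sequence. You take $R_{k'}=c\,A^{-k'}$ and then claim the output satisfies $\|\nabla^{k'}w_i\|_{C^0}\lesssim_{N_0}A^{-k'}$; these two statements are inconsistent (with $R_{k'}=cA^{-k'}$ the corollary would only return the useless bound $\|\nabla^{k'}w_i\|\lesssim A^{k'}$). If instead you meant $R_{k'}=cA^{k'}$, that sequence is log-concave but the \emph{input} hypothesis $\|\nabla^{k'}w_i\|\lesssim A^{-k'}$ fails for $k'\ge m^*-1$: the Littlewood--Paley projection $P_{(\le N_0)}$ injects frequency content up to $N_0$, so once you exceed the number of derivatives controlled directly by \eqref{It-4} you pick up powers of $N_0$ rather than further powers of $A^{-1}$. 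Concretely, for the second-order pieces $w_i=AX_iX_jP_{(\le N_0)}\psi$ one only gets $\|\nabla^{k'}w_i\|_{C^0}\lesssim N_0^{k'+2-m^*}A^{2-m^*}$ for $k'\ge m^*-1$, which is not $O(A^{-k'})$.

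The paper resolves this by using the \emph{constant} weight sequence $R_{k'}=A$ for all $k'$ (equivalently, the mixed norm $\|\cdot\|_{C^0}+A\|\nabla\cdot\|_{C^{m^*}}$ of the form \eqref{logconcave}). This is log-concave, and the input bound $\|\nabla^{k'}w_i\|_{C^0}\lesssim_{N_0}A^{-1}$ for $1\le k'\le m^*+1$ \emph{does} hold (the $N_0$-powers are absorbed into the implied constant, which is permissible in the hierarchy). Corollary~\ref{multiple-Cj-lifting} then returns the same bound for the extension, and $\|\nabla U\|_{C^{m^*}}\lesssim_{N_0}A^{-1}$ follows directly. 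So your plan goes through once you replace your $R_{k'}$ by the constant sequence $R_{k'}\equiv cA$ with $c=c(G,C_0,N_0)$.
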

\begin{proof}
Let $W_1,\cdots, W_{\frac{k(k+3)}{2}+k_2}$ denote the rescaled differential operators
\[
((M^{-1}X_i)_{i=1}^k,(AX_iX_j)_{1\le i\le j\le k},(AX_{2,i})_{i=1}^{k_2}).
\]
Let $w_i(p)\coloneqq W_iP_{(\le N_0)}\psi(p)$ for $1\le i\le \frac{k(k+3)}{2}+k_2$ for all $p\in G$. Then, by \eqref{It-1}, \eqref{It-4}, and Theorem \ref{LP} (3) (more specifically, \eqref{lp-5} with $j=1$, $l=2$, and recalling $m^*\ge 3$),
\begin{equation}\label{N_0_hierarchy-1}
    w_i=W_i\psi+O_G\left(\frac{C_0}{AN_0 M}\right)=O(C_0),\quad i=1,\cdots, k,
\end{equation}
and by \eqref{It-4}  and Theorem \ref{LP} (3) (more specifically, \eqref{lp-5} with $j=2$, $l=3$),
\begin{equation}\label{N_0_hierarchy-2}
    w_i=W_i\psi+O_G\left(\frac{C_0}{AN_0}\right)=O(C_0),\quad i=k+1,\cdots,\frac{k(k+3)}{2}+k_2.
\end{equation}
(Note that we have used our hierarchy of constants in \eqref{N_0_hierarchy-1} and \eqref{N_0_hierarchy-2}, choosing $N_0$ depending on $G$ and $C_0$, while we have only used $A\ge 1$ so far.)

On the other hand, \eqref{It-3} tells us that
\[
    \left|\bigwedge_{i=1,\cdots,\frac{k(k+3)}{2}+k_2}W_i\psi(p)\right|\gtrsim_{C_0} 1,
\]
so by applying the triangle inequality and Cauchy--Schwarz inequality, we conclude that
\[
    \left|\bigwedge_{i=1,\cdots,\frac{k(k+3)}{2}+k_2}w_i(p)\right|\gtrsim_{C_0} 1.
\]
By applying Cauchy--Schwarz again, we conclude that
\begin{equation}\label{wp-lb}
    \left|\bigwedge_{i=1,\cdots,j}w_i(p)\right|\sim_{C_0}  1,\quad j=1,\cdots, \frac{k(k+3)}{2}+k_2.
\end{equation}

Again, using \eqref{It-4} along with Theorem \ref{LP} (3), we can see that 
\begin{equation}\label{N_0_hierarchy-3}
    \|w_i\|_{C_0}+A\|\nabla w_i\|_{C^{{m^*}}} \lesssim_{N_0}1, \quad j=1,\cdots, \frac{k(k+3)}{2}+k_2,
\end{equation}
because for $i=1,\cdots, k,$ we use \eqref{It-4}, and \eqref{lp-1} with $l=2$, $j=m^*$, to see
\begin{align}\label{N_0_hierarchy-4}
    \begin{aligned}
        \|w_i\|_{C_0}+A\|\nabla w_i\|_{C^{{m^*}}}&\stackrel{\mathclap{\eqref{N_0_hierarchy-1}}}{\lesssim} C_0+\frac AM \|\nabla^2 P_{(\le N_0)}\psi\|_{C^{m^*}}\le C_0+\frac {AN_0^{m^*}}M \|\nabla^2 P_{(\le N_0)}\psi\|_{C^{m^*}_{1/N_0}}\\
        & \stackrel{\mathclap{\eqref{lp-1}}}{\lesssim}_{G,m^*} C_0+ \frac {AN_0^{m^*}}M \|\nabla^2\psi\|_{C^0}\stackrel{\eqref{It-4}}{\le} C_0+ \frac {AN_0^{m^*}}M \cdot C_0A^{-1}\lesssim_{N_0}1,
    \end{aligned}
\end{align}
and for $i=k+1,\cdots,\frac{k(k+3)}{2}+k_2,$ we use \eqref{It-4}, and \eqref{lp-1} with $l=3$, $j=m^*$, to see
\begin{align}\label{N_0_hierarchy-5}
    \begin{aligned}
        \|w_i\|_{C_0}+A\|\nabla w_i\|_{C^{{m^*}}}&\stackrel{\mathclap{\eqref{N_0_hierarchy-2}}}{\lesssim} C_0+A^2 \|\nabla^3 P_{(\le N_0)}\psi\|_{C^{m^*}}\le C_0+A^2 N_0^{m^*} \|\nabla^3 P_{(\le N_0)}\psi\|_{C^{m^*}_{1/N_0}}\\
        & \stackrel{\mathclap{\eqref{lp-1}}}{\lesssim}_{G,m^*} C_0+ A^2N_0^{m^*} \|\nabla^3\psi\|_{C^0}\stackrel{\eqref{It-4}}{\le} C_0+ A^2N_0^{m^*} \cdot C_0A^{-2}\lesssim_{N_0}1.
    \end{aligned}
\end{align}
(We have used our hierarchy of constants in \eqref{N_0_hierarchy-3}, \eqref{N_0_hierarchy-4}, and \eqref{N_0_hierarchy-5}, by choosing $N_0$ after $G$ and $C_0$.)
Now, because the norm of \eqref{N_0_hierarchy-3} is of the form \eqref{logconcave}, we can apply the product rule to this norm: for instance,
\[
\left\|\bigwedge_{i=1,\cdots,j}w_i\right\|_{C_0}+A\left\|\nabla\bigwedge_{i=1,\cdots,j} w_i\right\|_{C^{{m^*}}}\lesssim_{N_0}1, \quad j=1,\cdots, \frac{k(k+3)}{2}+k_2.
\]
Now let us consider the orthonormal system $v_1,\cdots, v_{\frac{k(k+3)}{2}+k_2}$ formed by applying the Gram-Schmidt process to the vectors $w_i$, i.e., we inductively define
\[
v_i\coloneqq \frac{|\bigwedge_{j<i}w_j|}{|\bigwedge_{j\le i}w_j|}\left(w_i-\sum_{j<i}(w_i\cdot v_j)v_j\right).
\]
By \eqref{wp-lb} this is well-defined, and by a repeated application of the aforementioned product rule, one can deduce the smoothness
\[
\|v_i\|_{C_0}+A\|\nabla v_i\|_{C^{{m^*}}}\lesssim_{N_0}1,\quad i=1,\cdots, \frac{k(k+3)}{2}+k_2.
\]

We now apply Corollary \ref{multiple-Cj-lifting} with $m=\frac{k(k+3)}{2}$, $m'=14^{n_h}$, $j={m^*}+1$, $R_i=A$ to the above $v_i$. This is possible because
\[
\frac{k(k+3)}{2}+k_2+14^{n_h}+2^{4n_h+7}n_h\le \frac 12 n_h^2+\frac 32 n_h+14^{n_h}+2^{4n_h+7}n_h\le 128\cdot 23^{n_h}\quad(\mathrm{since~} n_h\ge 4).
\]
Thus, we have maps $v_{\frac{k(k+3)}{2}+k_2+1},\cdots,v_{\frac{k(k+3)}{2}+k_2+14^{n_h}}:G\to \bbr^{128\cdot 23^{n_h}}$ such that
\[
\|v_i\|_{C_0}+A\|\nabla v_i\|_{C^{{m^*}}}\lesssim_{N_0}1,\quad i=1,\cdots, \frac{k(k+3)}{2}+k_2+14^{n_h}
\]
and such that $v_1(p),\cdots, v_{\frac{k(k+3)}{2}+k_2+14^{n_h}}(p)$ are orthonormal for all $p\in G$.

Now define $U(p):\mathbb{R}^{14^{n_h}}\to\mathbb{R}^{128\cdot 23^{n_h}}$, $p\in G$, to be the map
\[
U(p)(s)=\sum_{i=1}^{14^{n_h}}s_iv_{\frac{k(k+3)}{2}+k_2+i}(p).
\]
This clearly has the properties (1) and (3) asserted above, and we can also deduce property (2) once we note that $X_iX_j-X_jX_i\in \mathrm{span}\{X_{2,1},\cdots,X_{2,k_2}\}$.
\end{proof}

Continuing with the proof of Lemma \ref{KeyIteration}, let $\Tilde{\phi}$ be as in \eqref{IsomCompose}. By Lemma \ref{GoodIsom} and Proposition \ref{FirstStep}(1), and using our hierarchy of choosing $A$ after $C_0$ and $N_0$, we have
\begin{equation}\label{A_hierarchy_1}
    \|\Tilde{\phi}\|_{C^{{m^*},{2/3}}}\lesssim \|\Tilde{\phi}\|_{C^{{m^*}+1}}\lesssim_G 1,
\end{equation}
where we take $\alpha=\frac 23$. Also, by \eqref{IsomPerp} and the Leibniz rule, it is clear that $\tilde{\phi}$ satisfies \eqref{lowfreq-strongperp} and a fortiori solves the low-frequency equation \eqref{G-5}. It is also clear that $\psi$ satisfies the hypothesis of Corollary \ref{perturbation-cor}. By applying Corollary \ref{perturbation-cor}, there exists a $C^{{m^*},{2/3}}$-function $\phi:G\to\bbr^{128\cdot 23^{n_h}}$ such that
\begin{align}
B(\phi,\psi)&=0,\nonumber\\
    \|\phi-\Tilde{\phi}\|_{C^{{m^*},{2/3}}}&\lesssim_{C_0} A^{2-{m^*}},\label{G-8-again-again}
\end{align}
and, as $\tilde{\phi}$ satisfies \eqref{lowfreq-strongperp},
\begin{equation}\label{NearOrtho}
    \|X_i\phi\cdot X_j\psi\|_{C^0}=\|X_i\phi\cdot X_j\psi-X_i\Tilde{\phi}\cdot X_jP_{(\le N_0)}\psi\|_{C^0}\le A^{1-{m^*}},\quad 1\le i,j\le k.
\end{equation}
It remains to verify conditions \eqref{It-5}-\eqref{It-9}. Conditions \eqref{It-8} and \eqref{It-9} are immediate from the construction. For later use, we note that
\begin{equation}\label{decomp}
X_i\phi=X_i\tilde{\phi}+X_i(\phi-\tilde{\phi})=U(X_i\phi^0)+(X_iU)\left(\phi^0\right)+O_{C_0}\left(A^{2-{m^*}}\right)=U(X_i\phi^0)+O_{N_0}\left(A^{-1}\right)
\end{equation}
for $i=1,\cdots,k$. From this and Proposition \ref{FirstStep}, we immediately have (again using our hierarchy of constants that $A$ is chosen after $N_0$)
\begin{equation}\label{A_hierarchy-3}
    |X_i\phi|\sim_G 1,\quad \left|\bigwedge_{i=1}^k X_i\phi\right|\sim_G 1,
\end{equation}
so in particular \eqref{It-5} immediately follows. It remains to verify \eqref{It-6} and \eqref{It-7}.

\vspace{0.1in}
\noindent \underline{Verification of \eqref{It-6}}:
    Recall from \eqref{NearOrtho} that $|X_i\phi\cdot X_j\psi|\le \frac{1}{A}$ for $i,j=1,\cdots,k$.
    Now we observe that for $1\le j_1<\cdots<j_i\le k$ we have the expansion
    \[
    \bigwedge_{m=1}^i X_{j_m}(\psi+\phi)=\bigwedge_{m=1}^i X_{j_m}\psi+\sum_{n=1}^{2^{i}-1}\bigwedge_{m=1}^i X_{j_m} f_m^n,
    \]
    for some sequence $\{f_m^n\}_{n=1,\cdots,2^i-1,~m=1,\cdots, i}$ of functions, each being either $\phi$ or $\psi$. Note that for each $n$ there must exist some $m$ such that $f_m^n=\phi$. This expansion implies
    \begin{align*}
        &\left|\bigwedge_{m=1}^i X_{j_m}(\psi+\phi)\right|^2-\left|\bigwedge_{m=1}^i X_{j_m}\psi\right|^2=\left|\sum_{n=1}^{2^{i}-1}\bigwedge_{m=1}^i X_{j_m} f_m^n\right|^2+2\sum_{n=1}^{2^{i}-1}\bigg< \bigwedge_{m=1}^i X_{j_m}\psi,\bigwedge_{m=1}^i X_{j_m} f_m^n\bigg>.
    \end{align*}
    For each fixed $n$, the polarized Cauchy--Binet formula shows that $\bigg< \bigwedge_{m=1}^i X_{j_m}\psi,\bigwedge_{m=1}^i X_{j_m} f_m^n\bigg>$ can be represented as the determinant of an $i\times i$ matrix, whose entries are each of magnitude $O_{C_0}(M^2)$ and one of whose columns (the $m$-th column, where $m$ is such that $f_m^n=\phi$) consists of entries of magnitude $O(A^{-1})$ (because of \eqref{NearOrtho}). Thus, the determinant of this $i\times i$ matrix is of magnitude $O_{C_0}(A^{-1}M^{2(i-1)})$, or equivalently $\bigg< \bigwedge_{m=1}^i X_{j_m}\psi,\bigwedge_{m=1}^i X_{j_m} f_m^n\bigg>=O_{C_0}(A^{-1}M^{2(i-1)})$. Summing over all $n$, we obtain
    \[
    2\sum_{n=1}^{2^{i}-1}\bigg< \bigwedge_{m=1}^i X_{j_m}\psi,\bigwedge_{m=1}^i X_{j_m} f_m^n\bigg>=O_{C_0}(A^{-1}M^{2(i-1)}),
    \]
    and so (again using our hierarchy of choosing $A$ after $C_0$) it is enough to show that 
    \begin{equation}\label{A_hierarchy-5}
        \left|\sum_{n=1}^{2^{i}-1}\bigwedge_{m=1}^i X_{j_m} f_m^n\right|^2\ge C_0^{-2i^2+5.5}M^{2(i-1)}.
    \end{equation}
    But by Cauchy--Schwarz,
    \begin{align*}
        \left|\sum_{n=1}^{2^{i}-1}\bigwedge_{m=1}^i X_{j_m} f_m^n\right|^2 &\asymp \left|\sum_{n=1}^{2^{i}-1}\bigwedge_{m=1}^i X_{j_m} f_m^n\right|^2\left|\bigwedge_{m=2}^i X_{j_m}\phi \right|^2\ge \left|\left(\sum_{n=1}^{2^{i}-1}\bigwedge_{m=1}^i X_{j_m} f_m^n\right)\wedge \bigwedge_{m=2}^i X_{j_m}\phi \right|^2\\
        &= \left|\left(X_{j_1}\phi\wedge \bigwedge_{m=2}^i X_{j_m} \psi \right)\wedge \bigwedge_{m=2}^i X_{j_m}\phi \right|^2= \left|\bigwedge_{m=2}^i X_{j_m} \psi \wedge \bigwedge_{m=1}^i X_{j_m}\phi \right|^2.
    \end{align*}
    By the Cauchy--Binet formula, this is the determinant of a certain $(2i-1)\times (2i-1)$ matrix, which, by \eqref{NearOrtho}, is close to being block-diagonal: the upper-left $(i-1)\times (i-1)$ block consists of entries of size $O_{C_0}(M^2)$, the lower-right $i\times i$ block consists of entries of size $O_{C_0}(1)$, while the off-block-diagonal entries are of size $O(A^{-1})$ (by \eqref{NearOrtho}). Therefore, we may estimate the total determinant with the determinant of the block diagonal approximation, with error $O_{C_0}(A^{-2}M^{2(i-2)})$:
    \[
    \left|\bigwedge_{m=2}^i X_{j_m} \psi \wedge \bigwedge_{m=1}^i X_{j_m}\phi \right|^2=\left|\bigwedge_{m=2}^i X_{j_m} \psi \right|^2\left| \bigwedge_{m=1}^i X_{j_m}\phi \right|^2+O_{C_0}(A^{-2}M^{2(i-2)}).
    \]
    But by \eqref{It-2}, we have
    \begin{equation}\label{C_0_A_hierarchy-1}
        \left|\bigwedge_{m=2}^i X_{j_m} \psi \right|^2 \ge C_0^{-2i^2+6}M^{2(i-1)}.
    \end{equation}
    This completes the verification of \eqref{It-6} (again using the hierarchy that $C_0$ is chosen after $G$ and $A$ is chosen after $C_0$).
    
\vspace{0.1in}
\noindent \underline{Verification of \eqref{It-7}}:
    We first observe that for differential operators $W$ of degree at least 2, $W\phi$ dominates $W\psi$, and so we may approximate $\bigwedge_W W(\psi+\phi)$ by $\bigwedge_W W\phi$, where $W$ ranges over such operators.
    
    More precisely, for $W=X_{r_1,j_1}\cdots X_{r_m,j_m}$, where either $m=1$ and $r_1\ge 2$ or $2\le m\le s$, we have
    \begin{align*}
        W(\psi+\phi)&=W\tilde{\phi}+W(\phi-\tilde{\phi})+W\psi\\
        &=W\Big(U(\phi^0)\Big)+O_{C_0}(A^{2-{m^*}})+O_{C_0}(A^{-1})\\
        &=W\Big(U(\phi^0)\Big)+O_{C_0}(A^{-1}).
    \end{align*}
    (The second equation holds because our choice of ${m^*}\ge s^2$ allows us to use our bounds \eqref{G-8-again-again} and \eqref{It-4} on $\|\phi-\tilde{\phi}\|_{C^{{m^*}}}$ and $\|\nabla^2\psi\|_{C_A^{{m^*}-2}}$). Many applications of the Leibniz rule tell us that $W\Big(U(\phi^0)\Big)-U(W\phi^0)$ is a linear combination of derivatives of $U$ times $\phi^0$ or derivatives of $\phi^0$, so from $\|\nabla U\|_{C^{{m^*}}}\lesssim_{N_0}A^{-1}$, we have
    \[
    W\Big(U(\phi^0)\Big)-U(W\phi^0)=O_{N_0}(A^{-1})
    \]
    and consequently
    \[
    W(\psi+\phi)=U(W\phi^0)+O_{N_0}(A^{-1}).
    \]
    Therefore, we have
    \begin{equation}\label{higher-deriv-approx}
        \bigwedge_{\substack{W=X_{r_1,j_1}\cdots,X_{r_m,j_m} \\ m=1 \mbox{ and }r_1\ge 2, \mbox{ or } 2\le m\le s}}W(\psi+\phi)=\omega+O_{N_0}(A^{-1}),
    \end{equation}
    where
    \[
    \omega\coloneqq \bigwedge_{\substack{W=X_{r_1,j_1}\cdots,X_{r_m,j_m} \\ m=1 \mbox{ and }r_1\ge 2, \mbox{ or } 2\le m\le s}}U(W\phi^0).
    \]
    Since $U$ is an isometry, and $\phi^0$ has the freeness property \eqref{firstfree}, we have
    \begin{equation}\label{omega_is_free}
    |\omega|= \left|\bigwedge_{\substack{W=X_{r_1,j_1}\cdots,X_{r_m,j_m} \\ m=1 \mbox{ and }r_1\ge 2, \mbox{ or } 2\le m\le s}}W\phi^0\right|\asymp_G 1.
    \end{equation}
    Therefore, by \eqref{It-1}, \eqref{C_0_hierarchy-1}, \eqref{higher-deriv-approx} and our hierarchy that $C_0$ is chosen after $G$ and that $A$ is chosen after $C_0$ and $N_0$, to verify \eqref{It-7} it is enough to verify
    \begin{equation}\label{C_0_A_hierarchy-2}
        \left|\bigwedge_{i=1}^kX_i(\psi+\phi)\wedge \omega \right|\gtrsim  C_0^{-2k^2-5k+3.3}M^{k}.
    \end{equation}
    By Cauchy--Schwarz, and
    \[
    \left|\bigwedge_{i=1}^k X_iP_{(\le N_0)}\psi\right|\le \prod_{i=1}^k\left|X_iP_{(\le N_0)}\psi\right|\stackrel{\mathclap{\eqref{lp-1}}}{\lesssim}_{G} \left\|\nabla\psi\right\|_{C^0}^k\le C_0^{k}M^{k},
    \]
    (we used \eqref{lp-1} with $l=1$), we see (using our hierarchy that $C_0$ is chosen after $G$) that it is enough to verify
    \begin{equation}\label{C_0-hierarchy-5}
        \bigg< \bigwedge_{i=1}^k X_iP_{(\le N_0)}\psi\wedge \omega,\bigwedge_{i=1}^k X_i(\psi+\phi)\wedge \omega \bigg>\gtrsim C_0^{-2k^2-4k+3.7}M^{2k}.
    \end{equation}
    Since all the components of $\omega$ are orthogonal to the vectors $X_iP_{(\le N_0)}\psi$, we can use Cauchy--Binet twice to see that the left-hand side is equal to
    \[
    \bigg< \bigwedge_{i=1}^k X_iP_{(\le N_0)}\psi,\bigwedge_{i=1}^k X_i(\psi+\phi) \bigg>|\omega|^2.
    \]
    Using that for $1\le i,j\le k$ we have
    \begin{align}\label{N_0_hierarchy-7}
    \begin{aligned}
        X_iP_{(\le N_0)}\psi\cdot X_j(\psi+\phi)&=X_i\psi\cdot X_j\psi+X_i\psi\cdot X_j\phi\\
        &\stackrel{\mathclap{\eqref{NearOrtho}}}{=}~X_i\psi\cdot X_j\psi-X_iP_{(>N_0)}\psi\cdot X_j(\psi+\phi)+O(A^{-1}),
        \\ &\stackrel{\mathclap{\eqref{It-1},\eqref{C_0_hierarchy-1}, \eqref{lp-5}}}{=}\quad\quad\quad X_i\psi\cdot X_j\psi+ O_G\left(\frac{C_0}{AN_0}\right)\cdot (O(C_0M)+O(C_0))+O(A^{-1}),
        \\ &=X_i\psi\cdot X_j\psi+ O(C_0A^{-1}M),
    \end{aligned}
    \end{align}
    (where in the penultimate equation we used \eqref{lp-5} with $j=1$, $l=2$, and in the last equation we used our hierarchy that $N_0$ is chosen after $C_0$), we see using Cauchy--Binet twice that
    \[
    \bigg< \bigwedge_{i=1}^kX_iP_{(\le N_0)}\psi,\bigwedge_{i=1}^k X_i(\psi+\phi) \bigg> =\left|\bigwedge_{i=1}^k X_i\psi\right|^2+O_{C_0}(A^{-1}M^{2k-1}).
    \]
    But, from \eqref{It-2}, we have
    \begin{equation}\label{C_0_A_hierarchy-3}
        \left|\bigwedge_{i=1}^k X_i\psi\right|^2\ge C_0^{-2k^2-4k+4}M^{2k}.
    \end{equation}
    The claim \eqref{C_0-hierarchy-5} follows from the above, \eqref{omega_is_free}, and our hierarchy of choosing $C_0$ after $G$ and $A$ after $C_0$.

This concludes the proof of Lemma \ref{KeyIteration}.


\section{Construction of the embedding}\label{sec:applyiteration}
\setcounter{equation}{0}

One obtains the following proposition by repeating Proposition \ref{KeyIteration2} a finite number of times (see also Claim 5.4 of \cite{tao2021embedding} for an analogous statement for $\bbh^3$).

\begin{proposition}[finite iteration]\label{FiniteIteration}
Let $0<\varepsilon\le 1/A$, and let $M_1\le M_2$ be integers. One can find maps $\phi_m:G\to\bbr^{128\cdot 23^{n_h}}$ for $M_1\le m\le M_2$ obeying the following bounds, where $\phi_{(\ge m)}:G\to\bbr^{128\cdot 23^{n_h}}$ is defined by
\[
\phi_{(\ge m)}\coloneqq \sum_{m\le m'\le M_2}A^{-\varepsilon(m'-m)}\phi_{m'}.
\]
\begin{enumerate}[leftmargin=*]
    \item (smoothness at scale $A^m$) For all $M_1\le m\le M_2$, we have
    \begin{align}
        \|\phi_m\|_{C_{A^m}^{s^2+s+1}}&\le C_0 A^m \label{Finite-1}\\
        \|\nabla^2 \phi_{(\ge m)}\|_{C^{s^2+s-1,{2/3}}_{A^{m}}}&\le C_0 A^{-m}.\label{Finite-2}
    \end{align}
    \item (orthogonality) One has, for all $M_1\le m\le M_2$,
    \begin{equation}\label{Finite-3}
        \sum_{m'>m}A^{-\varepsilon(m'-m)}B(\phi_m,\phi_{m'})=0.
    \end{equation}
    \item (nondegeneracy) For all $p\in G$ and $M_1\le m\le M_2$, we have the estimates
    \begin{equation}\label{Finite-4}
        |X_i\phi_m(p)|\ge C_0^{-1},\quad i=1,\cdots,k,
    \end{equation}
    \begin{equation}\label{Finite-5}
        \left|\bigwedge_{q=1}^i X_{j_q}\phi_{(\ge m)}(p)\right|\ge C_0^{-i^2-i+2}\prod_{q=1}^i |X_{j_q}\phi_{(\ge m)}(p)|,\quad 2\le i\le k,~1\le j_1<\cdots<j_i\le k,
    \end{equation}
    \begin{equation}\label{Finite-6}
        \left|\bigwedge_{\substack{W=X_{r_1,j_1}\cdots X_{r_i,j_i}  \\  1\le i\le s, (r_1,j_1)\preceq \cdots \preceq (r_{i},j_{i})}}W\phi_{(\ge m)}(p)\right|\ge C_0^{-2k^2-7k+3}A^{-m\sum_{j=2}^s (j-1)\binom{n+j-1}{j}}\prod_{i=1}^k|X_i\phi_{(\ge m)}(p)|.
    \end{equation}
\end{enumerate}
\end{proposition}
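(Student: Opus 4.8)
The proof is by downward induction on $m$, from $m=M_2$ down to $m=M_1$, applying Proposition \ref{KeyIteration2} (in rescaled form) once at each step. The reason the induction closes uniformly in $M_1,M_2$ is that Proposition \ref{KeyIteration2} sends data satisfying \eqref{It-1-again}--\eqref{It-4-again} at scale $A$ to data satisfying the \emph{same} estimates \eqref{It-1-again-lower-scale}--\eqref{It-4-again-lower-scale} at scale $1$, with the same constants --- indeed the freeness hypothesis \eqref{It-3-again} is slightly weaker than the freeness conclusion \eqref{It-3-again-lower-scale}, so the freeness constant stabilizes after the first step --- while the one quantity that varies, the amplitude $M$, obeys the benign law that scale-$A$ amplitude $M$ becomes scale-$1$ amplitude $\sqrt{M^{2}+1}$, and so stays above $C_0^{-1}$ throughout.

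\emph{Base case $m=M_2$.} Since $\phi_{(\ge M_2)}=\phi_{M_2}$, it suffices to produce one map oscillating at scale $A^{M_2}$ with the freeness \eqref{Finite-4}--\eqref{Finite-6}. Take $\phi_{M_2}:=A^{M_2}\,\iota\circ\phi^0\circ\delta_{A^{-M_2}}$, where $\phi^0$ is the map of Proposition \ref{FirstStep} and $\iota:\bbr^{14^{n_h}}\hookrightarrow\bbr^{128\cdot 23^{n_h}}$ an isometric inclusion. Then \eqref{Finite-1}--\eqref{Finite-2} follow from $\|\phi^0\|_{C^j}\lesssim_{G,j}1$ and $\nabla^j(f\circ\delta_\lambda)=\lambda^j(\nabla^j f)\circ\delta_\lambda$, while \eqref{Finite-3} is vacuous. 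For \eqref{Finite-4}--\eqref{Finite-6} one starts from \eqref{firstfree}, uses the Cauchy--Schwarz inequality of subsection \ref{sec:linalg} to isolate the relevant sub-wedge, bounds the complementary wedge above by a product of $C^0$-norms of derivatives of $\phi^0$ (each $O_G(1)$), tracks the powers of $A$ coming from the dilation, and absorbs the $O_G(1)$ losses into $C_0$ (chosen large depending on $G$).

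\emph{Inductive step.} Given $\phi_{m+1},\dots,\phi_{M_2}$, apply the $\delta_{A^m}$-rescaled Proposition \ref{KeyIteration2}: the substitution $p\mapsto\delta_{A^m}p$ turns ``scale $1$ versus scale $A$'' into ``scale $A^m$ versus scale $A^{m+1}$'' and replaces $\|\cdot\|_{C^j}$, $\|\cdot\|_{C^{m^*,2/3}}$ by $\|\cdot\|_{C^j_{A^m}}$, $\|\cdot\|_{C^{m^*,2/3}_{A^m}}$, which are exactly the normalizations in \eqref{Finite-1}--\eqref{Finite-6}. Feed it $\psi:=A^{-\varepsilon}\phi_{(\ge m+1)}$; its hypotheses come from the level-$(m+1)$ bounds --- \eqref{It-4-again} from \eqref{Finite-2}, and \eqref{It-1-again}, \eqref{It-2-again} from \eqref{Finite-4}, \eqref{Finite-5}, where one first uses the diagonal of the orthogonality \eqref{Finite-3} to write $|X_i\phi_{(\ge m+1)}|^2=\sum_{m'\ge m+1}A^{-2\varepsilon(m'-m-1)}|X_i\phi_{m'}|^2$, giving $|X_i\psi|\asymp_{C_0}M$ with $M:=A^{-\varepsilon}(\sum_{l=0}^{M_2-m-1}A^{-2\varepsilon l})^{1/2}\ge A^{-\varepsilon}\ge A^{-1/A}\ge C_0^{-1}$ (using $\varepsilon\le 1/A$ and that $A$ is chosen last), and \eqref{It-3-again} from \eqref{Finite-6}. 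Proposition \ref{KeyIteration2} returns $\phi$ with $B(\phi,\psi)=0$, with $\|\phi\|_{C^{m^*,2/3}_{A^m}}\lesssim_G A^m$ and $|X_i\phi|\gtrsim_G 1$, and with $\psi+\phi$ obeying \eqref{It-1-again-lower-scale}--\eqref{It-4-again-lower-scale}. Set $\phi_m:=\phi$, so that $\phi_{(\ge m)}=\phi_m+A^{-\varepsilon}\phi_{(\ge m+1)}=\psi+\phi$; then \eqref{Finite-1}, \eqref{Finite-2}, \eqref{Finite-4}, \eqref{Finite-5}, \eqref{Finite-6} are the rescaled forms of \eqref{newholder}, \eqref{It-4-again-lower-scale}, \eqref{newnondeg}, \eqref{It-2-again-lower-scale}, \eqref{It-3-again-lower-scale}, and \eqref{Finite-3} is $0=B(\phi_m,A^{-\varepsilon}\phi_{(\ge m+1)})=A^{-\varepsilon}\sum_{m'>m}A^{-\varepsilon(m'-m-1)}B(\phi_m,\phi_{m'})$ by bilinearity of $B$.

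The step requiring the most care is the constant bookkeeping in the inductive step: one must check that the $C_0$-powers in \eqref{Finite-5}--\eqref{Finite-6} and the $A^{-m(\cdots)}$ power in \eqref{Finite-6} are reproduced \emph{exactly}, not merely up to an $m$-independent loss, under the passage from level $m+1$ to level $m$, since otherwise the constants would drift over the $M_2-M_1+1$ iterations. This is exactly what the matched form of the hypotheses and conclusions of Proposition \ref{KeyIteration2} guarantees (the ``hierarchy of freeness'' of the remark following it, deriving the $i$-fold freeness of $\psi+\phi$ from the $(i-1)$-fold freeness of $\psi$ and the top-order freeness from the $k$-fold freeness of $\psi$), so that the only net effect of iterating is the monotone growth of $M$. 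The remaining checks --- the interaction of $\nabla$, $B$, and the wedge products with $\delta_\lambda$, and the fact that $A^{-\varepsilon}=1-O((\log A)/A)$ is essentially $1$ --- are routine.
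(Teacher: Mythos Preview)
Your proof is correct and follows essentially the same approach as the paper. The paper phrases the argument as induction on $M_2-M_1$, first exploiting scale-invariance of the statement to normalize $M_1=0$ and then applying Proposition~\ref{KeyIteration2} directly at scales $1$ and $A$; you instead keep $M_1$ general and absorb the rescaling into each application of Proposition~\ref{KeyIteration2}. These are equivalent ways of organizing the same computation, and your identification of the constant bookkeeping (that the $C_0$-exponents in \eqref{Finite-5}--\eqref{Finite-6} must be reproduced exactly rather than up to a multiplicative loss) as the crux of the induction is exactly right.
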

\begin{proof}
We prove this by induction on $M_2-M_1$. Note that the statement is invariant under rescaling, so we may assume $M_1=0$. The case $M_2=0$ follows directly from Proposition \ref{FirstStep}, so we may assume $M_2>0$. By the inductive hypothesis applied to $M_1'=1$ and $M_2'=M_2$, we may find functions $\phi_m$, $1\le m\le M_2$, so that 
\begin{enumerate}[leftmargin=*]
    \item (smoothness at scale $A^n$) For all $1\le m\le M_2$, we have
    \begin{align}
        \|\phi_m\|_{C_{A^m}^{s^2+s+1}}&\le C_0 A^m\label{Finite-7}\\
        \|\nabla^2 \phi_{(\ge m)}\|_{C^{s^2+s-1,{2/3}}_{A^m}}&\le C_0 A^{-m}\label{Finite-8}
    \end{align}
    \item (orthogonality) One has, for all $1\le m\le M_2$,
    \begin{equation}\label{Finite-9}
        \sum_{m'>m}A^{-\varepsilon(m'-m)}B(\phi_m,\phi_{m'})=0.
    \end{equation}
    \item (nondegeneracy) For all $p\in G$ and $1\le m\le M_2$, we have the estimates
    \begin{equation}\label{Finite-10}
        |X_i\phi_m(p)|\ge C_0^{-1},\quad i=1,\cdots,k,
    \end{equation}
    \begin{equation}\label{Finite-11}
        \left|\bigwedge_{q=1}^i X_{j_q}\phi_{(\ge m)}(p)\right|\ge C_0^{-i^2-i+2}\prod_{q=1}^i |X_{j_q}\phi_{(\ge m)}(p)|,\quad 2\le i\le k,~1\le j_1<\cdots<j_i\le k,
    \end{equation}
    \begin{equation}\label{Finite-12}
        \left|\bigwedge_{\substack{W=X_{r_1,j_1}\cdots X_{r_i,j_i}  \\  1\le i\le s, (r_1,j_1)\preceq \cdots \preceq (r_{i},j_{i})}}W\phi_{(\ge m)}(p)\right|\ge C_0^{-2k^2-7k+3}A^{-m\sum_{j=2}^s (j-1)\binom{n+j-1}{j}}\prod_{i=1}^k|X_i\phi_{(\ge m)}(p)|.
    \end{equation}
\end{enumerate}
We now verify the hypotheses of Proposition \ref{KeyIteration2} for
\[
\psi\coloneqq A^{-\varepsilon}\phi_{(\ge 1)}=\sum_{1\le m\le M_2}A^{-\varepsilon m}\phi_m,\quad \mathrm{with~} M=\left(\sum_{1\le m\le M_2}A^{-2\varepsilon m}\right)^{1/2},~{m^*}=s^2+s+1, ~\alpha=\frac 23.
\]
We have $M\ge A^{-\varepsilon}\ge A^{-1/A}\ge \frac 12$, so we have $M\ge C_0^{-1}$.

\vspace{0.1in}
\noindent \underline{Verification of \eqref{It-1-again}}:
    From \eqref{Finite-9} and \eqref{Finite-10} we have, for $i=1,\cdots,k$,
    \[
    |X_i\psi|^2=\sum_{1\le m\le M_2}A^{-2\varepsilon m}|X_i\phi_m|^2\ge \sum_{1\le m\le M_2}A^{-2\varepsilon m}C_0^{-2}=C_0^{-2}M^2,
    \]
    and from \eqref{Finite-7} we have, for $i=1,\cdots,k$,
    \[
    |X_i\psi|^2=\sum_{1\le m\le M_2}A^{-2\varepsilon m}|X_i\phi_m|^2\le \sum_{1\le m\le M_2}A^{-2\varepsilon m}C_0^{2}=C_0^{2}M^2.
    \]
\vspace{0.1in}
\noindent \underline{Verification of \eqref{It-2-again}}:
    From \eqref{Finite-11} we have, for $2\le i\le k$ and $1\le j_1<\cdots<j_i\le k$,
    \begin{align*}
        \left|\bigwedge_{q=1}^i X_{j_q}\psi\right|&=A^{-i\varepsilon}\left|\bigwedge_{q=1}^i X_{j_q}\phi_{(\ge 1)}\right|\ge A^{-i\varepsilon}C_0^{-i^2-i+2}\prod_{q=1}^i\left| X_{j_q}\phi_{(\ge 1)}\right|=C_0^{-i^2-i+2}\prod_{q=1}^i\left| X_{j_q}\psi\right|.
    \end{align*}
\vspace{0.1in}
\noindent \underline{Verification of \eqref{It-3-again}}:
    From \eqref{Finite-12} we have
    \begin{align*}
        \left|\bigwedge_{\substack{W=X_{r_1,j_1}\cdots X_{r_m,j_m}  \\  1\le m\le s, (r_1,j_1)\preceq \cdots \preceq (r_{m},j_{m})}}W\psi(p)\right|&=A^{-\sum_{m=1}^s \binom{n+m-1}{m}\varepsilon}\left|\bigwedge_{\substack{W=X_{r_1,j_1}\cdots X_{r_m,j_m}  \\  1\le m\le s, (r_1,j_1)\preceq \cdots \preceq (r_{m},j_{m})}}W\phi_{(\ge 1)}\right|\\
        &\ge A^{-\sum_{m=1}^s \binom{n+m-1}{m}\varepsilon}C_0^{-2k^2-7k+3}A^{-m\sum_{j=2}^s (j-1)\binom{n+j-1}{j}}\prod_{i=1}^k|X_i\phi_{(\ge m)}(p)|\\
        &= A^{-(\sum_{m=1}^s \binom{n+m-1}{m}-k)\varepsilon}C_0^{-2k^2-7k+3}A^{-m\sum_{j=2}^s (j-1)\binom{n+j-1}{j}}\prod_{i=1}^k|X_i\psi(p)|\\
        &\ge C_0^{-2k^2-7k+2}A^{-m\sum_{j=2}^s (j-1)\binom{n+j-1}{j}}\prod_{i=1}^k|X_i\psi(p)|.
    \end{align*}
\vspace{0.1in}
\noindent \underline{Verification of \eqref{It-4-again}}:
    From \eqref{Finite-8} we have
    \[
    \|\nabla^2 \psi\|_{C_A^{s^2+s-1,{2/3}}}=A^{-\varepsilon}\|\nabla^2 \phi_{(\ge 1)}\|_{C^{s^2+s-1,{2/3}}_{A}}\le A^{-\varepsilon}C_0 A^{-1}\le  C_0A^{-1}.
    \]
Hence, $\psi$ and $M$ satisfy the assumptions of Proposition \ref{KeyIteration2}, and so there exists a function $\phi_0:G\to\bbr^{128\cdot 23^{n_h}}$ that satisfies the following:
\begin{enumerate}[leftmargin=*]
    \item (regularity at scale 1) We have
    \begin{equation}\label{newholder-prime}
        \|\phi_0\|_{C^{s^2+s+1,{2/3}}}\lesssim_G 1.
    \end{equation}
    \item (orthogonality) We have
    \begin{equation}\label{neworthog-prime}
        B(\phi_0,\psi)=0.
    \end{equation}
    \item (nondegenerate first derivatives) For any $p\in G$, we have
    \begin{equation}\label{newnondeg-prime}
        |X_i\phi_0(p)|\gtrsim_G 1,\quad i=1,\cdots,k,
    \end{equation}
    \begin{equation}\label{It-1-again-lower-scale-prime}
        C_0^{-1}\sqrt{M^2+1}\le |X_i(\psi+\phi_0)(p)|\le C_0 \sqrt{M^2+1},\quad i=1,\cdots,k,
    \end{equation}
    \begin{equation}\label{It-2-again-lower-scale-prime}
        \left|\bigwedge_{m=1}^i X_{j_m}(\psi+\phi_0)(p)\right|\ge C_0^{-i^2-i+2} \prod_{m=1}^i \left|X_{j_m}(\psi+\phi_0)(p)\right|
    \end{equation}
    for $2\le i\le k$ and distinct $1\le j_1<\cdots<j_i\le k$.
    \item (locally free embedding) For any $p\in G$, we have
    \begin{equation}\label{It-3-again-lower-scale-prime}
        \left|\bigwedge_{\substack{W=X_{r_1,j_1}\cdots X_{r_m,j_m}  \\  1\le m\le s, (r_1,j_1)\preceq \cdots \preceq (r_{m},j_{m})}}W(\psi+\phi_0)(p)\right|\ge C_0^{-2k^2-7k+3}A^{-\sum_{j=2}^s (j-1)\binom{n+j-1}{j}}\prod_{m=1}^i \left|X_{j_m}(\psi+\phi_0)(p)\right|.
    \end{equation}
    \item (H\"older regularity at scale $1$) We have
    \begin{equation}\label{It-4-again-lower-scale-prime}
        \|\nabla^2 (\psi+\phi_0)\|_{C^{s^2+s-1,{2/3}}}\le C_0.
    \end{equation}
\end{enumerate}
Note that $\phi_{(\ge 0)}=\psi+\phi_0$. To verify that the larger family of maps $\{\phi_m\}_{0\le m\le M_2}$ satisfies the properties \eqref{Finite-1} to \eqref{Finite-6}, we need only verify these properties for $m=0$. But, for $m=0$, \eqref{Finite-1} follows directly from \eqref{newholder-prime} and \eqref{Finite-4} follows from \eqref{newnondeg-prime}, while \eqref{Finite-2}, \eqref{Finite-3}, \eqref{Finite-5}, and \eqref{Finite-6} are precisely conditions \eqref{It-4-again-lower-scale-prime}, \eqref{neworthog-prime}, \eqref{It-2-again-lower-scale-prime}, and \eqref{It-3-again-lower-scale-prime}, respectively.
\end{proof}

By taking the limit $M_1\to-\infty$, $M_2\to\infty$, we now obtain a full set of lacunary maps. (See also Theorem 4.1 of \cite{tao2021embedding} for an analogous statement for $\bbh^3$.)

\begin{theorem}[Maps oscillating at lacunary scales]\label{lacunary}
Let $0<\varepsilon\le 1/A$. Then one can find a map $\phi_m:G\to \bbr^{128\cdot 23^{n_h}}$ for each integer $m$ obeying the following bounds:
\begin{itemize}[leftmargin=*]
    \item (smoothness at scale $A^n$) For all integers $m$, one has
    \begin{equation*}
        \|\phi_m\|_{C^{s^2+s}_{A^m}}\lesssim_{C_0} A^m.
    \end{equation*}
    In particular, we have
    \begin{equation}\label{D-5}
        X_{r,i}\phi_m(p)=O_{C_0}(A^{-m(r-1)}),\quad \mbox{for all }r,i.
    \end{equation}
    \item (orthogonality) For all integers $m$, one has
    \begin{equation*}
        \sum_{m'>m} A^{-\varepsilon(m'-m)}B(\phi_m,\phi_{m'})=0
    \end{equation*}
    identically on $G$. (By \eqref{D-5} this sum is absolutely convergent.)
    
    \item (nondegeneracy and immersion) For all integers $m$ and all $p\in G$, one has
    \begin{equation*}
        |X_i\phi_m(p)|\gtrsim_{C_0}1
    \end{equation*}
    and
    \begin{equation}\label{D-8}
        \left|\bigwedge_{\substack{W=X_{r_1,j_1}\cdots X_{r_i,j_i}  \\  1\le i\le s, (r_1,j_1)\preceq \cdots \preceq (r_{i},j_{i})}}W\phi_{(\ge m)}(p)\right|\gtrsim_{C_0} \prod_{\substack{W=X_{r_1,j_1}\cdots X_{r_i,j_i}  \\  1\le i\le s, (r_1,j_1)\preceq \cdots \preceq (r_{i},j_{i})}}\left|W\phi_{(\ge m)}(p)\right|,
    \end{equation}
    where
    \[
    \phi_{(\ge m)}(p)=\sum_{m'\ge m} A^{-\varepsilon(m'-m)}\phi_{m'}.
    \]
\end{itemize}

\end{theorem}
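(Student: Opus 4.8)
The plan is to produce $\{\phi_m\}_{m\in\bbz}$ as a subsequential limit of the finite families furnished by Proposition~\ref{FiniteIteration}. For each integer $L\ge 1$ I would apply Proposition~\ref{FiniteIteration} with $M_1=-L$, $M_2=L$, obtaining maps $\phi^{(L)}_m\colon G\to\bbr^{128\cdot 23^{n_h}}$ for $-L\le m\le L$ satisfying \eqref{Finite-1}--\eqref{Finite-6}. The point is that none of the constants in Proposition~\ref{FiniteIteration} depend on $M_1,M_2$, so for each fixed $m$ the bound \eqref{Finite-1} yields $\|\nabla^j\phi^{(L)}_m\|_{C^0}\lesssim_{C_0}A^{m(1-j)}$ for $0\le j\le s^2+s+1$, uniformly over $L\ge|m|$. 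Fixing a compact exhaustion of $G\cong\bbr^n$, I would run the Arzel\`a--Ascoli theorem on each compact set together with a diagonal argument over $m\in\bbz$ to extract a subsequence $L_1<L_2<\cdots$ along which $\phi^{(L_t)}_m\to\phi_m$ in $C^{s^2+s}_{\mathrm{loc}}$ for every $m$, for some limiting maps $\phi_m\colon G\to\bbr^{128\cdot 23^{n_h}}$. The loss of exactly one derivative here (uniform $C^{s^2+s+1}$ control giving only $C^{s^2+s}$ convergence) is precisely why Proposition~\ref{FiniteIteration} was stated with $s^2+s+1$ derivatives.

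I would then verify that the limiting family inherits the asserted properties. The smoothness bound $\|\phi_m\|_{C^{s^2+s}_{A^m}}\lesssim_{C_0}A^m$ and the nondegeneracy $|X_i\phi_m(p)|\gtrsim_{C_0}1$ are closed under $C^{s^2+s}_{\mathrm{loc}}$ convergence, the supremum over $G$ being recovered from the compact exhaustion, and the consequence \eqref{D-5} follows exactly as stated there. For the orthogonality, $B(\cdot,\cdot)$ is bilinear in the horizontal gradients; \eqref{Finite-1} gives $|X_i\phi^{(L)}_{m'}|\lesssim_{C_0}1$, so $\sum_{m'>m}A^{-\varepsilon(m'-m)}B(\phi^{(L)}_m,\phi^{(L)}_{m'})$ converges absolutely and uniformly in $L$, and one may pass to the limit in \eqref{Finite-3} by a three-epsilon argument --- truncating the sum at a large fixed level, passing to the limit in the finitely many remaining terms, and bounding the tails uniformly in $L_t$ --- obtaining $\sum_{m'>m}A^{-\varepsilon(m'-m)}B(\phi_m,\phi_{m'})=0$, a sum that is absolutely convergent by \eqref{D-5}.

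The delicate point is the freeness statement \eqref{D-8}, since it involves the infinite partial sums $\phi_{(\ge m)}=\sum_{m'\ge m}A^{-\varepsilon(m'-m)}\phi_{m'}$. Here the key observations are: (i) every operator $W$ occurring in \eqref{D-8} applies at least one horizontal vector field, so only horizontal derivatives of $\phi_{(\ge m)}$ of order $\ge 1$ are ever used --- the $C^0$-part of $\phi_{(\ge m)}$, which need not converge or be bounded uniformly in $L$, is irrelevant; and (ii) since each $W$ of weighted order $d$ expands into a constant-coefficient combination of $d$-th order horizontal derivatives, \eqref{Finite-1} gives $\|W\phi^{(L)}_{m'}\|_{C^0}\lesssim_{C_0}A^{m'(1-d)}$, so $\sum_{m'\ge m}A^{-\varepsilon(m'-m)}W\phi^{(L)}_{m'}$ converges absolutely and uniformly, with a tail bound uniform in $L$. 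Hence $W\phi^{(L_t)}_{(\ge m)}\to W\phi_{(\ge m)}$ locally uniformly for each of the finitely many relevant $W$, and one passes to the limit in \eqref{Finite-6} to get the corresponding inequality for $\{\phi_m\}$. It then remains to rewrite \eqref{Finite-6} in the form \eqref{D-8}: for the first-derivative factors one uses the orthogonality \eqref{Finite-3}, which via an iterated Pythagorean identity gives $|X_i\phi_{(\ge m)}(p)|^2=\sum_{m'\ge m}A^{-2\varepsilon(m'-m)}|X_i\phi_{m'}(p)|^2$, bounding $|X_i\phi_{(\ge m)}|$ below by $|X_i\phi_m|\gtrsim_{C_0}1$ and above by $C_0(1-A^{-2\varepsilon})^{-1/2}$; for the higher-order factors one sums the geometric series of (ii) to bound $|W\phi_{(\ge m)}|$ for $W$ of weighted order $\ge 2$, and one compares the resulting product $\prod_W|W\phi_{(\ge m)}|$ with $\prod_{i=1}^k|X_i\phi_{(\ge m)}|$ by absorbing the discrepancy into the power of $A$ already present on the right-hand side of \eqref{Finite-6}.

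The step I expect to be the main obstacle is this last conversion: organizing the passage to the limit so as to be compatible with the infinite lacunary sums $\phi_{(\ge m)}$, and matching the powers of $A$ produced by summing geometric series in $m'$ against the factor $A^{-m\sum_{j\ge 2}(j-1)\binom{n+j-1}{j}}$ in \eqref{Finite-6}. For operators $W$ of weighted order $\ge 2$ one may additionally exploit that $W\phi_{(\ge m)}$ is dominated by its leading term $W\phi_m$ up to an $O_{C_0}(A^{-1})$ relative error, which allows transferring the quantitative independence of the higher derivatives from $\phi_m$ to $\phi_{(\ge m)}$. Everything else is the routine Arzel\`a--Ascoli packaging already foreshadowed in the introduction.
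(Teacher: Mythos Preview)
Your proposal is correct and follows essentially the same route as the paper: apply Proposition~\ref{FiniteIteration} on $[-M,M]$, use the uniform $C^{s^2+s+1}_{A^m}$ bounds together with Arzel\`a--Ascoli and a diagonal argument to extract a $C^{s^2+s}_{\mathrm{loc}}$-convergent subsequence, and verify that all the stated properties pass to the limit. The one place where the paper is terser is the conversion of \eqref{Finite-6} into the product form \eqref{D-8}: the paper does this \emph{before} passing to the limit, simply by invoking the $C^{s^2+s+1}_{A^m}$ bound on $\phi^M_m$ (which yields $|W\phi^M_{(\ge m)}|\lesssim_{C_0}A^{-m(d_W-1)}$ for each higher-order $W$) and absorbing the resulting product of $A$-powers into the constant, so your anticipated ``main obstacle'' is in fact a one-line bookkeeping step.
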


\begin{proof}
For each $M\in\bbn$, we can apply Proposition \ref{FiniteIteration} to find $\phi^M_m:G\to\bbr^{128\cdot 23^{n_h}}$ for $-M\le m\le M$ obeying the following bounds, where $\phi^M_{(\ge m)}:G\to\bbr^{128\cdot 23^{n_h}}$ is defined by
\[
\phi^M_{(\ge m)}\coloneqq \sum_{m\le m'\le M_2}A^{-\varepsilon(m'-m)}\phi^M_{m'}.
\]
\begin{enumerate}[leftmargin=*]
    \item (smoothness at scale $A^m$) For all $-M\le m\le M$, we have
    \begin{equation}\label{D-20}
        \|\phi^M_m\|_{C_{A^m}^{s^2+s+1}}\le C_0 A^m.
    \end{equation}
    \item (orthogonality) One has, for all $-M\le m\le M$,
    \begin{equation*}
        \sum_{m'>m}A^{-\varepsilon(m'-m)}B(\phi^M_m,\phi^M_{m'})=0.
    \end{equation*}
    \item (nondegeneracy) For all $p\in G$ and $-M\le m\le M$, we have the estimates
    \begin{equation*}
        |X\phi^M_m(p)|\ge C_0^{-1}
        \end{equation*}
and
        \begin{equation}\label{D-23}
            \left|\bigwedge_{\substack{W=X_{r_1,j_1}\cdots X_{r_i,j_i}  \\  1\le i\le s, (r_1,j_1)\preceq \cdots \preceq (r_{i},j_{i})}}W\phi^M_{(\ge m)}(p)\right|\ge C_0^{-2k^2-7k+3}A^{-m\sum_{j=2}^s (j-1)\binom{n+j-1}{j}}\prod_{i=1}^k|X_i\phi^M_{(\ge m)}(p)|.    
        \end{equation}

\end{enumerate}
From \eqref{D-20} and \eqref{D-23} we see that
\[
\left|\bigwedge_{\substack{W=X_{r_1,j_1}\cdots X_{r_i,j_i}  \\  1\le i\le s, (r_1,j_1)\preceq \cdots \preceq (r_{i},j_{i})}}W\phi^M_{(\ge m)}(p)\right|\ge C_0^{-2k^2-7k+3-\sum_{j=2}^s (j-1)\binom{n+j-1}{j}}\prod_{\substack{W=X_{r_1,j_1}\cdots X_{r_i,j_i}  \\  1\le i\le s, (r_1,j_1)\preceq \cdots \preceq (r_{i},j_{i})}}|W\phi^M_{(\ge m)}(p)|.    
\]
For each $m\in \bbz$, the sequence $\{\phi_m^M\}_{M\ge |m|}$ is bounded in the $C_{A^m}^{s^2+s+1}$ norm, so by the Arzel\`a-Ascoli theorem, one can find a subsequence of $\{M_k\}$ such that $\{\phi_m^{M_k}\}_{M_k\ge |m|}$ locally converges in the $C^{s^2+s}$ topology, say to $\phi_m$, for every $m\in \bbz$. It now readily follows that these $\phi_m$ satisfy the above properties.
\end{proof}

Once we have this lacunary family guaranteed by Theorem \ref{lacunary}, we can construct a function $\Phi_1:G\to \bbr^{128\cdot 23^{n_h}}$ as
\[
\Phi_1(p)\coloneqq \sum_{m=-\infty}^\infty A^{-\varepsilon m}(\phi_m(p)-\phi_m(0)).
\]
Then $\Phi_1$ is ``almost'' a bi-Lipschitz embedding of $(G,d_{G}^{1-\varepsilon})$ into $\bbr^{128\cdot 23^{n_h}}$:
\begin{proposition}\label{AlmostLipschitz}
The map $\Phi_1:G\to \bbr^{128\cdot 23^{n_h}}$ satisfies the following estimates.

\begin{enumerate}[leftmargin=*]
    \item (H\"older upper bound)
\begin{equation}\label{partial-ub}
|\Phi_1(p)-\Phi_1(p')|\lesssim_A \varepsilon^{-1/2} d_{G}(p,p')^{1-\varepsilon},\quad \mathrm{for~all~} p,p'\in G.    
\end{equation}
\item (partial H\"older lower bound) For $p,p'\in G$ so that $A^{n_0}A^{-1/(s+1)}\le d_G(p,p')\le 2A^{n_0}A^{-1/(s+1)}$ for some integer $n_0$, we have
\begin{equation}\label{partial-lb}
    |\Phi_1(p)-\Phi_1(p')|\gtrsim_A d_{G}(p,p')^{1-\varepsilon}.
\end{equation}
\end{enumerate}

\end{proposition}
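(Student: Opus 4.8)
The plan is to prove the two estimates separately: Part~(1) is the standard Weierstrass-sum argument made quantitative by the orthogonality, while Part~(2) reduces, by a dilation and a left-translation, to a single-scale Taylor-expansion estimate on $G$.

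For Part~(1), fix $p\neq p'$ and let $n\in\bbz$ satisfy $A^n\le d_G(p,p')<A^{n+1}$, so $A^{-\varepsilon n}\asymp_A d_G(p,p')^{-\varepsilon}$ and $A^{n(1-\varepsilon)}\asymp_A d_G(p,p')^{1-\varepsilon}$. Split $\Phi_1(p)-\Phi_1(p')$ into the low tail $\sum_{m\le n}A^{-\varepsilon m}(\phi_m(p)-\phi_m(p'))$ and the high tail $\Psi_n(p)-\Psi_n(p')$, where $\Psi_n\coloneqq\sum_{m>n}A^{-\varepsilon m}\phi_m$. For the low tail use the crude bound $|\phi_m(p)-\phi_m(p')|\le 2\|\phi_m\|_{C^0}\lesssim_{C_0}A^m$ from the scale-$A^m$ smoothness of Theorem~\ref{lacunary}; the resulting geometric series in $m$ has ratio $A^{1-\varepsilon}$, bounded away from $1$, hence sums to $O_{C_0}(A^{n(1-\varepsilon)})=O_A(d_G(p,p')^{1-\varepsilon})$. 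For the high tail bound it by $\|\nabla\Psi_n\|_{C^0}\,d_G(p,p')$, which is legitimate since horizontal gradients control differences along horizontal CC-paths. The key point is that reading the diagonal entries of the orthogonality relation $\sum_{m'>m}A^{-\varepsilon(m'-m)}B(\phi_m,\phi_{m'})=0$ gives $X_i\phi_m\cdot X_i\Psi_m=0$, so every cross term in the expansion of $|X_i\Psi_n|^2$ vanishes and $|\nabla\Psi_n|^2=\sum_{m>n}A^{-2\varepsilon m}|\nabla\phi_m|^2\lesssim_{C_0}\sum_{m>n}A^{-2\varepsilon m}\asymp A^{-2\varepsilon n}/(\varepsilon\log A)$, using $|\nabla\phi_m|\lesssim_{C_0}1$. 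Hence the high tail is $\lesssim_{C_0}\varepsilon^{-1/2}(\log A)^{-1/2}A^{-\varepsilon n}d_G(p,p')\asymp_A\varepsilon^{-1/2}d_G(p,p')^{1-\varepsilon}$, and adding the two tails proves \eqref{partial-ub}. (The same two bounds show the series defining $\Phi_1$ converges absolutely.)

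For Part~(2), applying the dilation $\delta_{A^{n_0}}$ together with the renormalizations $\tilde\phi_m\coloneqq A^{-n_0}\phi_{m+n_0}\circ\delta_{A^{n_0}}$, under which all the smoothness, orthogonality and freeness hypotheses of Theorem~\ref{lacunary} are preserved (the dilation factors cancel in the freeness ratios), and then left-translating by $p'$, we reduce to $n_0=0$, $p'=e_G$, so $d_G(p,e_G)\asymp A^{-1/(s+1)}$ and we must show $|\Phi_1(p)|\gtrsim_A d_G(p,e_G)^{1-\varepsilon}$. Write $\Phi_1(p)=\sum_{m<0}A^{-\varepsilon m}(\phi_m(p)-\phi_m(e_G))+(\Psi(p)-\Psi(e_G))$ with $\Psi\coloneqq\phi_{(\ge 0)}$. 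The first sum is $\lesssim_{C_0}A^{-(1-\varepsilon)}$ by the crude $C^0$ bound, which is negligible compared to $d_G(p,e_G)^s\asymp A^{-s/(s+1)}$ because $1-\varepsilon>s/(s+1)$ (as $\varepsilon<\tfrac1A$); so it suffices to show $|\Psi(p)-\Psi(e_G)|\gtrsim_{C_0,G}d_G(p,e_G)^s$, which gives \eqref{partial-lb} since the ratio $d_G(p,e_G)^{s-1+\varepsilon}\asymp_A A^{-(s-1)/(s+1)}$ is a fixed positive constant. Writing $p=\exp(\sum_{r,j}x_{r,j}X_{r,j})$, we have $d_G(p,e_G)\asymp_G\sum_{r,j}|x_{r,j}|^{1/r}$, hence $|x_{r,j}|\le d_G(p,e_G)^r$ for all $(r,j)$ while $\max_{r,j}|x_{r,j}|\gtrsim_G d_G(p,e_G)^s$. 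Now Taylor-expand $\Psi$ about $e_G$: after regrouping $\Psi(e_G)+\sum_{m=1}^s\frac1{m!}(\sum_{r,j}x_{r,j}X_{r,j})^m\Psi(e_G)$ using the Baker--Campbell--Hausdorff formula and the form \eqref{vf-form} of the $X_{r,i}$ (so that the iterated brackets, which lie in higher strata, get absorbed into a corrected linear term), one obtains $\Psi(p)-\Psi(e_G)=\sum_{r,j}a_{r,j}X_{r,j}\Psi(e_G)+E$, where $a_{r,j}=x_{r,j}+(\text{polynomial in lower coordinates of weighted degree }r)$ and $E$ collects the genuine cross-derivative terms and the Taylor remainder, each a product of at least two of the $x_{r,j}$ times a derivative $\nabla^{\ge 2}\Psi$. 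Since $\|\nabla^2\Psi\|_{C^{s^2+s-1,2/3}}\lesssim_{C_0}1$ uniformly in $\varepsilon$ (from \eqref{Finite-2}), each term of $E$ is $\lesssim_{C_0}(\max_{r,j}|x_{r,j}|)\,d_G(p,e_G)$ (keep one coordinate factor, extract an extra $d_G(p,e_G)$ from the remaining factors), so $|E|\ll\max_{r,j}|x_{r,j}|$ once $A$ is large; and since $x\mapsto a$ is a triangular unipotent polynomial change of variables, $\max_{r,j}|a_{r,j}|\gtrsim_G\max_{r,j}|x_{r,j}|\gtrsim_G d_G(p,e_G)^s$. Finally, the freeness property \eqref{D-8} of $\Psi$, together with the Cauchy--Schwarz inequality for wedge products applied to the normalized vectors $X_{r,j}\Psi(e_G)/|X_{r,j}\Psi(e_G)|$ (whose parallelepiped has volume $\gtrsim_{C_0}1$) and the individual lower bounds $|X_i\Psi(e_G)|\ge|X_i\phi_0(e_G)|\gtrsim_{C_0}1$ (Pythagoras from the orthogonality) and $|X_{r,j}\Psi(e_G)|=|X_{r,j}\phi_0(e_G)|+O_{C_0}(A^{-1})\gtrsim_{C_0}1$ for $r\ge2$ (the $m=0$ term dominates because $X_{r,j}\phi_m=O_{C_0}(A^{-m(r-1)})$, and $\phi_0$ inherits the freeness of $\phi^0$ from Proposition~\ref{FirstStep}), shows $|\sum_{r,j}a_{r,j}X_{r,j}\Psi(e_G)|\gtrsim_{C_0,G}\max_{r,j}|a_{r,j}|$. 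Combining, $|\Psi(p)-\Psi(e_G)|\gtrsim_{C_0,G}\max_{r,j}|a_{r,j}|-|E|\gtrsim_{C_0,G}d_G(p,e_G)^s$, completing Part~(2).

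The main obstacle is the Carnot-group Taylor expansion just described: organizing it so that the leading term is a linear combination of the $X_{r,j}\Psi(e_G)$ with corrected coefficients comparable to the $x_{r,j}$, while all remaining terms are products of at least two coordinates multiplied only by second- and higher-order derivatives of $\Psi$ — which are bounded uniformly in $\varepsilon$, unlike $\nabla\Psi$ which is only $O(\varepsilon^{-1/2})$ — and are hence negligible at the chosen scale. This is exactly where the general step-$s$ structure forces genuine work beyond the $s=2$ case of $\bbh^3$, and where the slightly sub-unit scale $A^{-1/(s+1)}$ is chosen precisely so as to simultaneously suppress these errors and the $m<0$ tail.
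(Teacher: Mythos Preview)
Your proof is correct and follows the same overall strategy as the paper: Part~(1) is the identical Weierstrass--splitting argument (crude $C^0$ bound for the small-scale tail, orthogonality plus the horizontal-curve Lipschitz estimate for the large-scale tail), and Part~(2) reduces by dilation and translation to a single-scale Taylor expansion at $e_G$ and then invokes the freeness of $\phi_{(\ge 0)}$.

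The only genuine difference is in how the Taylor expansion is organized in Part~(2). The paper keeps \emph{all} ordered terms $p_{(r_1,j_1),\ldots,(r_m,j_m)}X_{r_1,j_1}\cdots X_{r_m,j_m}\Psi(0)$ together as the ``main term'' and applies the full freeness \eqref{D-8} (over all $W$, not just $W=X_{r,j}$) to extract a single first-order coefficient $x_{r_0,j_0}+p_{r_0,j_0}$, which it then shows is large via a threshold argument on the smallest $r_0$. You instead separate off the first-order piece $\sum a_{r,j}X_{r,j}\Psi$ and throw the $m\ge 2$ cross-derivative terms into the error $E$, using that $\|\nabla^{\ge 2}\Psi\|$ is bounded \emph{uniformly in $\varepsilon$} to get $|E|\lesssim(\max|x_{r,j}|)d_G$; you then only need freeness of the $n$ first-stratum vectors $\{X_{r,j}\Psi\}$, and your triangular-unipotent observation replaces the paper's threshold step. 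Your route is somewhat cleaner and gives the transparent lower bound $\gtrsim d_G(p,e_G)^s$ directly.

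One small caution on justification: your lower bound $|X_{r,j}\Psi(e_G)|\gtrsim_{C_0}1$ for $r\ge 2$ is correct, but the reason you give --- that ``$\phi_0$ inherits the freeness of $\phi^0$ from Proposition~\ref{FirstStep}'' --- does not survive the Arzel\`a--Ascoli limit in Theorem~\ref{lacunary}, where $\phi_0$ is only known to satisfy the listed bounds. The cleaner derivation is directly from the stated properties of $\Psi=\phi_{(\ge 0)}$: the absolute wedge-product lower bound (the limit of \eqref{Finite-6} at $m=0$) gives $|\bigwedge_W W\Psi|\gtrsim_{C_0}\prod_i|X_i\Psi|$, and dividing out the horizontal factors via Cauchy--Schwarz yields $\prod_{\deg W\ge 2}|W\Psi|\gtrsim_{C_0}1$; since each such factor is $\lesssim_{C_0}1$ by \eqref{Finite-2}, each is individually $\asymp_{C_0}1$.
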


\begin{proof}
\begin{enumerate}[leftmargin=*]
    \item Let $p,p'\in G$. By translating and rescaling, we may assume $p'=0$ and $A^{-1}\le d_G(p,0)\le 1$. We introduce the low-frequency component
    \[
    \Psi(q)\coloneqq \sum_{m=0}^\infty A^{-\varepsilon m}(\phi_m(q)-\phi_m(0)),\quad q\in G.
    \]
    Then 
    \begin{equation}\label{lowfreq-approx}
    \Phi_1(q)=\Psi(q)+O_{C_0}(A^{-1}),
    \end{equation}
    so it will be enough to show
    \[
    |\Psi(p)|\lesssim_{A} \varepsilon^{-1/2}.
    \]
    As $d_G(p,0)\le 1$, there exists a horizontal curve $\gamma$ in $G$ from $0$ to $p$ of length $\le 1$. Therefore, we have
    \[
    |\Psi(p)|=|\Psi(p)-\Psi(0)|\le 1\cdot \||\nabla \Psi|\|_{L^\infty(G)},
    \]
    But by the orthogonality statement of Theorem \ref{lacunary}, we have
    \[
    |X_i \Psi|=|\sum_{m=0}^\infty A^{-\varepsilon m}X_i\phi_m|=\left(\sum_{m=0}^\infty A^{-2\varepsilon m}|X_i\phi_m|^2\right)^{1/2}\asymp_{C_0}M,\quad i=1,\cdots, k,
    \]
    where
    \[
    M\coloneqq \left(\sum_{n=0}^\infty A^{-2\varepsilon n}\right)^{1/2}=\left(\frac{1}{1-A^{-2\varepsilon }}\right)^{1/2}\asymp \frac{1}{\sqrt{\varepsilon \log A}}\lesssim_A  \varepsilon^{-1/2},
    \]
    so we conclude
    \begin{equation}\label{A_hierarchy-6}
        |\Psi(p)|\le \|\nabla \Psi\|_{L^\infty \ell^2}\lesssim_{C_0} M\lesssim_{A} \varepsilon^{-1/2},
    \end{equation}
    as desired (we just used the hierarchy of choosing $A$ after $C_0$).
    
    \item 
    
    Let $p,p'\in G$ be so that $A^{n_0}A^{-1/(s+1)}\le d_G(p,p')\le 2A^{n_0}A^{-1/(s+1)}$ for some integer $n_0$. Again, by translating and rescaling, we may assume $p'=0$ and $n_0=0$, i.e. $A^{-1/(s+1)}\le d_G(p,0)\le 2A^{-1/(s+1)}$. Writing $p=\exp\left(\sum_{r=1}^s\sum_{i=1}^{k_r}x_{r,i}X_{r,i}\right)$, we have
    \[
    \sum_{r=1}^s\sum_{j=1}^{k_r}|x_{r,j}|^{1/r}\asymp_G d_G(p,0)\asymp A^{-1/(s+1)}.
    \]
    Equivalently, we have $|x_{r,j}|\lesssim_G A^{-r/(s+1)}$ for all $r,j$, and there exists some pair $(r,j)$ such that $|x_{r,j}|\asymp_G A^{-r/(s+1)}$.
    
    By Taylor expansion
    \[
    \Psi(p)=\Psi(0)+\sum_{m=1}^s \frac{1}{m!}\left(\sum_{r=1}^s\sum_{j=1}^{k_r}x_{r,j}X_{r,j}\right)^m\Psi(0)+O_{C_0}(A^{- 1})
    \]
    (note that this is where we used the $C^{s(s+1)}$-regularity of $\Psi$), and in light of \eqref{lowfreq-approx} and $\Psi(0)=0$, we have
    \[
    \Phi_1(p)=\sum_{m=1}^s \frac{1}{m!}\left(\sum_{r=1}^s\sum_{j=1}^{k_r}x_{r,j}X_{r,j}\right)^m\Psi(0)+O_{C_0}(A^{- 1}).
    \]
    This expression contains many terms of the form $X_{r_1,j_1}\cdots X_{r_m,j_m}\Psi$, where the $(r_1,j_1),$ $\cdots,$ $(r_m,j_m)$ are arbitrary with $m\le s$ and are often unordered. In order to use the freeness property \eqref{D-8}, it would be necessary to modify the above Taylor expansion formula so that the only differential operators acting on $\Psi$ are the ones of the form $X_{r_1,j_1}\cdots X_{r_m,j_m}$ where $(r_1,j_1)\preceq \cdots \preceq (r_{m},j_{m})$.
    
    This modification is possible once we note that for any permutation $\pi$ of $\{1,\cdots,m\}$ we can express $X_{r_1,j_1}\cdots X_{r_m,j_m}-X_{r_{\pi(1)},j_{\pi(1)}}\cdots X_{r_{\pi(m)},j_{\pi(m)}}$ as a linear combination of differential operators $X_{r'_1,j'_1}\cdots X_{r'_{m'},j'_{m'}}$ where $\sum_{i=1}^{m'}r'_i=\sum_{i=1}^m r_i$ and $(r'_1,j'_1)\preceq \cdots \preceq (r'_{m'},j'_{m'})$. This can be proven by a simple induction argument on $m$ using the fact that $[X_{r,j},X_{r',j'}]\in V_{r+r'}$ is a linear combination of $X_{r+r',i}$ for $i=1,\cdots,k_{r+r'}$ if $r+r'\le s$, and $[X_{r,j},X_{r',j'}]=0$ if $r+r'>s$. Applying this fact to the above Taylor expansion formula and keeping track of the degrees, we obtain the following modified Taylor expansion formula:
    \begin{align}\label{modified-Taylor}
    \begin{aligned}
    \Psi(p)=&\sum_{r=1}^s\sum_{j=1}^{k_r}(x_{r,j}+p_{r,j})X_{r,j}\Psi(0)\\
    &\qquad\qquad+ \sum_{m=2}^s\sum_{(r_1,j_1)\preceq \cdots \preceq (r_{m},j_{m})}p_{(r_1,j_1), \cdots , (r_{m},j_{m})}X_{r_1,j_1}\cdots X_{r_m,j_m}\Psi(0)+O_{C_0}(A^{- 1})
    \end{aligned}
    \end{align}
    where $p_{r,j}$ is a homogeneous polynomial of degree $r$ where each monomial is a product of at least two terms, each of the form $x_{r',j'}$ with $r'<r$ (recall that we define the homogeneous degree by assigning weight $r$ to $x_{r,j}$), and $p_{(r_1,j_1), \cdots , (r_{m},j_{m})}$ is a homogeneous polynomial of degree $\sum_{i=1}^m r_i$.
    
    By the freeness property \eqref{D-8}, each term in \eqref{modified-Taylor} (except for the error term) may serve as a lower bound for the entire sum, up to multiplicative constants. Thus, it suffices to show that there exists $r,j$ such that $x_{r,j}+p_{r,j}$ has non-negligible size.
    
        More precisely, the freeness property \eqref{D-8} tells us that
    \[
        \left|\bigwedge_{\substack{W=X_{r_1,j_1}\cdots X_{r_m,j_m}  \\  1\le m\le s, (r_1,j_1)\preceq \cdots \preceq (r_{m},j_{m})}}W\Psi(p)\right|\asymp_{C_0}  \prod_{\substack{W=X_{r_1,j_1}\cdots X_{r_m,j_m}  \\  1\le m\le s, (r_1,j_1)\preceq \cdots \preceq (r_{m},j_{m})}}\left|W\Psi(p)\right|,
    \]
    which immediately gives us, for each $(r_0,j_0)$, the following control on the main term of the Taylor expansion:
    \begin{align*}
    \left|\sum_{r=1}^s\sum_{j=1}^{k_r}(x_{r,j}+p_{r,j})X_{r,j}\Psi(0)+ \sum_{m=2}^s\sum_{(r_1,j_1)\preceq \cdots \preceq (r_{m},j_{m})}p_{(r_1,j_1), \cdots , (r_{m},j_{m})}X_{r_1,j_1}\cdots X_{r_m,j_m}\Psi(0)\right|\qquad\quad&\\
    \gtrsim_{C_0} \left|(x_{r_0,j_0}+p_{r_0,j_0})X_{r_0,j_0}\Psi(0)\right|.&
    \end{align*}
    So it remains to single out a pair $(r_0,j_0)$ such that the right-hand side is large enough. Indeed, recall that we have $|x_{r,j}|\lesssim_G A^{-r/(s+1)}$ for all $r,j$, and $|x_{r,j}|\asymp_G A^{-r/(s+1)}$ for some pair $(r,j)$. Therefore, there exists some $(r_0,j_0)$ such that
    \begin{equation}\label{A_hierarchy-7}
        |x_{r,j}|< A^{-r/(s+1)-0.5} \mbox{ for all } r<r_0,1\le j\le k_r,\quad |x_{r_0,j_0}|\ge A^{-r_0/(s+1)-0.5}
    \end{equation}
    (we have just used the hierarchy that $A$ is chosen after $G$). Then, as $p_{r_0,j_0}$ is homogeneous of degree $r_0$ and consist of monomials which are the product of at least two terms, we must have
    \begin{equation}\label{A_hierarchy-8}
        |p_{r_0,j_0}|\lesssim_G A^{-r_0/(s+1)-1}, \quad \mbox{or}\quad |p_{r_0,j_0}|\le \frac 12 A^{-r_0/(s+1)-0.5}
    \end{equation}
    (we have again just used the hierarchy that $A$ is chosen after $G$).
    Hence,
    \[
    |x_{r_0,j_0}+p_{r_0,j_0}|\ge |x_{r_0,j_0}|-|p_{r_0,j_0}|\ge \frac 12 A^{-r_0/(s+1)-0.5},
    \]
    and we see that $\left|(x_{r_0,j_0}+p_{r_0,j_0})X_{r_0,j_0}\Psi(0)\right|\gtrsim_{C_0}A^{-r_0/(s+1)-0.5}$. In conclusion, we have $|\Psi(p)|\gtrsim_{C_0}A^{-r_0/(s+1)-0.5}$, or 
    \begin{equation}\label{A_hierarchy-9}
        |\Psi(p)|\gtrsim_{A}1
    \end{equation}
    (we just used the hierarchy of choosing $A$ after $C_0$).
\end{enumerate}
\end{proof}

Now, with Proposition \ref{AlmostLipschitz} in hand, we are ready to prove the main theorem.

\begin{proof}
Recall that the map $\Phi_1:G\to \bbr^{128\cdot 23^{n_h}}$ satisfies the H\"older upper bound \eqref{partial-ub} and the partial H\"older lower bound \eqref{partial-lb}, and that $A$ is a dyadic number, say $A=2^a$, $a\in \mathbb{N}$. By precomposing $\Phi_1$ with the scaling maps $\delta_{2^{-m+1}}$ and rescaling by $2^{(m-1)(1-\varepsilon)}$, $m=1,\cdots,a$, we obtain mappings $\Phi_m=2^{(m-1)(1-\varepsilon)}\Phi_1\circ \delta_{2^{-m+1}}:G\to \bbr^{128\cdot 23^{n_h}}$ that satisfy the same H\"older upper bound \eqref{partial-ub}:
\begin{equation}\label{partial-ub-full}
|\Phi_m(p)-\Phi_m(p')|\lesssim_A \varepsilon^{-1/2} d_{G}(p,p')^{1-\varepsilon},\quad \mathrm{for~all~} p,p'\in G,  
\end{equation}
and a different partial H\"older lower bound \eqref{partial-lb}:
\begin{equation}\label{partial-lb-full}
    |\Phi_m(p)-\Phi_m(p')|\gtrsim_A d_{G}(p,p')^{1-\varepsilon},
\end{equation}
for $p,p'\in G$ so that $2^{m-1} A^{n_0}A^{-1/(s+1)}\le d_G(p,p')\le 2^m A^{n_0}A^{-1/(s+1)}$ for some integer $n_0$.

We now obtain the full embedding $\Phi:G\to \bbr^{128\cdot 23^{n_h}\cdot a}$ by directly concatenating the mappings $\Phi_1,\cdots,\Phi_a$:
\begin{equation}\label{fullmapping}
    \Phi(p)\coloneqq \Big(\Phi_m(p)\Big)_{m=1}^{a},\quad p\in G.
\end{equation}
It remains to observe
\[
d_G(p,p')^{1-\varepsilon}\lesssim_A |\phi(p)-\phi(p')| \lesssim_A \varepsilon^{-1/2} d_G(p,p')^{1-\varepsilon},\quad p,p'\in G.
\]
The upper bound follows from \eqref{partial-ub-full}, and the lower bound follows by observing that for any given pair $p,p'\in G$, \eqref{partial-lb-full} applies to at least one coordinate.
\end{proof}

\bibliographystyle{myalpha}
\bibliography{bib}

\end{document}